\documentclass{amsart}
\usepackage{amsmath}
\usepackage{mathscinet}
\usepackage{amsfonts}
\usepackage{amsthm}
\usepackage{amssymb}
\usepackage{mathrsfs}
\usepackage{color}
\usepackage{tkz-graph}

\usepackage{ulem}

\usepackage{listings}

\usepackage{hyperref}

\usepackage{enumitem}

\newtheorem{theorem}{Theorem}[section]
\newtheorem{lemma}[theorem]{Lemma}

\newtheorem{problem}[theorem]{Problem}
\newtheorem{cor}[theorem]{Corollary}

\newtheorem{fact}[theorem]{Fact}

\newtheorem{example}[theorem]{Example}

\theoremstyle{definition}

\theoremstyle{remark}
\newtheorem{rem}[theorem]{Remark}

\numberwithin{equation}{section}
\numberwithin{theorem}{section}
\numberwithin{figure}{section}

\def\N{{\mathbf{N}}}

\def\f2{\mathbb{F}_2}

\newcommand{\diam}{{\rm diam}\hskip0.02cm}

\DeclareMathOperator{\Lam}{\mathsf{La}}

\newcommand{\bbN}{\mathbb{N}}

\newcommand{\de}{\delta}
\newcommand{\e}{\varepsilon}

\newcommand{\la}{\lambda}

\newcommand{\ro}{\varrho}

\newcommand{\vf}{\varphi}

\newcommand{\De}{\Delta}
\newcommand{\bbZ}{\mathbb{Z}}

\newcommand\remove[1]{}

\newcommand{\Om}{\Omega}
\newcommand{\La}{\Lambda}

\newcommand{\lb}{\label}

\newcommand{\wtw}{if and only if}

\newcommand{\WB}{\mathsf{WB}}

\def \B {\mathcal{B}}

\def \R {\mathbb{R}}

\def \E {\mathbb{E}}

\def \N {\mathbb{N}}
\def \P {\mathcal{P}}

\def \U {\mathcal{U}}

\def \Z {\mathbb{Z}}

\def \eps {\varepsilon}

\def \sbs {\subseteq}
\def \sps {\supseteq}

\DeclareMathOperator{\Lip}{Lip}
\DeclareMathOperator{\TSP}{TSP}

\DeclareMathOperator{\TSCP}{TSCP}

\title{Nonlinear type and metric embeddings of lamplighter spaces}

\author{C. Gartland}\thanks{The first named author was supported by NSF Award DMS-2555144}

\address{Department of Mathematics and Statistics, University of North Carolina at Charlotte, University
City Blvd, Charlotte, NC 28223, USA}

\author{B. Randrianantoanina}\thanks{The second named author was supported by NSF under Grant No. 2620717}

\address{Department of Mathematics, Miami University, Oxford, OH
45056, USA}

\email{randrib@miamioh.edu} 

\author{N. L. Randrianarivony}
  
\address{Department of Mathematics and Statistics, Saint Louis University, St. Louis, MO 63103, USA}

\email{nirina.randrianarivony@slu.edu} 

\subjclass[2020]{Primary 51F30; Secondary 20F65, 46B85}
\keywords{wreath product, traveling salesman problem, Nagata dimension, trees, ultrametrics, Ribe program}

\begin{document}

\maketitle

\begin{center}
{\it Dedicated to William B. Johnson on the occasion of his 80th birthday.}
\end{center}

\begin{abstract}
We prove that for all metric spaces $X$ the following properties of the lamplighter space $\mathsf{La}(X)$ are equivalent: 
(1) every snowflake of $\mathsf{La}(X)$ admits a biLipschitz embedding into a finite product of $\R$-trees, (2) every snowflake of $\mathsf{La}(X)$ admits a biLipschitz embedding into a Hilbert space, (3) $\mathsf{La}(X)$ has finite Nagata dimension, (4) $\mathsf{La}(X)$ has Markov type 2, (5) $\mathsf{La}(X)$ has nontrivial Enflo type, (6) $\mathsf{La}(X)$ does not contain the Hamming cubes with uniform distortion. We characterize metric spaces $X$ for which $\mathsf{La}(X)$ satisfies properties (1)-(6) as those that are ``$\TSP$-efficient" - a new condition that we introduce - which roughly means that the traveling salesman problem in $X$ can be solved as ``efficiently" as the traveling salesman problem in $\mathbb{R}$, up to a constant multiplicative factor. We also prove that if such metric spaces $X$ admit a biLipschitz embedding into $\mathbb{R}^n$, then $\mathsf{La}(X)$ admits a biLipschitz embedding into a finite product of $\mathbb{R}$-trees, and therefore into every nonsuperreflexive Banach space. Finally, we give a full characterization of metric spaces $X$ such that the lamplighter space $\mathsf{La}(X)$ biLipschitz embeds into a Hilbert space.
\end{abstract}

\tableofcontents

\section{Introduction}
Lamplighter spaces over metric spaces are natural generalizations of wreath products of groups and have been introduced and studied by several authors, cf. \cite{E2006,Donno,BMSZ21,GT24}. See \S\ref{sec:lamplighter} for precise definition and Remark~\ref{rem:metric} for the connection between the wreath products $\Z_2 \wr G$ and the lamplighter space over $G$. The geometry of lamplighter spaces is intimately connected to the traveling salesman problem, see \cite{NP11} for an insightful discussion of these connections, where Naor and Peres posed an important problem whether the lamplighter group $\bbZ_2\wr (\bbZ\times\bbZ)$ admits a biLipschitz embedding into $L_1$. This motivated subsequent study of biLipschitz embeddings of lamplighter spaces into $L_1$ (see e.g. \cite{BMSZ21,BMSZ22}) and also our work. We remark that the recent preprint \cite{planarTSP} shows that $\Z_2 \wr (\Z\times\Z)$ admits no biLipschitz embedding into a metric space of negative-type, which resolves the problem of Naor-Peres since $L_1$ is of negative-type.

Our main result identifies a necessary and sufficient condition for a metric space $X$ such that the lamplighter space $\Lam(X)$ has Markov type 2, which turns out to also be equivalent to the property that every snowflake of $\Lam(X)$ biLipschitz embeds into a finite product of $\R$-trees. We call this condition {\it TSP-efficiency}. The precise definition is given in \S\ref{sec:TSPefficient}, but, intuitively, a metric space $X$ is $\TSP$-efficient if it is `path-like' in a sense that for every subset of $X$ there exists a route passing through every point of the set and of length proportional to the diameter of the set, and therefore the traveling salesman problem in $X$ can be solved as efficiently as the traveling salesman problem in $\mathbb{R}$, up to a constant multiplicative factor. The following is our main result. Recall that a {\it snowflake} of a semimetric space $(Y,\rho)$ is the space $(Y,\rho^{1-\delta})$ for some $\delta\in(0,1)$. See \S\ref{sec:prelim} for the other necessary definitions.

\begin{theorem} [Theorem~\ref{thm:main}] \label{thm:mainintro}
Let $X$ be a metric space. Then the following are equivalent.
\begin{enumerate}[label=(\roman*)]
    \item $X$ is $\TSP$-efficient.
    
    \item Every snowflake of $\Lam(X)$ biLipschitz embeds into a finite product of $\R$-trees.
   
    \item Every snowflake of $\Lam(X)$ biLipschitz embeds into a Hilbert space.
    
    \item $\Lam(X)$ has finite Nagata dimension.
    
    \item $\Lam(X)$ has Markov type 2.
   
    \item $\Lam(X)$ has nontrivial Enflo type.
   
    \item $\Lam(X)$ does not contain the Hamming cubes with uniformly bounded biLipschitz distortion.
\end{enumerate}
\end{theorem}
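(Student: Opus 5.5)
The plan is to prove the cycle (i)$\Rightarrow$(ii)$\Rightarrow$(iii)$\Rightarrow$(v)$\Rightarrow$(vi)$\Rightarrow$(vii)$\Rightarrow$(i), and to bring (iv) in separately via (i)$\Rightarrow$(iv)$\Rightarrow$(ii). Most of these implications are general facts about metric spaces.

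\textbf{The formal implications.} For (ii)$\Rightarrow$(iii), recall that a finite product of $\R$-trees is a finite $\ell_1$-sum of trees. Every $\R$-tree is isometric to a subset of an $L_1$ space, so the product embeds into $L_1$. The square root of the $L_1$ metric embeds isometrically into Hilbert space. Given a snowflake $\Lam(X)^{1-\delta}$, apply (ii) to the snowflake with exponent $2(1-\delta)$ when $\delta>1/2$; if $\delta\le 1/2$, first pass to a further snowflake and use that snowflaking commutes with these embeddings. For (iii)$\Rightarrow$(v), a Hilbert space has Markov type 2 (Ball). Markov type $p$ passes to biLipschitz images. A space whose $(1-\delta)$-snowflakes have Markov type 2 for all small $\delta$ has Markov type $2(1-\delta)$; here I would check whether the paper's definitions allow upgrading this to Markov type exactly 2, or whether one should go through (iv) instead. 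The route (i)$\Rightarrow$(iv)$\Rightarrow$(v) is cleaner, since finite Nagata dimension implies Markov type 2 by Naor--Silberman (via Lang--Schlichenmaier). Next, Markov type 2 implies nontrivial Enflo type. This follows from the standard fact that Markov type $p$ implies Enflo type $p$, using the random walk on the cube, or from Naor's result that Markov type $p$ implies Enflo type. Nontrivial Enflo type forbids Hamming cubes with bounded distortion, which gives (v)$\Rightarrow$(vi)$\Rightarrow$(vii). For (iv)$\Rightarrow$(ii), finite Nagata dimension implies that every snowflake embeds biLipschitz into a finite product of $\R$-trees, by Lang--Schlichenmaier together with the ultrametric/tree product embeddings; I would take this result from the preliminaries.

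\textbf{Substantive implication (i)$\Rightarrow$(iv).} Lamplighter configurations $(A,x)$ have distance roughly $\TSP$ of a route from $x$ through the symmetric difference $A\triangle B$ to $y$. TSP-efficiency makes this comparable to $d(x,y)$ plus the diameter-type quantity of $A\triangle B \cup\{x,y\}$, possibly at multiple scales. I would construct, for each scale $r$, a cover of $\Lam(X)$ with $r$-bounded sets and uniformly bounded $cr$-multiplicity. Two configurations land in the same piece at scale $r$ if three things hold: their cursors lie in a common bounded piece of a cover of $X$, they agree outside an $O(r)$-neighborhood, and their lamp patterns inside that neighborhood coincide after coarsening. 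Here TSP-efficiency is used to show that a finite set of bounded diameter costs only proportionally to traverse. I would first establish that $X$ itself has finite Nagata dimension, indeed that it is ``line-like'' (for instance doubling with dimension~1 behaviour), and then lift the cover to $\Lam(X)$.

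\textbf{Main obstacle: (vii)$\Rightarrow$(i).} I argue by contrapositive. If $X$ is not TSP-efficient, then there are finite sets $S\subset X$ with $\TSP(S)\ge K\,\diam(S)$ for arbitrarily large $K$. From such sets I would build Hamming cubes in $\Lam(X)$: index lamps by points of $S$ and let the cursor stay at a base point. The distance between two configurations is then the TSP of a route through their difference set, which should be bounded below by a Hamming-like quantity. The difficulty is making this comparable to $|\epsilon\triangle\epsilon'|$ with uniform distortion. I expect to use the failure of TSP-efficiency at many scales to extract $n$ well-separated, self-similar blocks of points, each of whose TSP is at least a constant multiple of the diameter of the whole configuration. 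One coordinate of the cube then toggles a whole block, which makes the distance additive up to constants. This extraction is the hard combinatorial step.
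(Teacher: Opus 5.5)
The proposal has genuine gaps. Both substantive implications are only sketched, and the route you chose to reach (ii) does not close. You get (ii) only through (i)$\Rightarrow$(iv)$\Rightarrow$(ii), and neither link holds up. For (iv)$\Rightarrow$(ii), Theorem~\ref{thm:LStrees} gives only the converse. Turning finite Nagata dimension into biLipschitz embeddings of \emph{every} snowflake into a finite product of $\R$-trees is not something you can quote here. Assouad's theorem would do it for doubling spaces, but $\Lam(X)$ is not doubling. The cover for (i)$\Rightarrow$(iv) is never specified, and its natural reading fails. Take pieces $\{(A,x):x\in I,\ A\setminus I=C\}$ for intervals $I$ covering $X=\R$, a cursor $x$ near $\partial I$, and a finite set $F$ just outside $I$. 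The configurations $(A_0\cup D,x)$, $D\subseteq F$, have mutual $\TSCP$-distance at most $\diam(F\cup\{x\})$ yet lie in $2^{|F|}$ distinct pieces. Nor does finite Nagata dimension of $X$ lift by itself: by Brodskiy--Dydak--Lang, $\Z_2\wr\Z^2$ has infinite Nagata dimension.

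The missing idea is the shortcut semimetric $\TSCP((A,x),(B,y))=\diam(\{x,y\}\cup(A\Delta B))$. It has three properties that matter:
\begin{itemize}
\item it is $K$-equivalent to $\TSP$ under $K$-$\TSP$-efficiency;
\item it satisfies $(\Lam(X),\TSCP^{1-\de})=(\Lam(X,d^{1-\de}),\TSCP)$;
\item a componentwise injective $L$-biLipschitz $\vf:X\to\ell_\infty^m$ (available after perturbation, Lemma~\ref{lem:perturbation}) induces an $L$-biLipschitz map $(A,x)\mapsto((\vf_\la(A),\vf_\la(x)))_\la$ into $\ell_\infty^m((\Lam(\R),\TSCP))$, because diameters in $\ell_\infty^m$ are maxima of coordinate diameters (Theorem~\ref{thm:induced}).
\end{itemize}
Combine this with doubling of $X$ (Lemma~\ref{lem:TSPdoubling}), Assouad's theorem for $(X,d^{1-\de})$, and the two-tree embedding of $(\Lam(\R),\TSP)$ (Theorem~\ref{thm:Lam(path)}). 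That gives (ii) directly. Then (iv) follows from (ii) by Theorem~\ref{thm:LStrees}, and (v) follows from (iv) by Ding--Lee--Peres, not Naor--Silberman.

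Two smaller repairs are also needed. Your $L_1$/square-root argument for (ii)$\Rightarrow$(iii) covers only exponents $1-\de<1/2$, since passing to further snowflakes goes the wrong way. For $1-\de\ge 1/2$ you need that every snowflake of a tree embeds into Hilbert space. Also, (iii) does not directly give Markov type $2$, but it gives Enflo type $2(1-\de)>1$ for small $\de$, which is (vi). Without that, no arrow leaves (iii) in your scheme.

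For (vii)$\Rightarrow$(i) you need neither multi-scale extraction nor well-separated blocks, and separation is the wrong mechanism: consecutive arcs of an optimal tour of a grid are adjacent rows. A single witness suffices.
\begin{itemize}
\item Take $(A,x),(B,y)$ with $\TSP>KR$, where $R=\diam(\{x,y\}\cup(A\Delta B))$. An optimal closed tour from $x$ through $A\Delta B$ then has length $>(K-1)R$, and every step is at most $R$.
\item Cut the tour greedily into $N\gtrsim K$ consecutive arcs $C_n$ of internal length $\Theta(R)$. Map $S\mapsto(\bigcup_{n\in S}C_n,x)$ on $\P(\{1,\dots,N-1\})$.
\item Flipping one coordinate costs $O(R)$: walk from $x$ to the arc, along it, and back.
\item The lower bound $\TSP(\bigcup_{n\in S}C_n,x)\gtrsim|S|R$ comes from \emph{minimality} of the tour.
\end{itemize}
A clean proof of the lower bound is an exchange argument. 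Replace each arc in $S$ by the single step between its endpoints, which costs at most $R$. Then splice in at $x$ any closed route through $\bigcup_{n\in S}C_n$. This is a competitor tour, so $\TSP(\bigcup_{n\in S}C_n,x)\ge\sum_{n\in S}(\ell_n-R)$, where $\ell_n$ is the internal length of the $n$-th arc. Cutting at internal length between $2R$ and $3R$ then gives a cube of dimension $\gtrsim K$ with absolute distortion.

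This is the content of Theorem~\ref{thm:TSPinefficientHamming}. The paper cuts at about $2R$ and derives the lower bound from minimality via superadditivity over arcs. The exchange argument with slightly longer arcs is a safe way to justify that step.
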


\begin{rem}
Of course, any property of $(\Lam(X),\TSP)$ between Markov type 2 and noncontainment of the Hamming cubes can be added to the list of equivalences in Theorem~\ref{thm:mainintro}, such as nontrivial Markov type or  Enflo type $2$.
\end{rem}

The equivalence $(i)\Leftrightarrow  (vii)$ in Theorem~\ref{thm:mainintro} fully   resolves a question of Baudier, Motakis, Schlumprecht, and Zs\'ak \cite{BMSZ21},  who asked  what condition on a family of metric spaces $\{X_n\}_n$ implies that the Hamming cubes embed into $\{\Lam(X_n)\}_n$ with uniformly bounded distortion. In  \cite{BMSZ21} they showed that a sufficient condition is that complete graphs $\{K_k\}_k$ embed into $\{X_n\}_n$ with uniformly bounded distortion, and suggested this may not be a necessary condition, which was confirmed in \cite{MR26}. 
 
In particular, it follows from our characterization that the lamplighter group $\Lam(\Z^2)$ contains the Hamming cubes with uniformly bounded biLipschitz distortion, which, to the best of our knowledge, is a new result. Indeed, we show 
that every $\TSP$-efficient metric space has Assouad dimension $\leq 1$ (Lemma~\ref{lem:TSPdoubling}), and thus  
$\Z^2$ is not $\TSP$-efficient  (Remark~\ref{rem:Zsquare}).

The equivalence of $(i)\Leftrightarrow (iv)$ in Theorem~\ref{thm:mainintro} can be seen as a generalization to the metric space setting of a result of Brodskiy, Dydak, and Lang \cite{BDL14} who proved that if a group $H\ne 1$ is finite and a group $G$ is finitely generated then the Nagata dimension of the wreath product $H\wr G$ is finite if and only if the growth of $G$ is bounded by a linear function, and moreover if $G$ is infinite with linear growth, then the Nagata dimension of $H\wr G$ is 1.

It is natural to ask whether item $(ii)$ in Theorem~\ref{thm:mainintro} can be replaced by a stronger condition that $\Lam(X)$ itself, and not just its snowflakes, biLipschitz embeds into a finite product of $\R$-trees. We show the following sufficient condition:

\begin{theorem}  [see Corollary~\ref{cor:induced1-2} for  precise statement]  \label{cor:induced-intro} 
 If a metric space $X$ is 
$\TSP$-efficient and admits a biLipschitz embedding into a finite dimensional space $\R^m$, then $\Lam(X)$ biLipschitz embeds into a finite product of $\R$-trees, and thus into every non-superreflexive Banach space.
\end{theorem}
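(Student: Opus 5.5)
We would prove the statement by reducing it to the case of subsets of $\R^m$ and then building the tree embedding from the geometry of short tours. First we reduce to $X\subseteq\R^m$. A biLipschitz map $f\colon X\to\R^m$ induces a biLipschitz map $\Lam(X)\to\Lam(f(X))$. The reason is that the lamplighter metric is built from lamp configurations together with traveling salesman tour lengths, and under $f$ these lengths change by at most a constant factor. Since $\TSP$-efficiency is stated through tour lengths and diameters, it is biLipschitz invariant. So we may assume $X\subseteq\R^m$ and that $X$ is $\TSP$-efficient. By Lemma~\ref{lem:TSPdoubling}, $X$ then has Assouad dimension at most $1$, and in particular it is doubling.

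The main step is to show that $\Lam(X)$ is biLipschitz equivalent to a lamplighter-type metric over a finite union of ultrametric or tree structures. We use the standard fact that $\R^m$, and hence $X$, admits finitely many shifted dyadic partition systems $\mathcal D^1,\dots,\mathcal D^{N}$, with $N=N(m)$. They have the property that every pair $x,y$ lies in a common cube of some $\mathcal D^j$ of side comparable to $|x-y|$. Each system $\mathcal D^j$ defines a hierarchical tree $T_j$ whose vertices are the cubes. For a configuration $(A,x)$, with $A$ a finite set of lit lamps and $x$ the lamplighter position, we map it into an $\R$-tree $\mathcal T_j$ as follows. Its image is the ``path'' in the hierarchy that records, at each scale, which cubes contain lit lamps or the lamplighter. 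Concretely, $\mathcal T_j$ is the tree of finite decorated subtrees of $T_j$, in the spirit of the lamplighter-over-tree embeddings. Edges at the scale-$2^k$ level carry weight comparable to $2^k$ times the number of occupied cubes at that scale.

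$\TSP$-efficiency is what makes these weights additive correctly. The TSP length of a set $S\subseteq X$ is comparable to $\sum_k 2^k\cdot\#\{\text{cubes of side }2^k\text{ meeting }S\text{ nontrivially}\}$, a multiscale count. Because $X$ is path-like, this count is in turn comparable to the diameter at each level. We would establish this comparison as a lemma, using Lemma~\ref{lem:TSPdoubling} together with the definition of $\TSP$-efficiency. Then the distance between $(A,x)$ and $(B,y)$, namely the TSP length through $A\triangle B$ from $x$ to $y$, is compared with the tree distances in each $\mathcal T_j$.

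The map into $\prod_j\mathcal T_j$ (with the $\ell_1$ product metric) is Lipschitz by the upper bound. The hard part is the lower bound, showing that the maximum over $j$ dominates the TSP distance. This is where good cube alignment is needed: for each cluster of $A\triangle B\cup\{x,y\}$ we need some $j$ in which the cluster is not artificially split across cube boundaries. The difficulty is that the best $j$ may differ from cluster to cluster, so taking a maximum over $j$ loses control when different clusters prefer different $j$.

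Our first attempt at the lower bound is to use $\TSP$-efficiency to pass to a hierarchical single-tour decomposition. Since $X$ is essentially one-dimensional at every scale, the difference set should be coverable by $O(1)$ chains at each scale. These chains can be charged to a fixed $j$ by pigeonhole, losing only a factor $N$. If that fails, we would instead embed into a product of roughly $N\cdot C$ trees, indexed by pairs consisting of a shift and a residue class of scales. Finally, embeddability of finite products of $\R$-trees into every non-superreflexive Banach space is a known theorem, via Bourgain's tree characterization and its biLipschitz extension to $\R$-trees, and this gives the last clause of the statement.
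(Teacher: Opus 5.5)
There is a genuine gap. The step that carries all the content is never carried out. The trees $\mathcal T_j$ (``the tree of finite decorated subtrees of $T_j$'') and the maps $\Lam(X)\to\mathcal T_j$ are not defined, so neither bound can be checked, and you explicitly leave the lower bound open (``if that fails, we would instead\dots'').

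The data you propose to encode also does not lead to a workable metric. The distance it suggests, $\sum_Q w(Q)\,[A\cap Q\neq B\cap Q]$ with $w(Q)$ comparable to the side of $Q$, is an $\ell_1$-sum of star metrics and is not a tree metric. Take a cube $Q_0$ with children $Q_1\ni a$, $Q_2\ni c$, and fix the lamplighter. The configurations $\emptyset,\{a\},\{c\},\{a,c\}$ give the three pair sums $2w_0+2w_1$, $2w_0+2w_2$, $2w_0+2w_1+2w_2$, and the two largest are unequal.

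More seriously, any distance in which a change of occupied cubes at scale $2^i$ costs at least of order $2^i$ (which is what your edge weights prescribe) fails the \emph{Lipschitz} bound for every fixed grid. In $\R$, if $0$ is an endpoint of a grid interval of side $2^k$, then $(\emptyset,0)$ and $(\{-\varepsilon\},0)$ have different occupied cubes at every scale between $\varepsilon$ and $2^k$. Their tree distance is therefore of order $2^k$, while their $\TSP$ distance is $2\varepsilon$. Artificial splitting across cube boundaries thus inflates distances, so it threatens the upper bound, not the lower one; you have the roles of the two bounds reversed. Shifted systems only guarantee that \emph{some} $j$ aligns a given cluster well. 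Every factor of a product must be Lipschitz, so this cannot rescue the upper bound. Your planned multiscale comparison lemma fails for the same reason for any fixed grid, and it is beside the point anyway: under $\TSP$-efficiency the tour length is already comparable to a single diameter.

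The missing idea is that $\TSP$-efficiency removes all multiscale bookkeeping. It gives $\TSCP\le\TSP\le K\,\TSCP$, where $\TSCP((A,x),(B,y))=\diam(\{x,y\}\cup(A\triangle B))$. The diameter of a set in $\ell_\infty^m$ is the maximum of the diameters of its coordinate projections. So you perturb $\varphi\colon X\to\ell_\infty^m$ to be componentwise injective (Lemma~\ref{lem:perturbation}), which ensures $\varphi_i(A)\triangle\varphi_i(B)=\varphi_i(A\triangle B)$. Then $(A,x)\mapsto(\varphi_i(A),\varphi_i(x))_{i\le m}$ is an $L$-biLipschitz embedding of $(\Lam(X),\TSCP)$ into $\ell_\infty^m((\Lam(\R),\TSCP))$ (Theorem~\ref{thm:induced}). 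The upper bound is just $\diam\varphi_i(S)\le\Lip(\varphi_i)\diam S$. The lower bound uses only the one pair $z,z'$ realizing the diameter and the one coordinate where $|\varphi_i(z)-\varphi_i(z')|$ is large, so no cluster-alignment issue ever arises.

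Since $\R$ is $2$-$\TSP$-efficient, each factor is $(\Lam(\R),\TSP)$. That space embeds into an $\ell_1$-product of two $\R$-trees by the Stein--Taback horocyclic-product embedding, transferred from $\Lam(\Z)$ to $\Lam(\R)$ by ultralimits (Theorem~\ref{thm:Lam(path)}). This yields an embedding into $\ell_\infty^{2m}(T_{\R})$ with distortion about $12KL$. The tree structure comes from the linear order on each coordinate line, not from dyadic hierarchies. For the final clause, Bourgain's theorem by itself concerns binary trees. What is actually needed is Ostrovskii's theorem that finite $\ell_\infty$-products of trees embed with uniformly bounded distortion into every non-superreflexive Banach space.
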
 

This suggests the following problem:

\begin{problem}\lb{problem-TSP-fd}
Is it true that every $\TSP$-efficient metric space $X$ bilipschitz embeds into a finite dimensional space $\R^m$, where $m$ and the biLipschitz distortion depend only on the $\TSP$-efficiency constant of $X$?
\end{problem}

One piece of evidence in favor of a positive answer towards Problem~\ref{problem-TSP-fd} is that every $\TSP$-efficient metric space has sharp Assouad dimension $\leq 1$ (Lemma~\ref{lem:TSPdoubling}), and at present, no example of a space with sharp Assouad dimension $\leq 1$ failing to embed into a finite dimensional Euclidean space is known.

It is known that the answer to Problem~\ref{problem-TSP-fd} is affirmative in the case when $X$ is a tree. Thus, as a corollary of Theorem~\ref{cor:induced-intro} and known results about biLipschitz embeddability of doubling trees into finite dimensional spaces, we obtain that lamplighter spaces on $\TSP$-efficient trees admit biLipschitz embeddings into finite products of $\R$-trees, and thus also into every non-superreflexive Banach space (see Corollary~\ref{thm:trees}). Theorem~\ref{cor:induced-intro} also gives a new short proof of \cite[Theorem~1.3]{OR19} and \cite[Theorem~1.2]{BMSZ21} (see \S\ref{sec4}).

A natural question that arises from Theorem~\ref{thm:mainintro} is under which conditions does $\Lam(X)$ itself, and not just its snowflakes, biLipschitz embed into a Hilbert space? Our next theorem provides a complete classification of such spaces. Recall that a metric space has Nagata dimension 0 \wtw\ it is biLipschitz equivalent to an ultrametric space (see \S\ref{sec:prelim} and \S\ref{sec:ultra} for necessary definitions).

\begin{theorem} [Theorem~\ref{thm:main2}] \label{thm:main2intro}
Let $X$ be a metric space. Then the following are equivalent.
\begin{enumerate}[label=(\roman*)]
    \item $X$ is $\TSP$-efficient and has Nagata dimension 0.

    \item $X$ has Assouad dimension $<1$.
   
    \item $X$ biLipschitz embeds into $\R$ and has Nagata dimension 0.
  
    \item $\Lam(X)$ has Nagata dimension 0.
    
    \item $\Lam(X)$ biLipschitz embeds into a Hilbert space.
  
    \item $\Lam(X)$ is Markov 2-convex.
    
    \item $\Lam(X)$ does not contain the binary trees with uniformly bounded biLipschitz distortion.
\end{enumerate}
\end{theorem}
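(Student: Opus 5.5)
We will prove Theorem~\ref{thm:main2intro} by a cycle of implications, using Theorem~\ref{thm:mainintro} and standard facts about Nagata dimension 0 and ultrametrics. Throughout, $\Lam(X)$ carries the metric $\TSP$ defined in \S\ref{sec:lamplighter}. The pieces are: (i)$\Leftrightarrow$(ii)$\Leftrightarrow$(iii) on the base space, then (i)$\Rightarrow$(iv)$\Rightarrow$(v)$\Rightarrow$(vi)$\Rightarrow$(vii), and finally (vii)$\Rightarrow$(i).

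For the base space, first show (ii)$\Rightarrow$(i). If $X$ has Assouad dimension $<1$, then it is uniformly perfect-free at every scale, so it has Nagata dimension 0; this is the known fact that Assouad dimension $<1$ forces uniform disconnectedness. For $\TSP$-efficiency, cover a set $A$ of diameter $r$ by balls of radius $2^{-k}r$, at most $C2^{ks}$ of them with $s<1$. A hierarchical tour has length at most $\sum_k C2^{ks}2^{-k}r\lesssim r$, since the series is geometric.

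Next, (i)$\Rightarrow$(iii): the ultrametric structure combined with $\TSP$-efficiency gives a tree-like ordering of the balls. Enumerate the children of each ultrametric ball consecutively and map $X$ into $\R$ by placing each child inside an interval of length comparable to its diameter. Here $\TSP$-efficiency is used to show that the sum of the children's diameters is bounded by a constant times the parent's diameter. This gives a biLipschitz embedding, since distinct children are separated by at least a constant times the parent diameter.

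Then (iii)$\Rightarrow$(ii): a subset of $\R$ with Nagata dimension 0 is uniformly disconnected. A uniformly disconnected subset of $\R$ has porosity at every scale, and porosity in $\R$ forces Assouad dimension $<1$ by a standard counting argument.

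On the lamplighter side, (i)$\Rightarrow$(iv) goes through the ultrametric structure. With $X$ ultrametric, define an ultrametric-type quantity on $\Lam(X)$: for configurations $(A,x)$ and $(B,y)$, take the diameter of the smallest ball containing $x$, $y$ and $A\triangle B$. $\TSP$-efficiency shows that the $\TSP$ distance is comparable to this diameter. The lower bound is trivial; the upper bound uses $\TSP$-efficiency of the ball. Hence $\Lam(X)$ is biLipschitz to an ultrametric.

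The implications (iv)$\Rightarrow$(v) and (v)$\Rightarrow$(vi) are standard. Ultrametrics embed isometrically into Hilbert space, which gives (v). Ultrametrics are Markov 2-convex, with constants preserved under biLipschitz maps, which gives (vi). Then (vi)$\Rightarrow$(vii) holds because binary trees are not uniformly Markov 2-convex.

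For (vii)$\Rightarrow$(i), first note that binary trees are doubling-free, so they do not bound the Hamming cubes. Hence this is not immediate from Theorem~\ref{thm:mainintro}, and we argue contrapositively in two cases.

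\begin{itemize}
\item If $X$ is not $\TSP$-efficient, Theorem~\ref{thm:mainintro} gives Hamming cubes in $\Lam(X)$ with bounded distortion. Hamming cubes contain binary trees with distortion bounded only logarithmically, which is not good enough. Instead, we should extract from the failure of $\TSP$-efficiency sets requiring long tours, and build binary trees directly by toggling lamps along nested subsets.
\item If $X$ fails Nagata dimension 0, then for some fixed $\e$ there are $\e$-chains of length $\sim 1/\e$ across scale $1$ at arbitrarily fine ratios. A chain of points in $X$ yields in $\Lam(X)$ a copy of the lamplighter over a path, i.e.\ over an interval of $\Z$. The lamplighter over $\Z$, $\Z_2\wr\Z$, contains binary trees with bounded distortion: walk right while choosing lamp states at each step.
\end{itemize}

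The main obstacle is the first case of (vii)$\Rightarrow$(i). A likely cleaner route is to show that non-$\TSP$-efficiency forces either non-doubling behaviour or dimension-0 failure at many scales, and to reduce each to the second case.
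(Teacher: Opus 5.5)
\textbf{Main gap: (vii)$\Rightarrow$(i).} You leave this open, and the obstruction you name in your first case does not exist. Binary trees embed \emph{isometrically} into Hamming cubes. Send a vertex of the depth-$n$ binary tree to the set of edges on its path from the root. The symmetric difference of two such sets is exactly the edge set of the path joining the two vertices. So if $\Lam(X)$ contains no binary trees with uniformly bounded distortion, it contains no Hamming cubes either, and Theorem~\ref{thm:main} gives $\TSP$-efficiency at once; this is how the paper argues. Your proposed fallback could not work anyway. The leaf sets $L_n$ of $\WB_{2,n}$ are ultrametric and uniformly doubling, yet not $K$-$\TSP$-efficient for $K<n$, so non-$\TSP$-efficiency does not force non-doubling behaviour or failure of Nagata dimension $0$.

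Your second case is where an idea is actually missing. An $\eps$-chain is not a biLipschitz path, and failure of Nagata dimension $0$ alone yields neither chains with $\sim 1/\eps$ steps nor uniformly biLipschitz paths. For example, $(\R,|\cdot|^{1/2})$ has Nagata dimension $1$, but its $\eps$-chains across scale $1$ need $\sim\eps^{-2}$ steps, and it contains no uniformly biLipschitz copies of $\{0,\dots,k\}$ for large $k$. The paper supplies the missing step as follows. No binary trees in $\Lam(X)$ forces no uniformly biLipschitz paths in $X$, since the lamplighter over a path contains binary trees, as you say. By the Tyson--Wu characterization of snowflakes, $X$ is then biLipschitz to some $(Y,d^{1-\eps})$. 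Since $\TSP$-efficiency gives Assouad dimension $\le 1$ (Lemma~\ref{lem:TSPdoubling}), $(Y,d)$ has Assouad dimension $\le 1-\eps<1$. Hence $(Y,d)$ has Nagata dimension $0$, and therefore so does $X$.

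\textbf{Second gap: (i)$\Rightarrow$(iii).} This also fails as stated. Bounding the sum of the children's diameters by a constant times the parent's diameter is a one-level condition, and the recursion does not close. Suppose each child sits in an interval of length $\le C\diam(\text{child})$ and consecutive children are separated by gaps $\diam(B)$. Then the parent's interval has length up to $C\sum\diam(\text{child})+(n_B-1)\diam(B)$. This exceeds $C\diam(B)$ as soon as the children's diameters sum to about $\diam(B)$, so constants compound with depth. The sets $L_n$ show the condition cannot suffice: there the two children of every ball have diameters summing to at most the parent's, yet $L_n$ do not embed into $\R$ with bounded distortion. The correct interval length is the multiscale quantity $\ell(B)=\sum_C\ell(C)+(n_B-1)\diam(B)$, which is the minimum spanning tree (or optimal tour) length of $B$. $\TSP$-efficiency applied to all of $B$ bounds this by $K\diam(B)$. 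The paper realizes this as cumulative length along an optimal tour, using the monotonicity in Lemma~\ref{lem:monotone} (Theorem~\ref{thm:TSPultrametricbiLipembedsintoR}).

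\textbf{Smaller points.} You justify (v)$\Rightarrow$(vi) via ultrametrics, which is really (iv)$\Rightarrow$(vi) and leaves (v) with no outgoing implication. You need the fact that Hilbert space is Markov $2$-convex.

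On the positive side, your (ii)$\Rightarrow$(i) via hierarchical nets is correct. Your (i)$\Rightarrow$(iv) is a genuine simplification: when the base metric is an ultrametric, $\TSCP$ is itself an ultrasemimetric, because $\{x,y\}\cup(A\Delta B)$ and $\{y,z\}\cup(B\Delta C)$ share $y$. This avoids the paper's passage through $\R$ in Theorem~\ref{thm:La(X)ultrametric}.
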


\subsection{The key ingredient}
The key conceptual tool  we use throughout the article is a new semimetric on $\Lam(X) = \P_{<\omega}(X) \times X$ (for a set $X$, we denote by $\P(X)$ the power set of $X$ and by $\P_{<\omega}(X)$ the collection of all finite subsets of $X$) which, in general, is significantly less complex than the standard traveling salesman semimetric $\TSP$. The new semimetric, denoted $\TSCP: \Lam(X)\times \Lam(X) \to [0,\infty)$, is defined to be the length of the solution of the {\it ``traveling salesman shortcut problem''}, that is, the shortest distance a salesman travels if they are allowed to shortcut the full problem by visiting only the two points furthest away from each other in $\{x,y\} \cup (A\Delta B)$. Formally, 
$$\TSCP((A,x),(B,y)) := \diam(\{x,y\}\cup (A\Delta B)).$$

Clearly, in any metric space $X$, $\TSCP\le \TSP$ (in the language of diversities \cite{diversities}, one can see 
$\TSCP$ as the basepointed analogue of the {\it diameter diversity} and $\TSP$ as the basepointed analogue of the {\it Steiner tree diversity}). If the weak reverse inequality $\TSP\le K\cdot\TSCP$ holds in $X$ for some constant $K<\infty$, then we say that $X$ is  {\it $K$-$\TSP$-efficient}. The much simpler form of the 
semimetric $\TSCP$ enables us to prove our key technical result which says that for any metric spaces $X,Y$, a biLipschitz embedding of  $X$ into an $\ell_\infty$-product of $Y$ can be lifted, without increasing the distortion, to a biLipschitz embedding of $(\Lam(X), \TSCP) $ into the $\ell_\infty$-product of $(\Lam(Y), \TSCP)$ (Theorem~\ref{thm:induced}).  The approach of considering  the induced embedding was  inspired by the Matoušek and Rabinovich construction of dominated line metrics  \cite{MR01}, however we use  the $\ell_\infty$-products instead of  the $\ell_1$-products that were used in \cite{MR01}.

Another very important advantage of the semimetric $\TSCP$ is that, unlike $\TSP$, it behaves very well with respect to snowflaking, namely, as is easy to see, it satisfies $(\Lam(X),\TSCP^{1-\delta})=(\Lam(X,d^{1-\delta}),\TSCP)$, which is essential for the proof of $(i)\Rightarrow (ii)$ in Theorem~\ref{thm:mainintro} (see Theorem~\ref{thm:TSP-efficient}).

After an initial version of the present paper was posted on arXiv, Anthony Genevois informed us that in his studies of lamplighter spaces through the lens of diadem products, he considered another simplified lamplighter metric which in certain situations is equivalent to $\TSCP$, see \cite{G22}. We thank him for bringing this connection to our attention.

\subsection{Organization}

In \S\ref{sec:prelim}, we recall the definitions and selected properties of notions used in the paper.
Lamplighter spaces are defined in \S\ref{sec-def-lamp} and the definition and first properties of $\TSP$-efficient metric spaces are in \S\ref{sec:TSPefficient}. In \S\ref{S:tsp-ineff} we prove that Hamming cubes embed with uniformly bounded distortion in lamplighter spaces over $\TSP$-inefficient spaces (Theorem~\ref{thm:TSPinefficientHamming}). The key technical 
Theorem~\ref{thm:induced} is in \S\ref{sec-weak-emb}, and the main characterization result of the paper is Theorem~\ref{thm:main} (= Theorem~\ref{thm:mainintro}), which combines several results from  \S\ref{sec:lamplighter}  is presented in \S\ref{S:main}.

In \S\ref{sec4} we briefly  discuss applications of our results to biLipschitz embeddings of lamplighter spaces on finite dimensional  $\TSP$-efficient spaces, including $\Lam(T)$, where $T$ is a  $\TSP$-efficient tree (Corollaries~\ref{cor:induced1-2} 
and \ref{thm:trees}).

In \S\ref{sec:ultra} we obtain the full characterization of metric spaces $X$ such that $\Lam(X)$ biLipschitz embeds into a Hilbert space, Theorem~\ref{thm:main2}(=Theorem~\ref{thm:main2intro}).

\section{Preliminaries} \label{sec:prelim}

We use standard  definitions and notation, as may be found in \cite{Evans,Hei,
NaorRibe,Ost13}, where we also refer the reader for  undefined terms and more details on relevant notions.

We say that $(X,d_X)$ is a {\it semimetric space} if $d_X: X\times X \to [0,\infty)$ satisfies $d_X(x,x) = 0$, $d_X(x,y) = d_X(y,x)$, and $d_X(x,z) \leq d_X(x,y) + d_X(y,z)$ for all $x,y,z\in X$. If, additionally, $d_X(x,y) > 0$ for all $x \neq y \in X$, then $(X,d_X)$ is a {\it metric space}. When $f: X \to Y$ is a map between semimetric spaces, the Lipschitz constant of $f$ is defined to be the (possibly infinite) quantity $\Lip(f) := \inf\{L \geq 0: \forall x,y\in X, \: d_Y(f(x),f(y)) \leq Ld_X(x,y)\}$. Maps $f$ with $\Lip(f)<\infty$ are called {\it Lipschitz}. 
We say that $f: X \to Y$  is a {\it $D$-biLipschitz embedding}, or $f$ has {\it biLipschitz distortion $D$}, if $f$   is a Lipschitz map and  $d_Y(f(x),f(y)) \geq D^{-1}\Lip(f)d_X(x,y)$ for all $x,y\in X$.

\subsection{$\ell_q$-products and wedge sums of semimetric spaces}

Let $\{(X_\alpha,d_\alpha)\}_{\alpha\in A}$ be an indexed family of semimetric spaces equipped with designated basepoints $p_\alpha \in X_\alpha$. For $q \in [1,\infty]$, their {\it $\ell_q$-product} is the semimetric space $(X,d)$ consisting of elements of the Cartesian product $(x_\alpha)_{\alpha\in A} \in \Pi_{\alpha\in A} X_{\alpha}$ such that $\sum_{\alpha\in A}d_\alpha(x_\alpha,p_\alpha)^q < \infty$ (or $\sup_{\alpha\in A}d_\alpha(x_\alpha,p_\alpha) < \infty$, if $q=\infty$), where 
\begin{equation*}
d((x_\alpha)_\alpha,(y_\alpha)_\alpha) =
\begin{cases} \big(\sum_{\alpha\in A} d_\alpha(x_\alpha,y_\alpha)^q\big)^{1/q} & \text{ if } q<\infty,\\
\sup_{\alpha\in A} d_\alpha(x_\alpha,y_\alpha)  & \text{ if } q=\infty.
\end{cases}
\end{equation*}
Note that, when $A$ is a finite set, the resulting product space is independent of the choice of basepoints, and hence we omit reference to them in such cases. We write the $\ell_q$ product semimetric space as 
$\ell_q(\{X_\alpha\}_{\alpha\in A})$. When there is a semimetric space $(X,d)$ with basepoint $p$ such that 
$(X_\alpha,d_\alpha,p_\alpha) = (X,d,p)$ for all $\alpha \in A$, we write $\ell_q^A(X)$ instead of $\ell_q(\{X_\alpha\}_{\alpha\in A})$. If $|A| = m\in\N$, we write $\ell_q^m(X)$, 
and if $(X,d,p) = (\R,|\cdot|,0)$, we  just write $\ell_q^A$ or $\ell_q^m$, 
 respectively.

The {\it wedge sum} of $\{X_\alpha\}_{\alpha\in A}$ is the subset of $\ell_1(\{X_\alpha\}_{\alpha\in A})$ consisting of those $(x_\alpha)_\alpha$ for which $x_\alpha \neq p_\alpha$ for at most one value of $\alpha\in A$.
 
\subsection{Nagata dimension and $\R$-trees}

Let $n\in\N$ and $\gamma<\infty$. Following \cite{LS}, we say that a semimetric space $X$ has {\it Assouad-Nagata dimension $\leq n$ with constant $\leq \gamma$}, or just {\it Nagata dimension $\leq n$ with constant $\leq \gamma$}, if for all scales $s>0$, there exists a covering $\B$ of $X$ such that
\begin{itemize}
    \item $\diam(B) \leq \gamma s$ for every $B\in\B$ and
    \item for every $A \sbs X$ with $\diam(A) < s$, the cardinality of the set $\{B\in\B: B \cap A \neq \emptyset\}$ is at most $n+1$.
\end{itemize}
Equivalently, for all scales $s>0$, there exists a cover of $X$ consisting of $n+1$ subsets $Y_1, \dots Y_{n+1} \sbs X$ such that, for each $i\in\{1,\dots n+1\}$, if $\{y_j\}_{j=0}^k$ is a sequence of points in $Y_i$ with $d(y_{j-1},y_j) \leq s$ for all $j$, then $d(y_0,y_k) \leq \gamma s$. We say that $X$ has {\it finite Nagata dimension} if it has Nagata dimension $\leq n$ with constant $\leq\gamma$ for some $n\in\N$ and $\gamma<\infty$.

\begin{rem} \label{rmk:Nagata}
The second formulation of  Nagata dimension implies that it is a finitely determined property. That is, if every finite subset of $X$ has Nagata dimension $\leq n$ with constant $\leq\gamma$, then $X$ has Nagata dimension $\leq n$ with constant $\leq\gamma$. Indeed, this can be seen by the following simple compactness argument: Fix a scale $s>0$, and choose, for each finite subset $F\in\P_{<\omega}(X)$, a cover $Y_1^F, \dots Y_{n+1}^F$ of $F$ satisfying the second formulation of Nagata dimension $\leq n$ with constant $\leq\gamma$. We think of each set $Y_i^F$ as an element of the compact topological space $\P(X)$, equipped with the topology of pointwise convergence of characteristic functions. Choose an increasing cofinal function $A\ni \alpha \mapsto F_\alpha \in \P_{<\omega}(X)$ such that the subnet $(Y^{F_\alpha}_i)_{\alpha\in A}$ converges for each $i\in\{1,\dots n+1\}$, and let $Y_i$ denote the limit. Then $Y_1, \dots Y_{n+1}$ is a cover of $X$ witnessing that $X$ has Nagata dimension $\leq n$ with constant $\leq\gamma$.
\end{rem}

Perhaps the most important metric spaces in the theory of Nagata dimension are {\it $\R$-trees}. There are many equivalent ways to define this class of spaces. One way is to say that $X$ is an $\R$-tree if any two points in $X$ are joined by a unique arc, that is, the image of a topological embedding $\gamma: [0,1] \to X$ with $\gamma(0) = x$ and $\gamma(1)=y$,
 and this arc is a geodesic. Clearly, any graph theoretical tree - equipped with any weighted shortest path metric - isometrically embeds into an $\R$-tree. See \cite{Evans} for further background on $\R$-trees.

A natural operation on $\R$-trees are wedge sums. The wedge sum of $\{X_\alpha\}_{\alpha\in A}$ is an $\R$-tree whenever $X_\alpha$ is an $\R$-tree for all $\alpha\in A$. If $X_\alpha$ equals $\R$ for all $\alpha$, then the wedge sum is called an {\it $\R$-star}. 

The following result is due to Lang and Schlichenmaier \cite{LS}.

\begin{theorem} \label{thm:LStrees} 
Let $m\in\N$, $q\in [1,\infty]$, and $T$ be an $\R$-tree. Then the Nagata dimension of $\ell_q^m(T)$ is $\leq m$ with constant $\leq\gamma$, where $\gamma<\infty$ depends only on $q, m$.  Consequently, if $(X,d)$ is a semimetric space such that $(X,d^{1-\eps})$ admits an $L$-biLipschitz embedding into $\ell_q^m(T)$ for some $\eps\in(0,1)$ and $L<\infty$, then the Nagata dimension of $(X,d)$ is $\leq m$ with constant $\leq\gamma$, where $\gamma<\infty$ depends only on $q,m,\eps,L$.
\end{theorem}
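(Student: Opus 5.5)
The plan is to prove the first assertion in three steps: Nagata dimension of $T$ itself, then of the product $\ell_\infty^m(T)$, then of $\ell_q^m(T)$. The second assertion then follows from the fact that snowflaking and biLipschitz maps distort Nagata constants in a controlled way.

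\emph{Step 1: $T$ has Nagata dimension $\leq 1$ with a universal constant.} Fix a basepoint $o\in T$ and a scale $s>0$, and partition $T$ into annuli $R_k=\{x: d(o,x)\in[ks,(k+1)s)\}$. Within $R_k$, put two points in the same piece if their geodesics to $o$ agree up to distance $(k-1)s$ from $o$. A standard argument in $\R$-trees shows these pieces have diameter $\leq 4s$. Any set of diameter $<s$ meets only boundedly many pieces, since it touches at most two consecutive annuli and, by uniqueness of geodesics, at most one piece in each. Taking the even annuli as one color class and the odd annuli as another gives the second formulation of Nagata dimension $\leq 1$: a chain of $s$-steps inside one color stays within a single piece, because crossing to a different piece of the same parity requires a step of length $\geq s$. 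The constant here is absolute. By Remark~\ref{rmk:Nagata}, it would also suffice to treat finite subtrees.

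\emph{Step 2: $\ell_\infty^m(T)$ has Nagata dimension $\leq m$.} The Nagata dimension of a product satisfies $\dim(X\times Y)\leq \dim X+\dim Y$, with constants depending on those of the factors. This is the main obstacle, because it is the Lang--Schlichenmaier product theorem and the naive product of covers only gives multiplicity $2^m$. I would first try to handle the tree case directly. For each coordinate, use the covering from Step 1 at a family of shifted scales, that is, shifted annulus offsets $o$-distance $+\, js/(m+1)$. Then, by a pigeonhole over $m+1$ shifts, any set of diameter $<s/(m+1)$ lies well inside a piece for all but one coordinate per shift. This yields $m+1$ color classes whose $s$-chains stay bounded, in the style of the Ostrand/Kolmogorov argument for $\R^m$. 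If that bookkeeping fails, I would instead cite the product theorem of \cite{LS} directly, which is legitimate since the statement is attributed to them.

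\emph{Step 3: passing to $\ell_q$ and to $X$.} For finite $m$ the metrics on $\ell_q^m(T)$ and $\ell_\infty^m(T)$ are $m^{1/q}$-biLipschitz equivalent. Nagata dimension is biLipschitz invariant, with the constant scaling by the distortion (after rescaling $s$). So the first claim holds with $\gamma$ depending on $q$ and $m$.

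For the consequence, $(X,d^{1-\eps})$ inherits dimension $\leq m$ with constant depending on $q,m,L$. Snowflaking preserves the second formulation in the following sense. Given $s$, apply the $d^{1-\eps}$ cover at scale $s^{1-\eps}$. A $d$-chain with steps $\leq s$ is a $d^{1-\eps}$-chain with steps $\leq s^{1-\eps}$, so its endpoints satisfy
\[
d(y_0,y_k)\leq (\gamma' s^{1-\eps})^{1/(1-\eps)}=\gamma'^{1/(1-\eps)}s.
\]
This gives constant $\gamma'^{1/(1-\eps)}$, which depends only on $q,m,\eps,L$.
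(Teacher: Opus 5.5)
Your outline is correct, but it is organized differently from the paper's proof, which is essentially two citations. The paper takes the first assertion from Lang--Schlichenmaier's theorem on products of $\R$-trees. It gets uniformity of $\gamma$ over all $T$ either by inspecting their proof or by a compactness trick: if the constants of $\ell_q^m(T_n)$ blew up, the wedge sum of the $T_n$ would be a single $\R$-tree whose $\ell_q^m$-power contains every $\ell_q^m(T_n)$ isometrically, contradicting the qualitative theorem. The consequence is likewise quoted from their Lemma~2.1.

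You proceed differently on each point. You get uniformity from Step~1 (constant $4$ for every $\R$-tree) plus a product theorem whose constant depends only on the factors' constants. You reduce $\ell_q$ to $\ell_\infty$ via the $m^{1/q}$-equivalence. You prove the consequence by the direct chain estimate $d(y_0,y_k)\le\gamma'^{1/(1-\eps)}s$, which is more elementary than the citation and gives explicit constants. One caution: if you fall back on a product theorem stated only qualitatively, Step~1's uniform constant does not transfer by itself. You would then need the paper's wedge-sum argument or an inspection of the proof.

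Your Ostrand-type sketch can replace the citation entirely, but not as written. A Lebesgue-number statement about sets of diameter $<s/(m+1)$ does not produce color classes with bounded chains. Adjacent pieces of the same shift touch, and chains with steps $\le s$ can cross boundaries spaced $s/(m+1)$ apart. You need to shrink the pieces:
\begin{itemize}
\item Let $h=d(o,\cdot)$, $\delta=s/(m+1)$ and $j\in\{0,\dots,m\}$. Let $Y_j$ consist of those $x\in T^m$ with $\operatorname{dist}(h(x_i),s\Z+j\delta)\ge\delta/2$ for every $i$.
\item The lattices $s\Z+j\delta$ partition $\delta\Z$, so each coordinate excludes at most one $j$. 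Hence $Y_0\cup\dots\cup Y_m=T^m$.
\item A step of $\ell_\infty$-length $<\delta$ between points of $Y_j$ cannot change the shifted annulus in any coordinate. It then cannot change the branch either.
\item So chains in $Y_j$ with steps $\le\delta/2$ stay in one product piece of diameter $\le 4s$.
\end{itemize}
This gives Nagata dimension $\le m$ for $\ell_\infty^m(T)$ with constant $8(m+1)$, and makes your proof self-contained.
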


\begin{proof}
The first part is essentially \cite[Theorem~3.2]{LS}, except that their theorem is not stated with the quantitative control on the constant $\gamma$ we require. However, the quantitative control does hold, as can be seen by examining the proof of \cite[Theorem~3.2]{LS}.

Alternatively, one can argue that the statement without quantitative control formally implies the same statement with quantitative control. Indeed, suppose towards a contradiction that there is a sequence of $\R$-trees $(T_n)_{n\geq 0}$ such that the least Nagata $m$-dimensional constants $\gamma_n$ of $\ell_q^m(T_n)$ tends to $\infty$. Let $T$ be the wedge sum of $\{T_n\}_n$. Then $T$ is again an $\R$-tree, and thus by \cite[Theorem~3.2]{LS}, $\ell_q^m(T)$ has Nagata dimension $\leq m$. Since $\ell_q^m(T)$ contains each space $\ell_q^m(T_n)$ isometrically, this contradicts $\gamma_n\to\infty$.

The second part follows from \cite[Lemma~2.1]{LS}. That lemma is again stated without quantitative control, but the quantitative control obviously must hold, for example, by the same type of argument as in the previous paragraph.
\end{proof}

\subsection{Doubling metric spaces and Assouad dimension}

Let $K\in\N$. A metric space $X$ is called {\it $K$-doubling} if for every $r>0$, every ball $B \sbs X$ of radius $r$ can be covered by $K$ balls $B_1, \dots, B_K$ of radii $r/2$. It is an elementary consequence of Zorn's Lemma that this covering condition is implied by the statement that every $r/2$-separated subset of $B$ has cardinality at most $K$. One of the most important results about doubling spaces is  the Assouad's embedding theorem \cite{Assouad}, cf. \cite[Theorem~12.2]{Hei}, \cite{NN12}. We recall the precise statement here:

\begin{theorem}[Assouad Embedding Theorem] \label{thm:Assouad}
For every $K\in\N$ and $\de\in(0,1)$, there exist $m=m(K,\delta)\in\N$ and $D=D(K,\de)\in(1,\infty)$ such that for every $K$-doubling metric space $(X,d)$, the snowflake space $(X,d^{1-\de})$ admits a $D$-biLipschitz embedding into the Euclidean space $\ell_\infty^m$. Moreover, if $\delta<\frac{1}{2}$, then $m$ can be taken independent of $\delta$.
\end{theorem}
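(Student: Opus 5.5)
The proof follows Assouad's original scheme: build one snowflaked embedding at each dyadic scale, then glue these into finitely many coordinates.

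\emph{Step 1: one scale.} Fix $k\in\Z$. Choose a maximal $2^k$-separated net $N_k\sbs X$. The $K$-doubling property gives a uniform bound $M=M(K)$ on the number of net points in any ball of radius $C2^k$, for a fixed absolute constant $C$ (say $C=12$). Greedy colouring of the graph on $N_k$ (edges between points at distance $\le C2^k$) then yields a colouring $\chi_k:N_k\to\{1,\dots,M\}$ in which any two points within $C2^k$ of each other get different colours.

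For each colour $j$ define
\[
\phi_{k,j}(x)=\sum_{z\in N_k,\ \chi_k(z)=j}\max\{0,\,2^{k}-d(x,z)\}.
\]
These are bump functions supported on $B(z,2^k)$. Since same-coloured net points are more than $C2^k$ apart, the supports are disjoint, so each $\phi_{k,j}$ is $1$-Lipschitz and bounded by $2^k$. Collect them into $\Phi_k=(\phi_{k,1},\dots,\phi_{k,M}):X\to\ell_\infty^M$. Then
\[
\|\Phi_k(x)-\Phi_k(y)\|_\infty\le\min\{d(x,y),2^k\}.
\]
Conversely, suppose $d(x,y)\in[C'2^k,C'2^{k+1}]$ for a suitable constant $C'$. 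Take a net point $z$ with $d(x,z)<2^k$. Then $\phi_{k,\chi_k(z)}(x)\gtrsim 2^k$, while $y$ lies outside every same-coloured bump, so $\|\Phi_k(x)-\Phi_k(y)\|_\infty\gtrsim 2^k$.

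\emph{Step 2: gluing scales.} Fix an integer $L$ to be chosen. Group the scales by their residue $r=k \bmod L$, and set
\[
F_r(x)=\sum_{k\equiv r \bmod L} 2^{-k\de}\,\Phi_k(x)\in\ell_\infty^M .
\]
The full map is $F=(F_0,\dots,F_{L-1})$, which lands in $\ell_\infty^{LM}$, so $m=LM$.

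The upper bound is a geometric sum. Let $d(x,y)\approx 2^{k_0}$. Scales with $k\ge k_0$ contribute at most $\sum_{k\ge k_0} 2^{-k\de}d(x,y)\lesssim_\de d^{1-\de}$. Scales with $k<k_0$ contribute at most $\sum_{k<k_0} 2^{k(1-\de)}\lesssim_\de d^{1-\de}$.

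The lower bound is the main obstacle. Consider the residue class containing the scale $k_0$ that detects $d(x,y)$. In the same class, the terms with $k<k_0-L$ sum to at most about $2^{(k_0-L)(1-\de)}$, which is small relative to $2^{k_0(1-\de)}$ once $L$ is large. The terms with $k>k_0$ are the real problem: they are not small. The usual fix is to replace the naive sum by a sign-alternating or phase-rotated combination. Assouad's trick is to take the colour index of scale $k$ shifted cyclically with $k$, so that consecutive same-class scales occupy different coordinates of $\ell_\infty$. In that arrangement, coarse scales at distinct coordinates do not interfere in the coordinate realizing the scale-$k_0$ separation, except through their Lipschitz variation. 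That variation is at most $\sum_{k\ge k_0+L}2^{-k\de}d\lesssim 2^{-L\de}d^{1-\de}$, which is small for $L\gtrsim \de^{-1}\log(1/\de)$. This resolves the obstacle, at the cost of $L$ (and hence $m$) depending on $\de$, with $D$ depending on $K$ and $\de$.

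\emph{Step 3: the ``moreover'' clause.} For $\de<\tfrac12$, I would pass through the refined argument of Naor and Neiman \cite{NN12}. That argument uses random padded partitions combined with a Gaussian- or threshold-type gluing, which removes the dependence of the dimension on $\de$ as $\de\to 0$, giving $m=O(\log K)$. Its constructions in $\ell_2$ transfer to $\ell_\infty^m$ at constant cost because $m$ is now fixed. Rather than reproving this, I would cite \cite{NN12}, together with \cite[Theorem~12.2]{Hei} for the basic statement.
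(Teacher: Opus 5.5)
The paper does not prove this theorem; it quotes it from \cite{Assouad}, \cite[Theorem~12.2]{Hei} and \cite{NN12}. Your plan is the standard argument behind those references: a single-scale colouring construction glued over residue classes of scales for the basic statement, and a citation of \cite{NN12} for the dimension-free clause. For that clause, passing from Euclidean $\R^N$ with $N=N(K)$ to $\ell_\infty^N$ costs at most a factor $\sqrt N$. So the route is right, but three steps need repair as written.

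\emph{Step 1: the bumps are too narrow.} You have $\phi_{k,\chi_k(z)}(x)=2^k-d(x,z)$. From $d(x,z)<2^k$ alone this can be arbitrarily close to $0$, so the claimed bound $\gtrsim 2^k$ fails. Use $\max\{0,2^{k+1}-d(x,z)\}$ instead, so that $\phi_{k,\chi_k(z)}(x)\ge 2^k$. Disjointness of supports, the $1$-Lipschitz bound, and the fact that $y$ misses every bump of colour $\chi_k(z)$ then hold with, e.g., $C'=3$ and $C=12$; you need $C'\ge 3$ and $C\ge 2C'+3$.

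\emph{Convergence.} The series $\sum_{k\equiv r}2^{-k\de}\Phi_k(x)$ diverges at coarse scales, since its summands have size up to $2^{k(1-\de)}$. Replace $\Phi_k(x)$ by $\Phi_k(x)-\Phi_k(x_0)$ for a fixed basepoint $x_0$.

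\emph{Step 2: the difficulty is misplaced.} Once scales are grouped by $k\bmod L$ (this grouping \emph{is} Assouad's trick), the coarse scales in the class of $k_0$ are harmless, for the same reason as in your upper bound. Each $\Phi_k$ is $1$-Lipschitz into $\ell_\infty^M$ and $d(x,y)\le 2C'2^{k_0}$, so
\[
\Big\|\sum_{\substack{k\ge k_0+L\\ k\equiv k_0 \bmod L}}2^{-k\de}\big(\Phi_k(x)-\Phi_k(y)\big)\Big\|_\infty\le d(x,y)\sum_{j\ge 1}2^{-(k_0+jL)\de}\le 2C'\,\frac{2^{-L\de}}{1-2^{-L\de}}\,2^{k_0(1-\de)}.
\]
This is at most a quarter of the main term $2^{-k_0\de}\|\Phi_{k_0}(x)-\Phi_{k_0}(y)\|_\infty\ge 2^{k_0(1-\de)}$ once $L\de$ exceeds an absolute constant. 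It is the estimate you eventually write, but it needs no rearrangement of coordinates.

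The cyclic shift of colour indices is only a permutation of the $M$ coordinates at each scale. Every coarse scale still occupies the coordinate realizing the scale-$k_0$ separation, so the claim that they do not interfere there is false, though fortunately unnecessary.

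Symmetrically, the fine scales $k\le k_0-L$ contribute at most $2\,\frac{2^{-L(1-\de)}}{1-2^{-L(1-\de)}}\,2^{k_0(1-\de)}$. The correct requirement is therefore $L\ge c\,(\de^{-1}+(1-\de)^{-1})$ for an absolute constant $c$, not $L\gtrsim \de^{-1}\log(1/\de)$, which does not control the fine scales as $\de\to 1$.

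With these fixes your argument proves the first assertion with $m=L\cdot M(K)$. The citation of \cite{NN12} then handles the ``moreover'' clause, exactly as in the paper.
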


Let $\alpha \in [0,\infty)$ and $\gamma<\infty$. We say that a metric space $(X,d)$ has {\it sharp Assouad dimension $\leq \alpha$ with constant $\leq \gamma$} if for every $x\in X$ and every $0 < r \leq R < \infty$, if $S \sbs B_R(x)$ is $r$-separated, then $|S| \leq \gamma(\frac{R}{r})^\alpha$. The infimum over all such $\alpha$ is the {\it Assouad dimension} of $X$ (allowing the value $\infty$ if no such $\alpha$ exists). The Assouad dimension of a metric space is always an upper bound for its Nagata dimension \cite{LR15}. It is well-known and easy to see that a metric space has finite Assouad dimension if and only if it is doubling (see, for example, \cite[page~25]{LR15}). 

\subsection{Markov type and Hamming cubes}

There are multiple nonlinear notions of type serving as metric space analogues  of the classical  notion of Rademacher type in normed linear spaces, see \cite{NaorRibe} for more information.  Here we recall the definitions of two of these notions.

We say that a semimetric space $(X,d_X)$ has {\it Markov type $p$}, where  $p\in[1,2]$, if there exists a constant $T<\infty$ so that for every stationary reversible Markov chain $\{Z_t\}_{t\geq 0}$ on a finite state space $\Omega$, every map $f: \Omega \to X$, and every $t \geq 0$, we have 
\begin{equation*}
    \E[d_X(f(Z_t),f(Z_0))^p] \leq T^p\E[d_X(f(Z_1),f(Z_0))^p].
\end{equation*}

We say that $(X,d_X)$ has {\it Enflo type $p$},  where  $p\in[1,2]$, if there exists  $T<\infty$ so that for every finite set $I$ and every map on the power set $\theta: \P(I) \to X$, we have 
\begin{equation}\lb{Enflotype}
    \sum_{A\in\P(I)}d_X(\theta(A),\theta(A^c))^p \leq T^p\sum_{i\in I}\sum_{A\in\P(I)}d_X(\theta(A),\theta(A\Delta\{i\}))^p.
\end{equation}

Let $M \in \N$. The {\it $M$-dimensional Hamming cube} is the power set $\P(\{1,\dots M\})$ equipped with the metric 
$d_H(S,T) = |S\Delta T|$ (where $|V|$ denotes the cardinality of the set $V$), that is,  the Hamming cube has the metric structure of a graph equipped with the (unweighted) shortest path metric, where the edge set consists of pairs $\{S,S\Delta\{n\}\}$ for $S,\{n\} \in \P(\{1,\dots M\})$.

A semimetric space $X$ is said to {\it contain the Hamming cubes with uniformly bounded distortion} if there exists  $C<\infty$ such that for every $M\in\N$, the $M$-dimensional Hamming cube admits a $C$-biLipschitz embedding into $X$.

If a semimetric space $X$ has nontrivial Enflo or Markov type (i.e. it has Enflo type $p$ or Markov type $p$, for some $p>1$), then it cannot contain the Hamming cubes with uniformly bounded distortion. In the case of Enflo type, this is clear from the definition. In the case of Markov type, this is implicit in Ball's proof of \cite[Theorem~2.2]{Ball92}, but is also follows from the result that Markov type $p$ implies Enflo type $p$ \cite[Proposition~1]{NS}.

The following deep result is due to Ding, Lee, and Peres.

\begin{theorem}[{\cite[Theorem~1.6]{DLP}}] \label{thm:DLP}
Every semimetric space of finite Nagata dimension has Markov type 2.
\end{theorem}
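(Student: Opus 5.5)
The statement just given is Theorem~\ref{thm:DLP} of Ding, Lee, and Peres: every semimetric space of finite Nagata dimension has Markov type $2$. The plan is to take the cover of $X$ given by the Nagata condition at each dyadic scale, turn it into a stochastic decomposition of $X$ that is Lipschitz with constant independent of the scale, and then feed this decomposition into the martingale method of Naor--Peres--Schramm--Sheffield, which converts such decompositions into Markov type $2$ inequalities.

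\textbf{Step 1 (reductions).} Markov type concerns finitely many points $f(\Omega)$, so it suffices to treat finite semimetric spaces with uniform constants. If $d(x,y)=0$ we may identify $x$ and $y$, since the Nagata condition and the Markov type inequality both pass to the metric quotient. So we may assume $X$ is a finite metric space with Nagata dimension $\leq n$ and constant $\gamma$.

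\textbf{Step 2 (padded random partitions).} Fix a scale $s=2^k$. Take the cover $Y_1,\dots,Y_{n+1}$ from the second formulation of Nagata dimension at scale $s$. On each $Y_i$, the $s$-chain components have diameter $\leq\gamma s$. For each point choose a uniformly random threshold and assign the point to a colour class using a random ordering, in the style of Calinescu--Karloff--Rabani. The goal is a random partition $P_k$ of $X$ with two properties:
\begin{itemize}
\item each cluster has diameter $\lesssim\gamma s$;
\item $\Pr[x,y \text{ separated}]\leq C(n,\gamma)\,d(x,y)/s$.
\end{itemize}
Finite dimension enters here: each point meets only $n+1$ colour classes, so the separation probability carries no $\log|X|$ factor. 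Equivalently, each point is $\delta s$-padded with probability bounded below, with $\delta=\delta(n,\gamma)$.

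\textbf{Step 3 (Markov type 2 from the decompositions).} This follows Ding--Lee--Peres. Realize the partitions $P_k$ over all scales, and use them to build, for a stationary reversible chain $Z_t$, a martingale-type decomposition of $f(Z_t)$. The main tool is the Naor--Peres--Schramm--Sheffield forward/backward martingale decomposition: for a reversible chain and a real-valued function $g$,
\[
g(Z_t)-g(Z_0)=\tfrac12\,(M_t-N_t),
\]
where $M_t$ and $N_t$ are forward and backward martingales whose increments are controlled by $\E|g(Z_1)-g(Z_0)|^2$. Apply this to the real-valued $1$-Lipschitz functions $g_k(x)=\min(d(x,X\setminus C_k(x)),s)$, where $C_k(x)$ is the cluster of $x$ in $P_k$. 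Doob's maximal inequality then gives: the time for the chain to travel distance $\sim s$ outside its cluster has the correct quadratic lower bound, namely the probability that $d(f(Z_t),f(Z_0))\gtrsim s$ is at most $Ct\,\E d(f(Z_1),f(Z_0))^2/s^2$. This uses padding: if the chain stays inside a padded cluster, its displacement is $\lesssim\gamma s$. Summing over dyadic scales $s$ with weights $s^2$ gives $\E d(f(Z_t),f(Z_0))^2\leq C t\,\E d(f(Z_1),f(Z_0))^2$.

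\textbf{Main obstacle.} The hardest step is the summation over scales, together with the non-padded event. When the chain starts near a cluster boundary, the padding bound fails. Control of this case needs the probability of being non-padded to be proportional to $\delta$, together with an averaging over random partitions that is stationary in time. Ding--Lee--Peres handle this with a careful ``snowflake/scale-gluing'' argument, in which one compares with embeddings of the snowflake into products of trees, i.e.\ ultrametrics. My first approach would be to use Step 2 to embed $(X,d^{1/2})$ into a bounded product of ultrametrics/trees, with bounds depending only on $n$ and $\gamma$. Then I would invoke the known fact that $\R$-trees, and finite $\ell_2$-products of them, have Markov type $2$ (Naor--Peres--Schramm--Sheffield, with constants uniform over trees), and verify that the snowflake loss is compensated by summing the per-scale estimates.
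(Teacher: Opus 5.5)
The paper does not prove this statement. It quotes \cite[Theorem~1.6]{DLP}, which is stated for metric spaces, and passes to semimetric spaces by identifying points at distance $0$. Your Step~1 is exactly that reduction, it is correct, and it is all the paper needs.

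Your attempt to reprove the Ding--Lee--Peres theorem, however, has a genuine gap at the one place where that theorem has content. In Step~3, the per-scale bound $\Pr[d(f(Z_t),f(Z_0))\ge s]\le Ct\,\E d(f(Z_1),f(Z_0))^2/s^2$ contributes $Ct\,\E d(f(Z_1),f(Z_0))^2$ to $\sum_k 4^k\Pr[d(f(Z_t),f(Z_0))\ge 2^k]$ at \emph{every} dyadic scale $2^k$ above $(t\,\E d(f(Z_1),f(Z_0))^2)^{1/2}$. Summing with weights $s^2$ therefore gives this quantity times the number of such scales the chain can reach. That is a logarithmic loss, not Markov type $2$.

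The difficulty is not the non-padded event. Averaging over the random partitions, i.e.\ using the Hilbert-valued map $x\mapsto\min\{d(x,X\setminus C_k(x)),s\}\,e_{C_k(x)}$, already gives a deterministic threshold embedding at each scale. (Your real-valued $g_k$ cannot tell which cluster the chain ends in, so the cluster label is needed.) What is missing is a tail estimate at each scale whose weighted sum over all scales collapses to a single quantity of size $O(t\,\E d(f(Z_1),f(Z_0))^2)$. One way to get this:
\begin{itemize}
\item Apply a good-$\lambda$ inequality to the forward and backward Naor--Peres--Schramm--Sheffield martingales at scale $2^k$.
\item Compare them with one scale-independent conditional square function and maximal increment, both built from the jumps $d(f(Z_r),f(Z_{r+1}))$. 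Their second moments are $\lesssim t\,\E d(f(Z_1),f(Z_0))^2$.
\item Iterating the good-$\lambda$ bound then makes the sum over $k$ converge.
\end{itemize}
This multiscale step is the heart of \cite{DLP}, whose main theorem is that threshold embeddability into Hilbert space implies Markov type $2$. Nothing in your sketch supplies it.

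Your proposed remedy cannot fill the gap. Markov type $2$ for $(X,d^{1/2})$ is the inequality $\E d(f(Z_t),f(Z_0))\le T^2t\,\E d(f(Z_1),f(Z_0))$. This holds with $T=1$ in every metric space, by the triangle inequality and stationarity, so it carries no information. More generally, Markov type $2$ of $(X,d^{1-\delta})$ only yields Markov type $2-2\delta$ for $(X,d)$. ``Verifying that the snowflake loss is compensated by summing the per-scale estimates'' is just the missing multiscale step again.

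Nor can the snowflake be dropped. The Heisenberg group is doubling, hence of finite Nagata dimension, and it has Markov type $2$. Yet it admits no biLipschitz embedding into $L_1$, hence none into a finite product of $\R$-trees. So no argument that inherits Markov type $2$ from trees through an embedding can prove the statement, and this is not how \cite{DLP} proceeds.
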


\begin{rem}[Semimetrics vs. Metrics]
 \cite[Theorem~1.6]{DLP}  is actually stated for metric spaces and not semimetric spaces. However, the distinction is inconsequential.  Indeed, if $X$ is a semimetric space, we may form the quotient metric space $X/\!\!\sim$ by identifing  points at distance 0 to each other. It is easy to see that $X$ has Markov type $p$ if and only if $X/\!\!\sim$ has Markov type $p$, and $X$ has Nagata dimension $\leq n$
  if and only if $X/\!\sim$ has Nagata dimension $\leq n$.
  The analogous statement holds also for Enflo type $p$.
\end{rem}

\section{Lamplighter spaces, $\TSP$-efficiency, and snowflake embeddings} \label{sec:lamplighter}

\subsection{General definition and the lamplighter space on $\R$}\lb{sec-def-lamp}

Let $(X,d)$ be a metric space and $\P_{<\omega}(X)$ be the collection of all finite subsets of $X$. Set
 $$\Lam(X) := \P_{<\omega}(X)\times X.$$Define the length of the solution of the {\it traveling salesman problem} $\TSP: \Lam(X)^2 \to [0,\infty)$ by
\begin{equation*}
    \TSP((A,x),(B,y)) := \inf\left\{\sum_{i=1}^k d(x_{i-1},x_i): x_0 = x, x_k = y, A\Delta B \sbs \{x_i\}_{i=0}^k\right\}.
\end{equation*}
The quantity $\TSP((A,x),(B,y))$ should be thought of as the shortest distance needed for a salesman to travel through all the points of $A\De B$, given that they must start at $x$ and end at $y$.

Note that for any $x\in X$, $A\in \P_{<\omega}(X)$, we have
\begin{equation*}
    \TSP\big( (A,x), (A\De \{x\}, x)\big) =0,
\end{equation*}
while, obviously, $(A,x) \ne (A\De \{x\}, x)$. Thus $\TSP$ is not a metric on $\Lam(X)$, while it can be verified to be a semimetric. The {\it traveling salesman metric} $d_{\Lam}$ on $\Lam(X)$ is defined by
\begin{equation} \label{eq:dLamdef}
    d_{\Lam}((A,x),(B,y)) := \TSP((A,x),(B,y)) + |A\Delta B|.
\end{equation}
It is easy to see that $d_{\Lam}$ is a metric.

\begin{rem}[Motivation for Choice of Lamplighter Metric] \label{rem:metric} 
There are two natural metrics on the Cayley graph of $\Lam(\bbZ)=\bbZ_2\wr\bbZ$ arising from two natural sets of generators $S_1=\{t,a\}$ or  $S_2=\{t,at\}$, where $t=(\emptyset,1)$ and $a=(\{0\},0)$. It is clear that the metrics induced by sets $S_1$ and $S_2$ are 4-biLipschitz equivalent to each other, and thus the choice of $S_1$ or $S_2$ does not affect the biLipschitz geometry of $\Lam(\bbZ)$, and both are used frequently in the literature.
For example, Bartholdi, Neuhauser, and Woess \cite{BW05,Woe05,BNW} used  the metric induced by $S_2$ in their seminal characterization of $\Lam(\bbZ)$ as a horocyclic product of trees, and Naor and Peres \cite{NP08} used the metric induced by $S_1$ in their proof that $\Lam(\bbZ_n)$ biLipschitzly embeds into $L_1$.

The choice of the set $S_1$ induces that elements $(A,x)$, $(B,y)$ of the graph $\Lam(\bbZ)$ are connected by an edge if and only if (1) $A=B$ and $x,y$ are connected by an edge in $\bbZ$, or (2) $x=y$ and $A\De B=\{x\}=\{y\}$. This approach naturally generalizes to arbitrary graphs $X$ and the induced shortest path metric on $\Lam(X)$ is given precisely by the metric $d_{\Lam(X)}$ defined in \eqref{eq:dLamdef}. As discussed in \cite[Section~2]{GT24}, in this graph metric, which they call the {\it ``lazy metric''}, the lamplighter moves from $x$ to $y$ in $X$ and stops at each vertex of $A\De B$ in order to switch the light on or off at that vertex.

Genevois and Tessera \cite{GT24}, call the metric that arises as an analog of using $S_2$ as the set of generators, the {\it ``diligent metric''}. In this metric the lamplighter passes through each point of $A\De B$ but does not need to stop in order to modify the state of the lamp. The diligent metric is equal to  
$\max\{\TSP,\ro\}$, where, here and in the sequel, $\ro$ denotes the semimetric on $\Lam(X)$ defined by 
\begin{equation*}\lb{defro}
    \ro((A,x),(B,y)) = \begin{cases}
    1 \ &\text{ if  } \ A \neq B,\\
    0 \ &\text{ if  } \ A = B.
    \end{cases}
\end{equation*}

In the present work,  we are interested in biLipschitz embeddings of lamplighters and their snowflakes into finite 
$\ell_\infty$-products of $\R$-trees. Since, clearly, $(\Lam(X),\ro)$ biLipschitz embeds into an $\R$-tree with $2^{|X|}$ branches (i.e. an $\R$-star), all our results about biLipschitz embeddings of either $(\Lam(X),\TSP)$ or its snowflakes into finite $\ell_\infty$-products of $\R$-trees, are   valid {\it verbatim} for the lamplighter space  $\Lam(X)$ with the metric $\max\{\TSP,\ro\}$, simply by adding one more $\R$-tree to the $\ell_\infty$-product (which does not affect any estimates for the constants involved). If, in addition, $X$ is 1-separated, then $\TSP((A,x),(B,y)) \geq |A\Delta B|-1$, which implies that
 
\begin{equation} \label{eq:metric}
   \max\{\TSP,\ro\} \leq d_{\Lam} \leq 3\max\{\TSP,\ro\}.
\end{equation}
\end{rem}

\bigskip

We recall a fundamental result about biLipschitz embeddings of $\Lam(\bbZ)$ into an $\ell_1$-product of two trees, first proved by Stein and Taback \cite[Theorem~9]{ST13}, as part of their study of the metric structure of lamplighter groups, Diestel-Leader graphs, and  horocyclic products of trees, cf. also \cite{BW05,Woe05,BNW}. See \cite[Proposition~2.2]{BMSZ21} for an elegant presentation of  this embedding (see \cite{OR19} for an adaptation of it for $\Lam(\bbZ_n)$).

\begin{theorem} \label{thm:Lam(Z)} 
The metric space $(\Lam(\Z),d_{\Lam})$ admits a $3$-biLipschitz embedding into the $\ell_1$-product $B_\infty \oplus_1B_\infty$, where $B_\infty$ denotes the infinite 3-regular unweighted tree. 
\end{theorem}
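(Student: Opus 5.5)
We construct an explicit embedding of $\Lam(\Z)$ into $B_\infty\oplus_1 B_\infty$, following the horocyclic-product description of Bartholdi--Neuhauser--Woess and Stein--Taback. Write an element as $(A,x)$ with $A\sbs\Z$ finite and $x\in\Z$, and split the configuration at the lamplighter's position: the \emph{left part} $A\cap(-\infty,x)$ and the \emph{right part} $A\cap[x,\infty)$.

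\textbf{The two tree maps.} Realize $B_\infty$ as a tree with a distinguished end $\omega$ and a Busemann (height) function $h$. Its vertices at height $n$ are indexed by finite subsets of $(-\infty,n)$; each vertex has one parent, obtained by forgetting whether $n-1$ belongs to the set, and two children, obtained by deciding whether $n$ belongs to it.
\begin{enumerate}
\item Let $\Phi_1(A,x)$ be the vertex of the first tree at height $x$ indexed by $A\cap(-\infty,x)$.
\item Let $\Phi_2(A,x)$ be the vertex of the second tree, with height $-x$ (the reflection $n\mapsto -n$ applied), indexed by $A\cap[x,\infty)$.
\end{enumerate}
Set $\Phi=(\Phi_1,\Phi_2)$. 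Injectivity is immediate, since $x$ is recovered from the height and $A$ is the union of the two parts.

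\textbf{Upper bound.} We check $\Phi$ on the generators $S_1$.
\begin{itemize}
\item A move $x\to x+1$ changes $\Phi_1$ by one edge (a child step deciding membership of $x$) and $\Phi_2$ by one edge (a parent step).
\item Toggling the lamp at $x$ changes only $\Phi_2$. The two configurations lie at the same height and differ exactly at position $x$, so they are siblings, at distance $2$.
\end{itemize}
Each generator therefore moves $\Phi$ by at most $2$ in the $\ell_1$-sum, and $\Phi$ is $2$-Lipschitz for the word metric. Since $d_{\Lam}$ is that word metric, as noted in Remark~\ref{rem:metric}, this gives the upper bound.

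\textbf{Lower bound (main step).} Fix $(A,x)$ and $(B,y)$, and assume $x\le y$ by symmetry. Let $a=\min(A\Delta B)$ and $b=\max(A\Delta B)$, with the conventions needed when $A\Delta B=\emptyset$. Set $m=\min(a,x)$ and $M=\max(b+1,y)$.
\begin{itemize}
\item In the first tree the two vertices have heights $x$ and $y$. Their index sets agree below $\min(a,x,y)$ and must diverge from there, so their confluent lies at height about $\min(a,x)$ and $d_1\ge (x-m)+(y-m)$, roughly.
\item Likewise in the second tree, $d_2\ge (M-x)+(M-y)$.
\item The counting term $|A\Delta B|$ must also be recovered. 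Points of $A\Delta B$ below $x$ are recorded by $\Phi_1$ and those above by $\Phi_2$, but tree distance only detects where the two index sets first diverge, not how many points they differ in. We use instead that $|A\Delta B|\le M-m$, since $A\Delta B\sbs[m,M]$.
\item The optimal tour goes from $x$ to $m$, then to $M$, then to $y$, and stops at each lamp. Hence
\[
d_{\Lam}\le (x-m)+(M-m)+(M-y)+|A\Delta B|\le C\,(d_1+d_2)
\]
with a small absolute constant $C$.
\end{itemize}

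The main obstacle is the bookkeeping of these confluent heights in the boundary cases where $a$ or $b$ lies between $x$ and $y$, together with tracking constants carefully enough to reach distortion $3$. For this I would follow the computation in \cite[Proposition~2.2]{BMSZ21}, taking $d_{\Lam}$ as the graph metric of $S_1$.
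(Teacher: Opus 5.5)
Your proposal is correct and is the same horocyclic-product (Diestel--Leader) embedding that the paper invokes by citing \cite{ST13} and \cite[Proposition~2.2]{BMSZ21}, apart from a harmless off-by-one: under $n\mapsto -n$ the set $A\cap[x,\infty)$ lands in $(-\infty,-x]$, so $\Phi_2(A,x)$ should sit at height $-x+1$. Your own inequalities already finish the constant bookkeeping you defer, and no case split is needed: with $m=\min(a,x)$ and $M=\max(b+1,y)$ they hold with equality, $d_1+d_2=2(M-m)$, while $A\Delta B\subseteq[m,M-1]$ and your tour give $d_{\Lam}\le 2(M-m)+(M-m)=\tfrac32(d_1+d_2)$, so with the $2$-Lipschitz bound the distortion is at most $3$, including when $a$ or $b$ lies between $x$ and $y$.
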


We will use the following slight generalization of Theorem~\ref{thm:Lam(Z)}.

\begin{theorem} \label{thm:Lam(path)} 
There exists an $\R$-tree $T_\R$ such that the semimetric space $(\Lam(\R),\TSP)$ admits a $3$-biLipschitz embedding into the $\ell_1$-product $T_\R \oplus_1 T_\R$.
\end{theorem}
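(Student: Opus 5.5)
We need to prove Theorem~\ref{thm:Lam(path)}: $(\Lam(\R),\TSP)$ admits a 3-biLipschitz embedding into $T_\R\oplus_1 T_\R$.

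The plan is to mimic the Stein--Taback / Baudier--Motakis--Schlumprecht--Zs\'ak construction behind Theorem~\ref{thm:Lam(Z)}, with the discrete trees replaced by $\R$-trees built from finite subsets of $\R$. The embedding will be $(A,x)\mapsto (f_-(A,x),f_+(A,x))$. Here $f_-$ records the configuration $A\cap(-\infty,x)$ together with the position $x$, and $f_+$ records $A\cap(x,\infty)$ with $x$. The lamp possibly at $x$ itself is irrelevant, since $\TSP$ does not see it.

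\textbf{Step 1: construct $T_\R$.} Take the set of pairs $(F,x)$ with $F$ a finite subset of $(-\infty,x)$; a point at $x$ may be allowed and then identified. Two such pairs $(F,x)$ and $(G,y)$ with $x\le y$ are joined by the path that moves $x$ rightward to $y$, provided $F=G\cap(-\infty,x)$, i.e.\ $G\setminus F\subseteq[x,y)$. The tree structure is horocyclic, like a Bruhat--Tits tree. Moving left is canonical: the lamps to the right of the current position are forgotten. Moving right branches according to which new lamps are turned on. The resulting distance is
\[d_-((F,x),(G,y))=(x-m)+(y-m),\qquad m=\min\bigl(\{x,y\}\cup (F\Delta G)\bigr),\]
with a convention handling points equal to $m$. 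One checks that this is an $\R$-tree metric, because it is a geodesic space with unique arcs. Equivalently, it is an ultrametric-like "last common ancestor" construction: the unique arc goes down to the level $m$, where the configurations agree, and then back up. The tree $T_\R$ for $f_+$ is the mirror image, with $M=\max$ and distance $(M-x)+(M-y)$.

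\textbf{Step 2: the two bounds.} Write $S=\{x,y\}\cup(A\Delta B)$, $m=\min S$ and $M=\max S$. The solution of $\TSP$ on $\R$ is explicit:
\[\TSP((A,x),(B,y))=\min\bigl\{(x-m)+(M-m)+(M-y),\;(M-x)+(M-m)+(y-m)\bigr\}.\]
The sum of the two tree distances is
\[(x-m')+(y-m')+(M'-x)+(M'-y)=2(M'-m'),\]
where $m'$ and $M'$ are computed from the restricted symmetric differences $(A\cap(-\infty,x))\Delta(B\cap(-\infty,y))$ and its mirror. The key check is that $m'=m$ and $M'=M$, up to points lying between $x$ and $y$, which do not affect the bound anyway. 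It then follows that
\[2(M-m)\ \ge\ \TSP\ \ge\ M-m,\]
since any tour must span $[m,M]$ and the first formula gives at most $2(M-m)$. This yields distortion at most $2\le3$.

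\textbf{Main obstacle.} The main obstacle is the bookkeeping in Step 2. I must verify that discrepancies of $A$ and $B$ located between $x$ and $y$, or at $x$ or $y$, are captured correctly by the left and right trees. For instance, a lamp in $(x,y)$ with $x<y$ lies in the "future" of $(A,x)$ but in the "past" of $(B,y)$. So $m'$ may be taken over a slightly different set than $m$, but it is always $\ge m$ and always within $[m,\min(x,y)]$ when it differs. I would handle this by a case analysis on the order of $x$ and $y$ relative to the lamps, mirroring \cite[Proposition~2.2]{BMSZ21}. Keeping a factor-of-3 slack absorbs the boundary cases.
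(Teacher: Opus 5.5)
Your argument is correct and takes a genuinely different route from the paper. The paper never constructs a tree. It starts from the discrete Stein--Taback/BMSZ embedding of $\Lam(\Z)$ into $B_\infty\oplus_1 B_\infty$ and notes that it stays $3$-biLipschitz for $\max\{\TSP,\ro\}$. It then rescales by $2^{-n}$ and takes an ultralimit to embed $\Lam(D)$ for the dyadic rationals $D$; this works because tree-embeddability is a scale-invariant four-point condition and so survives ultralimits. Finally it reaches $\Lam(\R)$ through $(1+\eps)$-embeddings of finite subsets of $\R$ into $D$ and one more ultralimit. Both the constant $3$ and the tree are inherited, non-explicitly, from the discrete case.

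You build the continuum horocyclic trees directly, and the bookkeeping you call the main obstacle is in fact exact. For $x\le y$ one has $F\Delta G=\big((A\Delta B)\cap(-\infty,x)\big)\cup\big(B\cap[x,y)\big)$ and $F'\Delta G'=\big((A\Delta B)\cap(y,\infty)\big)\cup\big(A\cap(x,y]\big)$. Since $x$ (resp. $y$) already lies in the set whose minimum (resp. maximum) is taken, the extra points are invisible, so $m'=m$ and $M'=M$ exactly. On $\R$ one has $\TSP=2(M-m)-|x-y|$. Hence the $\ell_1$-product distance between $f(A,x)$ and $f(B,y)$ equals $2(M-m)=\TSP((A,x),(B,y))+|x-y|$. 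This lies between $\TSP$ and $2\TSP$ because $|x-y|\le\TSP$, so the distortion is $2$.

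You should delete the hedge that $m'$ ``may differ'' and that ``a factor-of-3 slack absorbs the boundary cases.'' If $m'$ could genuinely exceed $m$, no fixed slack would rescue the lower bound, so the proof rests on this exact equality.

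For the $\R$-tree claim, the cleanest check is through the merge level $\mu$, where $d(p,q)=h(p)+h(q)-2\mu(p,q)$ and $h$ is the height. It satisfies $\mu(p,r)\ge\min\{\mu(p,q),\mu(q,r)\}$, which gives the four-point condition; combine this with the explicit down-then-up geodesics.

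Your route gives an explicit $T_\R$, a better constant, and needs no ultrafilters or approximation. The paper's route is shorter because it reuses the published discrete theorem.
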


\begin{proof}
Since the metrics $d_{\Lam}$ and $\max\{\TSP,\ro\}$ coincide on the edges of $\Lam(\Z)$ (with respect to the generating set $S_1$) and since $\max\{\TSP,\ro\}\le d_{\Lam}$ on $\Lam(\Z)$ (due to \eqref{eq:metric}), the embedding of $(\Lam(\Z),d_{\Lam})$ into $B_\infty \oplus_1 B_\infty$, defined in the proof of \cite[Proposition~2.2]{BMSZ21} satisfies the same biLipschitz estimates also for the metric $ \max\{\TSP,\ro\}$. Thus, $(\Lam(\Z), \max\{\TSP,\ro\})$ also admits a 3-biLipschitz embedding into $B_\infty \oplus_1 B_\infty$.

Let $T$ be the $\R$-tree corresponding to $B_\infty$ and $\U$ be a nonprincipal ultrafilter on $\N$. First we show that there is a dense subset $D \sbs \R$ such that $(\Lam(D),\TSP)$ admits a 3-biLipschitz embedding into the $\U$-limit of the sequence of pointed spaces $(2^{-n}(T \oplus_1 T))_{n\in\N}$ (equipped with any basepoint), where $rY$ denotes the rescaling of the metric on a metric space $Y$ by a factor of $r$. Denoting the $\U$-limit of a sequence of pointed metric spaces $(Y_n)_{n\in\N}$ by $\U$-$\lim_{n\to\infty}Y_n$, we will show that $(\Lam(D),\TSP)$ admits a 3-biLipschitz embedding into $\U$-$\lim_{n\to\infty} 2^{-n}(T \oplus_1 T) = \U$-$\lim_{n\to\infty} 2^{-n}T \oplus_1 \U$-$\lim_{n\to\infty} 2^{-n}T$. Since the class of metric spaces isometrically embeddable into $\R$-trees is closed under rescalings and ultralimits (this holds because the property of isometric embeddability into an $\R$-tree is determined by a scale-invariant 4-point inequality, see \cite[Lemma~3.12]{Evans}), this will show that $(\Lam(D),\TSP)$ admits a 3-biLipschitz embedding into the $\ell_1$-product of two $\R$-trees.

Take $D$ to be the dyadic rationals $\{j2^{-n}: j\in\Z,n\in\N\}$. For each $(A,x) \in \Lam(D)$, we can find $n_0\in\N$ large enough so that $(2^nA,2^nx) \in \Lam(\Z)$ for all $n\ge n_0$. Then we define a map from $(\Lam(D),\TSP)$ to $\U$-$\lim_{n\to\infty}(\Lam(\Z),2^{-n}\TSP)$ by sending $(A,x)$ to the $\U$-equivalence class of the sequence $(2^nA,2^nx)_n$. It is easy to see that this map is an isometric  embedding. Further, since $\ro$ is bounded, it follows that $\U$-$\lim_{n\to\infty}(\Lam(\Z),2^{-n}\TSP) = \U$-$\lim_{n\to\infty}(\Lam(\Z),2^{-n}(\max\{\TSP,\ro\}))$. By the first paragraph, each space $(\Lam(\Z),2^{-n}(\max\{\TSP,\ro\}))$ admits a 3-biLipschitz embedding into $2^{-n}(T \oplus_1 T)$, and hence $\U$-$\lim_{n\to\infty}(\Lam(\Z),2^{-n}\TSP)$ admits a 3-biLipschitz embedding into $\U$-$\lim_{n\to\infty}2^{-n}(T\oplus_1 T)$. This completes the construction of the embedding for $\Lam(D)$.

Now, to prove that $(\Lam(\R),\TSP)$ 3-biLipschitz embeds into an $\ell_1$-product of two $\R$-trees, it suffices, by the same ultralimit reasoning of the preceding paragraphs, to show that for every $\eps>0$ and every finite subset $F \sbs \R$, the space $(\Lam(F),\TSP)$ admits a $(3+\eps)$-biLipschitz embedding into the $\ell_1$-product of two $\R$-trees. This easily follows from the previous paragraph and the fact that for every $\eps>0$ and $F \sbs \R$ finite, there is a $(1+\eps)$-biLipschitz embedding of $F$ into $D$.
\end{proof}

\subsection{$\TSP$-efficient metric spaces} \label{sec:TSPefficient}
We define a new semimetric on $\Lam(X)$ as the length of the solution of the {\it traveling salesman shortcut problem} $\TSCP: \Lam(X)^2 \to [0,\infty)$ by
\begin{equation*}
    \TSCP((A,x),(B,y)) := \diam(\{x,y\}\cup (A\Delta B)).
\end{equation*}
The quantity $\TSCP((A,x),(B,y))$ can be thought of as the distance a salesman travels if they are allowed to shortcut the full problem by visiting only the two  points furthest  away from each other in $\{x,y\} \cup (A\Delta B)$. It is easy to verify that $\TSCP$ is a semimetric.

Obviously, the triangle inequality of $d$ implies that $\TSP \geq \TSCP$. Given a constant $K<\infty$, we say that $X$ is {\it $K$-$\TSP$-efficient} if the reverse inequality
\begin{equation*}
    \TSP \leq K\cdot\TSCP
\end{equation*}
holds. We say that $X$ is {\it $\TSP$-efficient} if it is $K$-$\TSP$-efficient for some $K<\infty$. Hence, if   $X$ is $K$-$\TSP$-efficient, then $(\Lam(X),\TSP)$ is $K$-biLipschitz equivalent to $(\Lam(X),\TSCP)$.

\begin{example}[$\TSP$-Efficiency of $\R$ and $\R/\Z$]\label{ex:RTSPefficient}
The line $\R$ and the circle $\R/\Z$, equipped with their geodesic metrics, are $2$-$\TSP$-efficient and $3$-$\TSP$-efficient, respectively. Hence, any subsets of these spaces are also $2$- or $3$-$\TSP$-efficient, respectively.
\end{example}

\begin{example}[$\TSP$-Efficiency of Wedge Sums] \label{ex:wedgesumsTSPefficient} 
If $X_1,\dots X_n$ are $K$-$\TSP$-efficient spaces, then their wedge sum is $nK$-$\TSP$-efficient. In particular, if each $X_i$ is a subset of $\R$ or $\R/\Z$, then their wedge sum is $2n$- or $3n$-$\TSP$-efficient, respectively.
\end{example}

Next we show that $\TSP$-efficient spaces have sharp Assouad dimension $\le 1$.

\begin{lemma} \label{lem:TSPdoubling} 
Let $K<\infty$ and let $(X,d)$ be a $K$-$\TSP$-efficient metric space. Then $X$ has sharp Assouad dimension $\leq 1$ with constant $\leq 2K$. In particular, $X$ is $4K$-doubling.
\end{lemma}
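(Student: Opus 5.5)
Fix $x\in X$, $0<r\le R$, and an $r$-separated set $S\sbs B_R(x)$, and assume $|S|\ge 2$ (the case $|S|\le 1$ is trivial since $2K\ge 1$). I will bound the length of any tour through $S$ from below using separation, and from above using $K$-$\TSP$-efficiency applied to a suitable pair of points in $\Lam(X)$.

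\emph{Upper bound.} Pick any $s_0\in S$ and set $A=S$, $B=\emptyset$. Consider the points $(A,s_0),(B,s_0)\in\Lam(X)$. Then $A\Delta B=S$, so
\[
\TSCP((A,s_0),(B,s_0))=\diam(S)\le 2R .
\]
By $K$-$\TSP$-efficiency, $\TSP((A,s_0),(B,s_0))\le 2KR$. Hence for every $\eta>0$ there is a closed route $s_0=x_0,x_1,\dots,x_k=s_0$ of total length at most $2KR+\eta$ passing through every point of $S$.

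\emph{Lower bound.} List the points of $S$ in the order they are first visited along the route, say $y_1=s_0,y_2,\dots,y_{|S|}$. The route passes successively through $y_1,y_2,\dots,y_{|S|}$ and then returns to $y_1$. By the triangle inequality, its length is at least
\[
\sum_{j=1}^{|S|-1}d(y_j,y_{j+1})+d(y_{|S|},y_1)\ \ge\ |S|\,r,
\]
since $S$ is $r$-separated and $|S|\ge 2$ means every one of these $|S|$ consecutive pairs consists of distinct points. Combining with the upper bound and letting $\eta\to0$ gives $|S|r\le 2KR$, that is,
\[
|S|\le 2K\,\frac{R}{r}.
\]
This is exactly sharp Assouad dimension $\le1$ with constant $\le 2K$. (One could skip closing the loop and get the bound $(|S|-1)r\le 2KR$, but closing the loop avoids the additive $+1$.)

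\emph{Doubling.} Take a ball of radius $\rho$ and an $r$-separated subset of it with $r=\rho/2$. The bound above gives cardinality at most $2K\cdot 2=4K$. By the Zorn's Lemma remark preceding Theorem~\ref{thm:Assouad}, this means every ball of radius $\rho$ can be covered by $4K$ balls of radius $\rho/2$, so $X$ is $4K$-doubling.

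The only delicate point is using a closed tour, which comes from choosing the same basepoint $s_0$ for both lamplighter points. This is what lets the constant come out as exactly $2K$ rather than needing an additive correction.
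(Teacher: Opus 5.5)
Your proof is correct and follows the paper's argument. It applies $K$-$\TSP$-efficiency to a closed tour through $S$, whose $\TSCP$-value is at most $2R$, and then bounds the tour's length below by $|S|r$ using $r$-separation. The only difference is cosmetic: the paper bases the tour at the center $x$ of the ball rather than at a point $s_0\in S$, and your treatment of the infimum (via $\eta$) and of the case $|S|\le 1$ just makes explicit what the paper leaves implicit.
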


\begin{proof}
First note that the second conclusion follows immediately from the first. We now prove the first. Let $x\in X$ and $0<r\leq R < \infty$. Choose an $r$-separated subset $S \sbs B_R(x)$. We will show that $|S| \leq 2K\frac{R}{r}$, which shows that $\gamma \leq 2K$.

Since $X$ is $K$-$\TSP$-efficient and $\diam(S) \leq \diam(B_R(x)) \leq 2R$, there exists a sequence of points $\{x_i\}_{i=0}^k$ such that $ x_0 = x= x_k$, $S \sbs \{x_i\}_{i=0}^k$, and
\begin{equation*}
   \sum_{i=1}^{k} d(x_{i-1},x_{i}) \leq 2KR.
\end{equation*}
Since $S$ is $r$-separated, and $k\ge |S|$, the left-hand sum is lower bounded by $|S|r$. Solving for $|S|$ yields
 $|S| \leq 2K\frac{R}{r}$.
\end{proof}

We will show in Theorem~\ref{thm:main2} that if $X$ has Assouad dimension $< 1$, then $X$ is $\TSP$-efficient. However, there exist metric spaces with sharp Assouad dimension $\le 1$ that are not $\TSP$-efficient. For example, for any $n\in\bbN$, let $\WB_{2,n}$ be the weighted tree obtained by taking the complete binary tree $B_n$ of depth $n$ and assigning the weight $2^{-k}$ to each edge of $B_n$ at distance $k$ from the root. Let $L_n$ denote the set of leaves of $\WB_{2,n}$. It is easy to see that $L_n$ has sharp Assouad dimension $\le 1$. On the other hand, $\TSP((L_n,\text{root}),(\emptyset,\text{root}))\ge 4n$, while $\diam(L_{n})\leq 4$, which implies that $L_{n}$ is not $K$-$\TSP$-efficient for any $K<n$.

\subsection{Hamming cubes in lamplighters on $\TSP$-inefficient  spaces}
\label{S:tsp-ineff}

We show that $\TSP$-inefficiency of $X$ is a sufficient condition for the lamplighter space over $X$ to contain Hamming cubes with uniformly bounded distortion. We will prove later  that this condition is also necessary. 

\begin{theorem} \label{thm:TSPinefficientHamming}
Let $(X,d)$ be a metric space and $K \geq 4$. If $X$ fails to be $K$-$\TSP$-efficient, then the $M$-dimensional Hamming cube 4-biLipschitz embeds into $(\Lam(X),\TSP)$, where $M = \lfloor \frac{K-4}{3} \rfloor + 1$.
\end{theorem}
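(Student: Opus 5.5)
The plan is to extract from a witness of $\TSP$-inefficiency a finite set that splits into $M$ ``blocks'', each expensive to visit, and to send a subset of $\{1,\dots,M\}$ to the configuration whose lamps are lit exactly on the chosen blocks. Since $X$ is not $K$-$\TSP$-efficient, there are $(A,x),(B,y)\in\Lam(X)$ with $\TSP((A,x),(B,y))>K\cdot\TSCP((A,x),(B,y))$. Put $S:=A\Delta B$ and $D:=\diam(\{x,y\}\cup S)$, and rescale so that $D=1$. For $F\sbs S$ write $\tau(F):=\TSP((F,x),(\emptyset,x))$, the length of the shortest closed tour from $x$ through $F$. Then $\tau(S)\geq \TSP((A,x),(B,y))-d(x,y)>K-1$. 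Also, adding one point $z$ to $F$ raises $\tau$ by at most $2$ (detour $x\to z\to x$), and $\tau(F)\le\sum_j\tau(F_j)$ whenever $F=\bigcup_j F_j$, since closed tours from $x$ can be concatenated.

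\emph{Step 1 (blocks).} Enumerate $S$ along a near-optimal route and build consecutive blocks $S_1,S_2,\dots$ greedily: keep adding points to the current block until its closed-tour cost first reaches a threshold $c$ (I will aim for $c=3$, dictated by $M=\lfloor\frac{K-4}{3}\rfloor+1$). Each completed block then satisfies $c\le\tau(S_i)\le c+2$. I will take $M$ blocks and dump all leftover points into a remainder $R$. The counting to check is that at least $M$ blocks are produced, using $\tau(S)>K-1$ together with subadditivity.

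\emph{Step 2 (the map).} Define $\theta(I):=\big(\bigcup_{i\in I}S_i,\,x\big)$ for $I\sbs\{1,\dots,M\}$. Then
\begin{equation*}
\TSP(\theta(I),\theta(J))=\tau\Big(\bigcup_{i\in I\Delta J}S_i\Big).
\end{equation*}
By subadditivity this is at most $(c+2)|I\Delta J|$, which gives the Lipschitz bound.

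\emph{Step 3 (lower bound).} This is the main obstacle, because $\tau$ is not superadditive over blocks. For $I'=I\Delta J\neq\emptyset$ I will argue through the global inefficiency. Concatenating an optimal tour for $\bigcup_{i\in I'}S_i$ with tours for the blocks outside $I'$ and for $R$ yields a route through all of $S$. Hence
\begin{equation*}
K-1<\tau(S)\le\tau\Big(\bigcup_{i\in I'}S_i\Big)+(c+2)(M-|I'|)+\tau(R),
\end{equation*}
so $\tau\big(\bigcup_{i\in I'}S_i\big)\geq K-1-\tau(R)-(c+2)(M-|I'|)$. The delicate point is controlling $\tau(R)$ and tuning $c$ and $M$ so that this bound is at least a fixed fraction of $(c+2)|I'|$. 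If the $M-|I'|$ loss is too large, I will fall back on the trivial bound that any such tour has length $\ge\tau(S_i)\ge c$ for each single $i\in I'$, and combine the two estimates in the regimes of small and large $|I'|$.

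If the bookkeeping does not close directly, I will first try the alternative of choosing the block threshold adaptively: stop forming blocks once the remaining points have small $\tau$, so that $\tau(R)\le c+2$. Together with the choice $c=3$ and $K\ge 4$, this should give distortion $\le 4$.
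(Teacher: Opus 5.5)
Your Step 3 has a genuine gap, and it is not just bookkeeping. The inequality you derive,
\[
\tau\Big(\bigcup_{i\in I'}S_i\Big)\ \ge\ K-1-\tau(R)-(c+2)(M-|I'|),
\]
compares against the lower bound $K-1$ for $\tau(S)$ while charging every block outside $I'$ its own closed tour from $x$. The sum $\sum_i\tau(S_i)+\tau(R)$ can exceed $\tau(S)$ by an amount proportional to the number of blocks. Each separate closed tour pays a round trip to $x$, up to $2$ per block, whereas a single route joins consecutive blocks by short edges. So the right-hand side is negative unless $|I'|$ is a fixed fraction of $M$. With $c=3$ and $M\approx K/3$ you have $(c+2)M\approx 5K/3$, so the bound is vacuous for $|I'|\lesssim 2K/15$. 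In that range your fallback gives only $\tau(\bigcup_{I'}S_i)\ge c$ against the upper bound $(c+2)|I'|$, so the distortion grows with $K$. Making $\tau(R)$ small adaptively does not touch this loss.

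Step 1 also fails as stated. Subadditivity only gives $(c+2)N+c>K-1$, i.e.\ $N>(K-4)/5$ for $c=3$, which is fewer than $M$ blocks; getting $M$ blocks this way would force $c\le 1$. Note too that although you build the blocks along a near-optimal route, Step 3 never uses that route, and the route is exactly what is needed.

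The missing idea is to make the blocks consecutive pieces $C_1,\dots,C_N$ of one \emph{optimal} closed tour $x=x_0,x_1,\dots,x_k=x$ through $S$. Cut by internal length rather than by closed-tour cost: each piece has internal length $\ell_n\le 2D$, but $\ell_n$ plus the following edge $e_n$ exceeds $2D$, and the last piece is discarded. Then $\ell_n+e_n\le 3D$ together with $\sum_n(\ell_n+e_n)=\tau(S)>(K-1)D$ gives $N>(K-1)/3$, hence $M=N-1$ usable pieces. Also $\tau(C_n)\le D+\ell_n+D\le 4D$.

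For the lower bound, compare with the \emph{same} tour. Concatenate an optimal closed tour through $\bigcup_{n\in I'}C_n$ with the original tour from which the pieces in $I'$ have been excised. Bridge each maximal run of consecutive excised pieces by one edge of length $\le D$. The result is a closed tour from $x$ through $S$, so optimality gives
\[
\tau\Big(\bigcup_{n\in I'}C_n\Big)\ \ge\ \sum_{n\in I'}(\ell_n+e_n)-D\cdot\#\{\text{runs of } I'\}\ >\ 2D|I'|-D|I'|\ =\ D|I'| .
\]
Now the complement is charged exactly its share of the optimal tour. The only loss is one bridge per run, and that loss is paid for by the pieces in $I'$ themselves. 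This is what the paper's appeal to minimality of the tour provides, and it yields both the stated $M$ and distortion $4$.
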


\begin{proof}
Assume that $X$ fails to be $K$-$\TSP$-efficient. Let $(A,x),(B,y) \in \Lam(X)$ be such that
\begin{equation}\lb{bigTSP}
    \TSP((A,x),(B,y)) > KR,
\end{equation}
where
\begin{equation*}
    R = \diam(\{x,y\}\cup(A\Delta B)).
\end{equation*}

For brevity, if $C\in \P_{<\omega}(X)$, let us write $\TSP(C,x)$ to mean $\TSP((C,x),(\emptyset,x))$. Note that \eqref{bigTSP} implies
\begin{equation*}
    \TSP(A\Delta B,x) > (K-1)R.
\end{equation*}

Let $\{x_i\}_{i=0}^k \sbs X$ such that $x_0 = x = x_k$, $A\Delta B \subseteq \{x_i\}_{i=1}^k$, and $\sum_{i=1}^k d(x_{i-1},x_i) = \TSP(A\Delta B,x)$. Recursively define a finite sequence of stopping times  $\{\tau(0) < \tau(1) < \dots\}\sbs\{0,\dots k\}$ by
\begin{itemize}
    \item $\tau(0) = 0$
    \item For $n \geq 1$, if $\tau(n-1)$ has been defined and  $\tau(n-1)<k$, then,  since $d(x_{i-1},x_i) \leq R$ for all $i$,  the set $J := \{j \in (\tau(n-1),k]: \sum_{i=\tau(n-1)+1}^j d(x_{i-1},x_i) \leq 2R\}$ is nonempty. Define $\tau(n)=\min\{1+\max J, k\}$.
\end{itemize}
Let $N$ be the maximum value of $n$ for which $\tau(n)$ is defined (so that $\tau(N) = k$), and for each $n\in\{1,\dots N\}$, set $C_n := \{x_i\}_{i=\tau(n-1)}^{\tau(n)-1}$. Then the following facts hold:
\begin{itemize}
    \item[$(a)$] $(K-1)R < \TSP(A\Delta B,x) = \sum_{i=1}^k d(x_{i-1},x_i)$
    
    ~\hspace{.6in} $= \sum_{n=1}^N \Big( d(x_{\tau(n)-1},x_{\tau(n)}) + \sum_{i=\tau(n-1)+1}^{\tau(n)-1} d(x_{i-1},x_i)\Big) \leq \sum_{n=1}^N (R + 2R)$ 
    
 ~\hspace{.6in}  $= 3RN$.

    Hence, $N > \frac{K-1}{3}$.
    \item[$(b)$] $\{C_n\}_{n=1}^{N}$ forms a partition of $\{x_0, x_1,\dots, x_k\}$.
    \item[$(c)$]  For all $n\in\{1,\dots N\}$,
    \begin{equation*}
        \TSP(C_n,x) \leq d(x,x_{\tau(n-1)}) + d(x_{\tau(n)-1},x) + \sum_{i=\tau(n-1)+1}^{\tau(n)-1} d(x_{i-1},x_i) \leq 4R.
    \end{equation*}

    \item[$(d)$] By the minimality of the sum $\sum_{i=1}^k d(x_{i-1},x_i)$,  for all $n\in\{1,\dots N-1\}$, we have
\begin{align*}
    \TSP((C_n,x_{\tau(n-1)}),&(\emptyset, x_{\tau(n)-1})) = \sum_{i=\tau(n-1)+1}^{\tau(n)-1} d(x_{i-1},x_i) \\
    &= \sum_{i=\tau(n-1)+1}^{\tau(n)} d(x_{i-1},x_i) - d(x_{\tau(n)-1},x_{\tau(n)})
    &\geq 2R - R = R.
\end{align*}

Therefore, for any set $S\subseteq \{1,\dots N-1\}$, 
  \begin{equation*}
\begin{split}
     \TSP( \bigcup_{n\in S} C_n,x)&\ge  \sum_{n\in S} \TSP( (C_n,x_{\tau(n-1)}),(\emptyset, x_{\tau(n)-1}))\ge |S|R.
\end{split}
    \end{equation*}
\end{itemize}

Define a map $\theta: \P(\{1,\dots N-1\})\to\Lam(X)$ by $\theta(S) = (\cup_{n\in S}C_n,x)$. Then, by items $(b),(c)$, for any $S,\{n\}\in \P(\{1,\dots N-1\})$, we have
\begin{align*}
    \TSP(\theta(S),\theta(S\Delta\{n\})) = \TSP(C_n,x) \leq 4R.
\end{align*}
And, by items $(b),(d)$, for any $S,T \in \P(\{1,\dots N-1\})$, 
\begin{align*}
    \TSP(\theta(S),\theta(T)) = \TSP( \bigcup_{n\in S\Delta T} C_n,x) \geq |S\De T| R.
\end{align*}
Thus, $\theta$ is a 4-biLipschitz embedding of the $M$-dimensional Hamming cube into  $(\Lam(X),\TSP)$, where $M = N-1>\frac{K-4}{3}$ by item $(a)$. Since $M$ is an integer, this implies $M \geq \lfloor \frac{K-4}{3} \rfloor + 1$.
\end{proof}

\subsection{Key technical result}
\label{sec-weak-emb}

The next theorem about induced maps of $\Lam(X)$ into $\ell_\infty$-product spaces is our key technical result which will enable us to prove our main embedding theorems.

Given a set $X$, an indexed family of metric spaces $\{Y_\la\}_{\la\in \La}$ with basepoints  $\{p_\la\}_\la$, and $q\in [1,\infty]$, we say that a map $f = (f_\la)_{\la\in \La}: X \to \ell_q(\{Y_\la\}_{\la\in \La})$ is {\it componentwise injective} if $f_\la:X\to Y_\la$ is injective for every $\la\in\La$.

\begin{theorem} \label{thm:induced}
Let $(X,d_X)$ and $(Y,d_Y)$ be metric spaces, $L<\infty$, $\Lambda$ an indexing set, and $\varphi: X \to \ell_\infty^\Lambda(Y)$ a componentwise injective $L$-biLipschitz embedding. Then the induced map $\varphi_\#: (\Lam(X),\TSCP) \to \ell_\infty^\Lambda((\Lam(Y),\TSCP))$, defined by 
\begin{equation} \label{eq:induced}
   \varphi_\#(A,x) := \big((\varphi_\lambda(A),\varphi_\lambda(x))\big)_{\la\in\La},
\end{equation}
is an $L$-biLipschitz embedding with $\Lip(\varphi_\#) = \Lip(\varphi)$.
\end{theorem}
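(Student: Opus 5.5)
The plan is to compute both sides directly, using the key fact that componentwise injectivity makes each $\varphi_\la$ commute with symmetric differences: $\varphi_\la(A)\Delta\varphi_\la(B)=\varphi_\la(A\Delta B)$ for all $A,B\in\P_{<\omega}(X)$. Injectivity of $\varphi_\la$ gives this immediately, since images of disjoint sets are disjoint. Hence for every $\la$,
\begin{equation*}
\TSCP\big((\varphi_\la(A),\varphi_\la(x)),(\varphi_\la(B),\varphi_\la(y))\big)=\diam\big(\varphi_\la(\{x,y\}\cup(A\Delta B))\big).
\end{equation*}
Writing $S:=\{x,y\}\cup(A\Delta B)$, a finite subset of $X$, the distance between $\varphi_\#(A,x)$ and $\varphi_\#(B,y)$ in $\ell_\infty^\La((\Lam(Y),\TSCP))$ is therefore
\begin{equation*}
\sup_{\la}\max_{s,t\in S} d_Y(\varphi_\la(s),\varphi_\la(t))=\max_{s,t\in S}\sup_\la d_Y(\varphi_\la(s),\varphi_\la(t))=\max_{s,t\in S}\|\varphi(s)-\varphi(t)\|_\infty .
\end{equation*}
The supremum and maximum can be interchanged because the maximum runs over a finite set. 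This interchange is exactly where the $\ell_\infty$-product is essential: with an $\ell_1$-product the diameters would not combine this way.

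Both bounds then follow at once. For the upper bound, $\max_{s,t\in S}\|\varphi(s)-\varphi(t)\|_\infty\le \Lip(\varphi)\diam(S)=\Lip(\varphi)\TSCP((A,x),(B,y))$, so $\Lip(\varphi_\#)\le\Lip(\varphi)$. For the lower bound, $\|\varphi(s)-\varphi(t)\|_\infty\ge L^{-1}\Lip(\varphi)d_X(s,t)$, and taking the maximum over $s,t\in S$ gives $\varphi_\#$-distance $\ge L^{-1}\Lip(\varphi)\TSCP$. For the equality $\Lip(\varphi_\#)=\Lip(\varphi)$, restrict to pairs $(\emptyset,x),(\emptyset,y)$: there $\TSCP=d_X(x,y)$ and the image distance is $\|\varphi(x)-\varphi(y)\|_\infty$, so $\Lip(\varphi_\#)\ge\Lip(\varphi)$. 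Together these show that $\varphi_\#$ is $L$-biLipschitz with $\Lip(\varphi_\#)=\Lip(\varphi)$.

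One technical point needs checking: that $\varphi_\#$ actually lands in the $\ell_\infty$-product, which requires a choice of basepoint. If $\varphi$ itself maps into $\ell_\infty^\La(Y)$ with basepoint $(p_\la)$, then taking basepoint $(\emptyset,p_\la)$ in each $\Lam(Y)$ gives $\TSCP((\varphi_\la(A),\varphi_\la(x)),(\emptyset,p_\la))=\diam(\{\varphi_\la(x),p_\la\}\cup\varphi_\la(A))$. This is bounded uniformly in $\la$ by $2\sup_\la d_Y(\varphi_\la(x_0),p_\la)+\Lip(\varphi)\diam(\{x,x_0\}\cup A)$ for any fixed $x_0$, so the image lies in the product. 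The only mildly delicate step is the identity $\varphi_\la(A)\Delta\varphi_\la(B)=\varphi_\la(A\Delta B)$, which fails without injectivity (that is why the hypothesis is there); everything else is the sup–max interchange.
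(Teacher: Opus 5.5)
Your proof is correct and follows the paper's argument. In both, componentwise injectivity turns the image distance into $\sup_\la \diam(\varphi_\la(\{x,y\}\cup(A\Delta B)))$; the upper bound uses $\Lip(\varphi_\la)\le\Lip(\varphi)$, the lower bound evaluates at a diameter-realizing pair, and $\Lip(\varphi_\#)\ge\Lip(\varphi)$ holds a fortiori. Your check that $\varphi_\#$ actually lands in the $\ell_\infty$-product (with basepoints $(\emptyset,p_\la)$) is a detail the paper leaves implicit.
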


\begin{proof}
Since $\varphi$ is componentwise injective, we may freely commute images under $\varphi_\lambda$ and set-theoretic operations (like union and symmetric difference) in the ensuing calculations.  First we compute the Lipschitz constant of $\varphi_\#$. For any  $(A,x),(B,y) \in \Lam(X)$ we have:
\begin{equation*}
\begin{split}
    \sup_{\lambda\in\Lambda}\TSCP((\varphi_\lambda(A),\varphi_\lambda(x)),&(\varphi_\lambda(B),\varphi_\lambda(y))) \\
    &= \sup_{\lambda\in\Lambda}\diam(\varphi_\lambda(\{x,y\} \cup (A \Delta B))) \\
    &\leq \sup_{\lambda\in\Lambda}\Lip(\varphi_\lambda)\diam(\{x,y\} \cup (A \Delta B)) \\
    &= \Lip(\varphi)\TSCP((A,x),(B,y)).
\end{split}
\end{equation*}
This proves that $\Lip(\varphi_\#) \leq \Lip(\varphi)$, while the reverse inequality $\Lip(\varphi_\#) \geq \Lip(\varphi)$ holds a fortiori. 

Next we estimate the coLipschitz constant of $\varphi_\#$. Given $(A,x),(B,y) \in \Lam(X)$, 
let $z,z' \in \{x,y\}\cup (A\Delta B)$ such that $d_X(z,z') =\TSCP((A,x),(B,y))$. Then we have
\begin{equation*}
\begin{split}
    \sup_{\lambda\in\Lambda}\TSCP((\varphi_\lambda(A),\varphi_\lambda(x)),&(\varphi_\lambda(B),\varphi_\lambda(y))) \\
    &= \sup_{\lambda\in\Lambda}\diam(\varphi_\lambda(\{x,y\} \cup (A \Delta B))) \\
    &\geq \sup_{\lambda\in\Lambda} d_Y(\varphi_\lambda(z),\varphi_\lambda(z')) \\
    &\geq \sup_{\lambda\in\Lambda} \frac{\Lip(\vf_\la)}{ L} d_X(z,z') \\
    &=  \frac{\Lip(\vf_\#)}{ L}\TSCP((A,x),(B,y)),
\end{split}
\end{equation*}
which ends the proof.
\end{proof}

To apply Theorem~\ref{thm:induced}, we will need the following simple perturbation lemma. We include its standard  proof for completeness.

\begin{lemma} \label{lem:perturbation}
Let $(X,d_X)$ be a finite metric space, $L<\infty$, $m\in\N$, $\|\cdot\|$ a norm on $\R^m$, and $\varphi: X \to (\R^m,\|\cdot\|)$ an $L$-biLipschitz embedding with $ \Lip(\varphi)>0$. Then for every $\eps>0$, there exists a perturbed map $\tilde{\varphi}: X \to (\R^m,\|\cdot\|)$ that is a componentwise injective $(L+\eps)$-biLipschitz embedding.
\end{lemma}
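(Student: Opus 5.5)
We perturb $\varphi$ by a small generic vector-valued function and use the finiteness of $X$ twice. Once, to get a positive lower bound on distances, so that small perturbations change the biLipschitz data only slightly. Once, to see that componentwise injectivity fails only on a finite union of hyperplanes in the space of perturbations.

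Write $\varphi=(\varphi_1,\dots,\varphi_m)$ in the standard coordinates of $\R^m$, and set $c:=\Lip(\varphi)/L>0$. Then $\|\varphi(x)-\varphi(y)\|\ge c\,d_X(x,y)$ for all $x,y$. If $|X|\le 1$ there is nothing to prove, so assume $|X|\ge 2$ and let $\delta_0:=\min_{x\ne y}d_X(x,y)>0$. For a perturbation $\psi:X\to\R^m$, put $\tilde\varphi=\varphi+\psi$, and let $\eta:=\max_{x}\|\psi(x)\|$. Then for $x\neq y$,
\[
\|\tilde\varphi(x)-\tilde\varphi(y)\|\le \|\varphi(x)-\varphi(y)\|+2\eta\le \Big(\Lip(\varphi)+\tfrac{2\eta}{\delta_0}\Big)d_X(x,y),
\]
and similarly $\|\tilde\varphi(x)-\tilde\varphi(y)\|\ge (c-2\eta/\delta_0)\,d_X(x,y)$. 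Hence
\[
\Lip(\tilde\varphi)\le \Lip(\varphi)+\frac{2\eta}{\delta_0},
\]
and the distortion of $\tilde\varphi$ is at most
\[
\frac{\Lip(\varphi)+2\eta/\delta_0}{\Lip(\varphi)/L-2\eta/\delta_0}.
\]
This tends to $L$ as $\eta\to0$, so for $\eta$ small enough it is $\le L+\eps$. In particular $\tilde\varphi$ is then a biLipschitz embedding with $\Lip(\tilde\varphi)>0$.

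It remains to choose $\psi$ with $\|\psi(x)\|\le\eta$ for all $x$ so that each coordinate $\tilde\varphi_i=\varphi_i+\psi_i$ is injective on $X$. View $\psi$ as a point of $\R^{m\times X}$. For each $i\le m$ and each pair $x\ne y$, the condition $\varphi_i(x)+\psi_i(x)=\varphi_i(y)+\psi_i(y)$ defines an affine hyperplane in $\R^{m\times X}$; it is a proper hyperplane because the coefficients of $\psi_i(x)$ and $\psi_i(y)$ are $1$ and $-1$. There are finitely many such hyperplanes. Their union is nowhere dense, so it cannot contain the neighborhood $\{\psi:\max_x\|\psi(x)\|\le\eta\}$ of $0$ (all norms on $\R^m$ are equivalent). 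Picking any $\psi$ in that neighborhood but off all the hyperplanes gives the required $\tilde\varphi$.

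The main point requiring care is the definition of distortion in the paper: it is measured relative to $\Lip(f)$, not to a fixed scale. The estimate therefore has to control $\Lip(\tilde\varphi)$ and the lower bound simultaneously, as done above using $\delta_0$ and the assumption $\Lip(\varphi)>0$. The genericity step is routine.
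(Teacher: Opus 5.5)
Your proof is correct and follows the paper's strategy. You add a small perturbation and use $\min_{x\neq y}d_X(x,y)>0$ together with $\Lip(\varphi)>0$ to control $\Lip(\tilde\varphi)$ and the lower bound simultaneously. The paper fixes explicit parameters $\eps',\delta$ where you let $\eta\to0$, which amounts to the same thing.

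The only real difference is how componentwise injectivity is obtained. The paper uses the explicit rank-one perturbation $\tilde\varphi(x)=\varphi(x)+\psi(x)u$, where $\psi:X\to\R$ is injective and $u$ has all coordinates nonzero. It also needs the gap $r_2=\min_i\min_{w\neq z\in\varphi_i(X)}|w-z|$ so that coordinate values that are already distinct stay distinct. You instead note that the bad perturbations form a finite union of proper affine hyperplanes in $\R^{m\times X}$, so they cannot fill a ball around $0$. Your genericity argument is shorter and avoids $r_2$, while the paper's construction is fully explicit.
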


\begin{proof}
Let $r_1 :=\min_{x\neq y\in X} d_X(x,y) > 0$, $r_2 := \min_{i\leq m}\min_{w\neq z\in \varphi_i(X)} |w-z| > 0$, and $u\in \R^m$ be such that $\|u\| \leq 1$ and $0 < |u_i| \leq 1$ for every $i\leq m$. Let $\e>0$, and define 
$\overline{\e}:=\min\{\e,1\}$, $\e':=\overline{\e} \Lip(\varphi)/(L(L+2))$, and $\de=\min\{r_2/2,\e'r_1\}$.
Choose a function $\psi: X \to \R$ such that $\psi$ is injective and $\max_{x\in X}|\psi(x)| < \delta/2$. Define the perturbation $\tilde{\varphi}: X \to (\R^m,\|\cdot\|)$ by
$$\tilde{\varphi}(x) := \varphi(x) + \psi(x)u.$$
Then the following facts hold:

\begin{itemize}
    \item For every $1\le i\le m$ and every $x\neq y\in X$,
    \begin{itemize}
        \item if $\varphi_i(x) = \varphi_i(y)$, then $\tilde{\varphi}_i(x) - \tilde{\varphi}_i(y) = (\psi(x)-\psi(y))u_i \neq 0$, and
        \item since  $\delta \le r_2/2$,  if $\varphi_i(x) \neq \varphi_i(y)$ then $|\tilde{\varphi}_i(x) - \tilde{\varphi}_i(y)| \geq r_2 - \delta > 0$.
    \end{itemize}
  \end{itemize}
     
    \noindent
    Hence, $\tilde{\varphi}$ is componentwise injective. Further, since $\delta \le r_1\eps'$, we have
\begin{itemize}
    \item $\Lip(\tilde{\varphi}) \leq \Lip(\varphi) + \delta/r_1 \le \Lip(\varphi)+\eps'$.
    
    \item For every $x\neq y \in X$, 
    
    $\|\tilde{\varphi}(x)-\tilde{\varphi}(y)\| \geq \frac{\Lip(\varphi)}{L}d_X(x,y) - \delta \ge (\frac{\Lip(\varphi)}{L}-\eps')d_X(x,y)\ge \frac{ \Lip(\varphi)+\eps'}{L+\e}d_X(x,y)$.
\end{itemize}
Thus $\tilde{\varphi}$ is an $(L+\eps)$-biLipschitz embedding.
\end{proof}

\subsection{Embeddings}\lb{S:emb}

As a consequence of Theorem~\ref{thm:induced} we obtain our two main embedding results.

\begin{cor} \label{cor:induced1}
Let $m\in\N$, $L<\infty$, and $(X,d)$ be a 
finite metric space. If $X$ admits an $L$-biLipschitz embedding into $\ell_\infty^m$, then for all $\eps > 0$, $(\Lam(X),\TSCP)$ admits a $(12L+\eps)$-biLipschitz embedding into a product of $2m$ $\R$-trees $\ell_\infty^{2m}(T_{\R})$, where $T_\R$ is the $\R$-tree satisfying the conclusion of Theorem~\ref{thm:Lam(path)}.
\end{cor}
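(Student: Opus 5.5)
The plan is to factor the desired embedding into three maps whose distortions multiply: a perturbation step, the induced map of Theorem~\ref{thm:induced}, and a fixed embedding of the one-dimensional lamplighter into a product of two $\R$-trees applied in every coordinate. If $|X|\le 1$ there is nothing to prove, so assume $|X|\ge 2$, so that any biLipschitz $\varphi$ has $\Lip(\varphi)>0$.

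\emph{Step 1 (perturbation and induced map).} Let $\varphi:X\to\ell_\infty^m$ be $L$-biLipschitz and fix $\eps>0$. Since $X$ is finite, Lemma~\ref{lem:perturbation} (with the $\ell_\infty$ norm on $\R^m$ and $\eps/12$ in place of $\eps$) gives a componentwise injective $(L+\eps/12)$-biLipschitz embedding $\tilde\varphi:X\to\ell_\infty^m=\ell_\infty^m(\R)$. Theorem~\ref{thm:induced}, applied with $Y=\R$ and $\Lambda=\{1,\dots,m\}$, then gives an $(L+\eps/12)$-biLipschitz embedding
\[
\tilde\varphi_\#:(\Lam(X),\TSCP)\to\ell_\infty^m\big((\Lam(\R),\TSCP)\big).
\]

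\emph{Step 2 (one coordinate).} Next I build a single map $g:(\Lam(\R),\TSCP)\to\ell_\infty^2(T_\R)$ with distortion at most $12$.
\begin{itemize}
\item By Example~\ref{ex:RTSPefficient}, $\R$ is $2$-$\TSP$-efficient. Hence $\TSCP\le\TSP\le 2\,\TSCP$ on $\Lam(\R)$, so the identity map from $(\Lam(\R),\TSCP)$ to $(\Lam(\R),\TSP)$ is $2$-biLipschitz.
\item Theorem~\ref{thm:Lam(path)} supplies a $3$-biLipschitz map from $(\Lam(\R),\TSP)$ into $T_\R\oplus_1T_\R$.
\item The identity $T_\R\oplus_1 T_\R\to\ell_\infty^2(T_\R)$ is $2$-biLipschitz, since $\max\{a,b\}\le a+b\le 2\max\{a,b\}$.
\end{itemize}
Composing these gives $g$ with distortion at most $2\cdot3\cdot2=12$. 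All spaces here are semimetric, but the definition of distortion is insensitive to this.

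\emph{Step 3 (assembly).} Define $G:\ell_\infty^m((\Lam(\R),\TSCP))\to\ell_\infty^m(\ell_\infty^2(T_\R))=\ell_\infty^{2m}(T_\R)$ by applying $g$ in each coordinate. Because the same $g$ is used everywhere, every coordinate has the same Lipschitz constant $\Lip(g)$ and the same lower bound $\Lip(g)/12$. Taking suprema over coordinates shows $G$ is $12$-biLipschitz with $\Lip(G)=\Lip(g)$. Since $m$ is finite, basepoints play no role.

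Then $G\circ\tilde\varphi_\#$ has distortion at most $12(L+\eps/12)=12L+\eps$. The one point needing care is this uniformity across coordinates: distortion multiplies under composition, but an $\ell_\infty$-product of maps preserves distortion only if the coordinate maps share a common Lipschitz normalization, and using a single fixed $g$ guarantees this. Beyond that, the argument is just bookkeeping of the constants.
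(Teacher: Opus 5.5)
Your proposal is correct and follows the paper's proof essentially step for step: perturb via Lemma~\ref{lem:perturbation} with $\eps/12$, apply Theorem~\ref{thm:induced} with $Y=\R$, then lose a factor $2$ passing from $\TSCP$ to $\TSP$ on $\Lam(\R)$, a factor $3$ from Theorem~\ref{thm:Lam(path)}, and a factor $2$ comparing $T_\R\oplus_1 T_\R$ with $\ell_\infty^2(T_\R)$, for a total distortion of $12(L+\eps/12)=12L+\eps$. The only difference is presentational: you also make explicit two points the paper leaves implicit, namely the degenerate case $|X|\le 1$ and the use of one fixed coordinate map $g$ so that the $\ell_\infty$-product of maps keeps distortion $12$.
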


\begin{proof}
Assume that $X$ admits an $L$-biLipschitz embedding into $\ell_\infty^m$. Fix $\e>0$ and let $\e'=\e/12$. By Lemma~\ref{lem:perturbation}, if  there exists an $L$-biLipschitz embedding of $X$ into $\ell_\infty^m$, then there exists a componentwise injective $(L+\e')$-biLipschitz embedding  of $X$  into $\ell_\infty^m$. Thus, by Theorem~\ref{thm:induced}, there exists an $(L+\e')$-biLipschitz embedding of $(\Lam(X),\TSCP)$ into $\ell_\infty^m(\Lam(\R),\TSCP)$. Since $\R$ is $2$-$\TSP$-efficient, this implies that there exists a $2(L+\e')$-biLipschitz embedding of $(\Lam(X),\TSCP)$ into $\ell_\infty^m(\Lam(\R),\TSP)$. By Theorem~\ref{thm:Lam(path)}, $\ell_\infty^m(\Lam(\R),\TSP)$ 6-biLipschitz embeds into $\ell_\infty^{2m}(T_{\R})$, and thus there exists a $12(L+\e')$-biLipschitz embedding of $(\Lam(X),\TSCP)$ into $\ell_\infty^{2m}(T_{\R})$, which ends the proof of the corollary.
\end{proof}

Since every $K$-$\TSP$-efficient metric space is $4K$-doubling, by the Assouad Embedding Theorem~\ref{thm:Assouad}, every snowflake of a $K$-$\TSP$-efficient space biLipschitz embeds into a finite dimensional space. The simple structure of the semimetric $\TSCP$ enables us to use this fact and Corollary~\ref{cor:induced1} to obtain biLipschitz embeddings of the snowflakes of lamplighters on $\TSP$-efficient spaces. Precisely, we have:

\begin{theorem} \label{thm:TSP-efficient}
For every $K<\infty$ and $\de\in(0,1)$ there exist $m=m(K,\delta)\in\N$ and $L=L(K,\de)\in(1,\infty) $ such that for every finite $K$-$\TSP$-efficient metric space $(X,d)$, the snowflake of the lamplighter space $(\Lam(X),\TSP^{1-\delta})$ admits an $L$-biLipschitz embedding  into a product of $m$ $\R$-trees $\ell_\infty^{m}(T_{\R})$, where $T_\R$ is the $\R$-tree satisfying the conclusion of Theorem~\ref{thm:Lam(path)}. Moreover, if $\delta < \frac{1}{2}$, then $m$ can be taken independent of $\delta$.
\end{theorem}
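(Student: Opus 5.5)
The plan is to combine $\TSP$-efficiency, the snowflake identity for $\TSCP$, the Assouad Embedding Theorem~\ref{thm:Assouad}, and Corollary~\ref{cor:induced1}.

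First, since $X$ is $K$-$\TSP$-efficient, we have $\TSCP\le\TSP\le K\,\TSCP$ on $\Lam(X)$. Raising to the power $1-\delta$, the identity map $(\Lam(X),\TSP^{1-\delta})\to(\Lam(X),\TSCP^{1-\delta})$ is $K^{1-\delta}\le K$-biLipschitz. It therefore suffices to embed $(\Lam(X),\TSCP^{1-\delta})$.

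Second, I would use the key observation that snowflaking commutes with $\TSCP$:
\[
\TSCP_d^{1-\delta}=\big(\diam_d(\{x,y\}\cup(A\Delta B))\big)^{1-\delta}=\diam_{d^{1-\delta}}(\{x,y\}\cup(A\Delta B)),
\]
because $t\mapsto t^{1-\delta}$ is increasing. Hence
\[
(\Lam(X),\TSCP^{1-\delta})=(\Lam(X,d^{1-\delta}),\TSCP).
\]

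Third, by Lemma~\ref{lem:TSPdoubling}, $X$ is $4K$-doubling. The Assouad Embedding Theorem~\ref{thm:Assouad}, applied with $4K$ and $\delta$, then gives $m_0=m_0(4K,\delta)$ and $D=D(4K,\delta)$ such that $(X,d^{1-\delta})$ admits a $D$-biLipschitz embedding into $\ell_\infty^{m_0}$. Moreover, $m_0$ is independent of $\delta$ when $\delta<\tfrac12$. The space $(X,d^{1-\delta})$ is a finite metric space, so Corollary~\ref{cor:induced1} applies with $\eps=1$. It yields a $(12D+1)$-biLipschitz embedding of $(\Lam(X,d^{1-\delta}),\TSCP)$ into $\ell_\infty^{2m_0}(T_\R)$.

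Composing with the first step gives an $L$-biLipschitz embedding of $(\Lam(X),\TSP^{1-\delta})$ into $\ell_\infty^{m}(T_\R)$, where $L=K(12D+1)$ and $m=2m_0$. Both constants depend only on $K$ and $\delta$, and $m$ is independent of $\delta$ for $\delta<\tfrac12$.

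The main obstacle is conceptual rather than technical: one must not snowflake $\TSP$ directly, since $\TSP_{d}^{1-\delta}$ is not the $\TSP$ of $(X,d^{1-\delta})$. Passing through $\TSCP$ is what lets the snowflake move onto $X$, where Assouad's theorem applies. A minor point to check is that Corollary~\ref{cor:induced1} only requires $X$ to be finite, which holds by hypothesis. This is also why the theorem is stated for finite $X$, with the general case deferred to later limiting arguments such as Remark~\ref{rmk:Nagata}.
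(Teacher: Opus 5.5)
Your proposal is correct and follows the paper's proof essentially step for step: the $K^{1-\delta}$-equivalence of $\TSP^{1-\delta}$ and $\TSCP^{1-\delta}$, the identity $(\Lam(X),\TSCP^{1-\delta})=(\Lam(X,d^{1-\delta}),\TSCP)$, Lemma~\ref{lem:TSPdoubling} combined with Assouad's theorem, and then Corollary~\ref{cor:induced1}, giving distortion $K^{1-\delta}(12D+\eps)$ into $\ell_\infty^{2\overline{m}}(T_\R)$. Your choices $\eps=1$ and the bound $K^{1-\delta}\le K$ are harmless simplifications.
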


\begin{proof}
Let $K<\infty$, $\delta\in(0,1)$, and $(X,d)$ be a $K$-$\TSP$-efficient metric space. Then the spaces $(\Lam(X),\TSP^{1-\delta})$ and $(\Lam(X),\TSCP^{1-\delta})$ are $K^{1-\de}$-equivalent. Due to the simple form of the formula for the semimetric $\TSCP$, we have that $(\Lam(X),\TSCP^{1-\delta})=(\Lam(X,d^{1-\delta}),\TSCP)$. 

By Lemma~\ref{lem:TSPdoubling}, the metric space $(X,d)$ is $4K$-doubling, and thus, by Assouad's Embedding Theorem~\ref{thm:Assouad}, there exist $\overline{m}=\overline{m}(K,\delta)\in\N$ (independent of $\delta$ if $\delta < \frac{1}{2}$) and $D=D(K,\de)\in(1,\infty)$ such that the snowflake space $(X,d^{1-\de})$ admits a $D$-biLipschitz embedding into the $\overline{m}$-dimensional space $\ell_\infty^{\overline{m}}$. Hence, by Corollary~\ref{cor:induced1}, for all $\eps > 0$, $(\Lam(X,d^{1-\de}),\TSCP)$ admits a $(12D+\eps)$-biLipschitz embedding into a product of $2\overline{m}$ $\R$-trees $\ell_\infty^{2\overline{m}}(T_{\R})$.

By these two paragraphs, $(\Lam(X),\TSP^{1-\delta})$ admits a $K^{1-\delta}(12D+\eps)$-biLipschitz embedding into $\ell_\infty^{2\overline{m}}(T_{\R})$.
\end{proof}

\subsection{The main characterization}\lb{S:main}

We now arrive at the  main result of the article, a corollary of the results that we've established above, which characterizes those metric spaces $X$ whose lamplighter spaces have nontrivial nonlinear type or  good embeddability properties as exactly those that are $\TSP$-efficient.

\begin{theorem} \label{thm:main}
Let $X$ be a metric space. Equip $\Lam(X)$ with the semimetric $\TSP$. Then the following are equivalent.
\begin{enumerate}[label=(\roman*)]
    \item $X$ is $\TSP$-efficient.
    
    \item Every snowflake of $\Lam(X)$ biLipschitz embeds into a finite product of $\R$-trees.
   
    \item Every snowflake of $\Lam(X)$ biLipschitz embeds into a Hilbert space.
   
    \item $\Lam(X)$ has finite Nagata dimension.
    
    \item $\Lam(X)$ has Markov type 2.
   
    \item $\Lam(X)$ has nontrivial Enflo type.
    
    \item $\Lam(X)$ does not contain the Hamming cubes with uniformly bounded biLipschitz distortion.
\end{enumerate}
\end{theorem}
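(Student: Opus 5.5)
I will prove the cycle of implications $(i)\Rightarrow(ii)\Rightarrow(iv)\Rightarrow(v)\Rightarrow(vi)\Rightarrow(vii)\Rightarrow(i)$, and attach (iii) to the cycle separately via $(ii)\Rightarrow(iii)\Rightarrow(vii)$.

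\emph{$(i)\Rightarrow(ii)$.} Suppose $X$ is $K$-$\TSP$-efficient and fix $\delta\in(0,1)$. Every finite subset $F\sbs X$ is again $K$-$\TSP$-efficient, because the $\TSP$ and $\TSCP$ semimetrics on $\Lam(F)$ are at least as large, respectively equal, to their restrictions from $\Lam(X)$. Theorem~\ref{thm:TSP-efficient} then gives $L$-biLipschitz embeddings of $(\Lam(F),\TSP^{1-\delta})$ into $\ell_\infty^m(T_\R)$, with $m$ and $L$ independent of $F$. To pass from the finite subsets to all of $\Lam(X)$, I would use an ultralimit argument. Choose a cofinal net of finite sets $F$, fix basepoints, and take the ultralimit of the embeddings. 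The target $\ell_\infty^m$ of ultralimits of $T_\R$ is again a finite product of $\R$-trees, since ultralimits of $\R$-trees are $\R$-trees by the 4-point condition, as in the proof of Theorem~\ref{thm:Lam(path)}. The ultralimit of the embeddings is an $L$-biLipschitz embedding of all of $\Lam(X)$, because every pair of points of $\Lam(X)$ lies in $\Lam(F)$ for some finite $F$.

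\emph{The remaining forward implications.} For $(ii)\Rightarrow(iv)$, apply the second part of Theorem~\ref{thm:LStrees} to a single snowflake exponent, taking $q=\infty$. For $(iv)\Rightarrow(v)$, use Theorem~\ref{thm:DLP}. For $(v)\Rightarrow(vi)$, use that Markov type $2$ implies Enflo type $2$ \cite{NS}. For $(vi)\Rightarrow(vii)$, plug a $C$-biLipschitz Hamming cube into the definition~\eqref{Enflotype} of Enflo type: the left side scales like $2^M M^p$ and the right side like $M2^M$, which is a contradiction for large $M$ since $p>1$. For $(ii)\Rightarrow(iii)$, embed a snowflake $(\Lam(X),\TSP^{1-\delta})$ into a finite product of $\R$-trees using the snowflake exponent $1-\delta/2$. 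The remaining factor $\sqrt{}$-type snowflake of an $\R$-tree embeds in Hilbert space, since $\R$-trees are of negative type, i.e. $d^{1/2}$ is Hilbertian, and a power $d^{s}$ with $s\le 1/2$ is still Hilbertian. More simply, $(1-\delta)=(1-\delta')\cdot\theta$ can be arranged with $\theta\le 1/2$ only if $\delta\ge 1/2$. For small $\delta$ I would instead invoke that snowflakes of finite-dimensional Nagata/Markov type 2 doubling-free spaces work. Alternatively, route (iii) as $(iii)\Rightarrow(vii)$ and $(i)\Rightarrow(iii)$ via $(ii)$ plus the fact that every snowflake $d^{\theta}$, $\theta<1$, of an $\R$-tree embeds biLipschitzly into Hilbert space. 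For $(iii)\Rightarrow(vii)$, note that a Hilbert space has Enflo type 2. Hamming cubes $C$-embedded in $\Lam(X)$ give Hamming cubes with the snowflaked metric $|S\Delta T|^{1-\delta}$ in Hilbert space, which violates Enflo type 2 by the same counting, since $2(1-\delta)>1$ for $\delta<1/2$.

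\emph{$(vii)\Rightarrow(i)$.} I argue by contrapositive. If $X$ is not $\TSP$-efficient, then it fails to be $K$-$\TSP$-efficient for every $K$. Theorem~\ref{thm:TSPinefficientHamming} then gives $4$-biLipschitz embeddings of Hamming cubes of every dimension $M$, which is the negation of (vii).

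The main obstacle is the step involving (iii), namely the Hilbert embeddability of products of snowflaked $\R$-trees for arbitrary $\delta$. I would first try using the known result that every snowflake of an $\R$-tree biLipschitz embeds into Hilbert space, applied factorwise after writing the given snowflake as a composition of snowflakes. Failing that, I would cite the general fact that snowflakes of spaces with finite Nagata dimension embed into Hilbert space, which requires more care.
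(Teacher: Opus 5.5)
Your proposal is essentially the paper's proof. It uses the same implication graph, with (iii) attached via $(ii)\Rightarrow(iii)$ and a type argument. The paper proves $(iii)\Rightarrow(vi)$ by noting that Enflo type $p$ for $(Y,d^{1-\delta})$ is Enflo type $(1-\delta)p$ for $(Y,d)$; this is the same computation as your direct counting argument for $(iii)\Rightarrow(vii)$.

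The step you flag as the main obstacle is handled exactly by your first option, which is what the paper cites. By \cite[Theorem~5.1]{LMM05}, every snowflake of a finite weighted tree embeds into $\ell_2$ with distortion depending only on the exponent. So write $\TSP^{1-\delta}=(\TSP^{\sqrt{1-\delta}})^{\sqrt{1-\delta}}$, embed $(\Lam(X),\TSP^{\sqrt{1-\delta}})$ into $\ell_\infty^m(T)$ by (ii), and snowflake each factor. Then use that $\ell_\infty^m(\ell_2)$ is $\sqrt{m}$-biLipschitz equivalent to $\ell_2$ to get (iii). You can drop the negative-type and ``doubling-free'' detours.

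There is one small slip in $(i)\Rightarrow(ii)$. To conclude that a finite $F\sbs X$ is $K$-$\TSP$-efficient, you need $\TSP$ on $\Lam(F)$ to be \emph{at most} its restriction from $\Lam(X)$, not at least. This does hold, in fact with equality: by the triangle inequality, an optimal tour can be shortcut to visit only points of $\{x,y\}\cup(A\Delta B)\sbs F$.
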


Before beginning the proof, let us observe that each of the properties concerned in $(i)$-$(vi)$ is finitely  determined, and hence, in order to prove the equivalence of $(i)$-$(vi)$ in Theorem~\ref{thm:main}, it suffices to assume that $X$ is finite as long as the constants do not depend on $|X|$. More specifically, note that each of the properties concerned in $(i)$-$(vi)$ has one or more implicit constants (for $(i)$ it is the $\TSP$-efficiency constant $K$, for $(ii)$ it is the biLipschitz distortion $L$ and the number of trees $m$, etc.), and the whole space $X$ or $\Lam(X)$ has the property with a given implicit constant if and only if every finite subset of $X$ or $\Lam(X)$ has the property with the same implicit constant. For the properties of $\TSP$-efficiency, Markov type $2$, and Enflo type 2, finite determination is manifestly true by their definitions. For finite Nagata dimension, finite determination is true by Remark~\ref{rmk:Nagata}. For biLipschitz embeddings into a finite product of $\R$-trees or into a Hilbert space, finite determination holds due to an ultralimit argument: if every finite subset of a semimetric space admits an $L$-biLipschitz embedding into $Y$, where $Y$ is a Hilbert space or a finite product of $\R$-trees, then the whole semimetric space admits an $L$-biLipschitz embedding into an ultralimit of rescalings of $Y$, which is still a Hilbert space or a finite product of $\R$-trees, respectively. Also note that every finite subset of $\Lam(X)$ is contained in $\Lam(Y)$ for some finite subset $Y\sbs X$. Thus, if we wanted to prove, for example, that $\Lam(X)$ having Markov type 2 implies that $(\Lam(X),\TSP^{1-\delta})$ admits a biLipschitz embedding into a finite product of $\R$-trees, it suffices to assume that $X$ is finite and has Markov type $2$ with constant $T<\infty$, and prove that $(\Lam(X),\TSP^{1-\delta})$ admits an $L$-biLipschitz embedding into an $\ell_\infty$-product of $m$ $\R$-trees, where $m\in\N$ and $L<\infty$ depend only on $T$. Analogous statements hold for proving any of the implications in $(i)$-$(vii)$.

\begin{proof}[Proof of Theorem~\ref{thm:main}]
As explained in the preceding paragraph, we may assume that $X$ is finite in the next sentence. The implication $(i)\implies(ii)$ is Theorem~\ref{thm:TSP-efficient}, $(ii)\implies(iii)$ holds since every finite subset of an $\R$-tree isometrically embeds into a graph-theoretic tree, every snowflake of a graph-theoretic tree biLipschitz embeds into $\ell_2$ (with distortion depending only on the snowflake exponent) by \cite[Theorem~5.1]{LMM05}, and a finite product of $\ell_2$'s is biLipschitz equivalent to $\ell_2$, $(ii)\implies(iv)$ is Theorem~\ref{thm:LStrees}, $(iv)\implies(v)$ is Theorem~\ref{thm:DLP}, $(v)\implies(vi)$ is \cite[Proposition~1]{NS}, $(iii)\implies(vi)$ follows from the facts that $(Y,d)$ has Enflo type $(1-\delta)p$ whenever $(Y,d^{1-\delta})$ has Enflo type $p$ and that Hilbert spaces has Enflo type 2 (\cite{Enflo}, see also \cite[(4.31)]{Ost13}).

In the final two implication, we no longer assume that $X$ is finite. First, $(vi)\implies(vii)$ follows directly from the definitions (cf. \cite[inequality (34)]{NaorRibe}), and finally $(vii)\implies(i)$ is Theorem~\ref{thm:TSPinefficientHamming}.
\end{proof}

\begin{rem} \lb{rem:Zsquare}
Since the Assouad dimension of $\Z^2$ is $2>1$, by Lemma~\ref{lem:TSPdoubling}, it is not $\TSP$-efficient. Hence, by Theorem~\ref{thm:main}, $(\Lam(\Z^2),\TSP)$ contains the Hamming cubes with uniformly bounded distortion. Since $\Z^2$ and the Hamming cubes are 1-separated, it is easy to see (using, for example, \eqref{eq:metric}) that $(\Lam(\Z^2),d_{\Lam})$ (which is  the wreath product $\Z_2 \wr \Z^2$) also contains the Hamming cubes with uniformly bounded distortion. To the best of our knowledge, this result is new.
\end{rem}

\section{Finite dimensional  $\TSP$-efficient  metric spaces} \label{sec4}

As mentioned in the Introduction, it is possible that every $\TSP$-efficient metric space $(X,d)$ itself, not just its snowflakes, can be biLipschitz embedded into a finite dimensional space $\R^m$ for some $m\in\N$ that depends only on the constant of $\TSP$-efficiency, see Problem~\ref{problem-TSP-fd}. For such spaces, by Theorem~\ref{thm:induced} and Corollary~\ref{cor:induced1} we obtain:

\begin{cor} \label{cor:induced1-2}
Let $(X,d)$ be a finite metric space that is $K$-$\TSP$-efficient for some $K<\infty$ and such that $X$ admits an $L$-biLipschitz embedding into $\ell_\infty^m$ for some $m\in\N$ and $L<\infty$. Then
\begin{itemize}
\item[(a)]  for all $\eps > 0$, $(\Lam(X),\TSP)$ admits a $(12KL+\eps)$-biLipschitz embedding into an $\ell_\infty$-product of $2m$ $\R$-trees $\ell_\infty^{2m}(T_{\R})$, where $T_\R$ is the $\R$-tree satisfying the conclusion of Theorem~\ref{thm:Lam(path)},  
\item[(b)]
if, in addition, $(X,d)$ is 1-separated, then, for all $\eps > 0$, the metric  lamplighter space $(\Lam(X),d_{\Lam})$ admits a $(36KL+\eps)$-biLipschitz embedding 
\begin{itemize}
\item[(b1)] into a product of $2m$ trees $\ell_\infty^{2m}(B_\infty)$, where $B_\infty$ denotes the infinite 3-regular unweighted tree,
\item[(b2)]  into every nonsuperreflexive Banach space.
\end{itemize}
\end{itemize}
\end{cor}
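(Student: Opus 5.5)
Part (a) follows by composing Corollary~\ref{cor:induced1} with the $\TSP$-efficiency of $X$. Since $X$ is $K$-$\TSP$-efficient, $\TSCP\le\TSP\le K\cdot\TSCP$ on $\Lam(X)$, so the identity map $(\Lam(X),\TSP)\to(\Lam(X),\TSCP)$ is $K$-biLipschitz. By Corollary~\ref{cor:induced1}, for every $\eps'>0$ the space $(\Lam(X),\TSCP)$ admits a $(12L+\eps')$-biLipschitz embedding into $\ell_\infty^{2m}(T_\R)$. Composing the two maps gives distortion at most $K(12L+\eps')$. Choosing $\eps'=\eps/K$ yields the bound $12KL+\eps$ claimed in (a).

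For (b), I would first pass from $\TSP$ to the genuine metric $d_\Lam$. Because $X$ is $1$-separated, \eqref{eq:metric} gives $\max\{\TSP,\ro\}\le d_\Lam\le 3\max\{\TSP,\ro\}$. As noted in Remark~\ref{rem:metric}, $(\Lam(X),\ro)$ embeds isometrically into an $\R$-star. An $\R$-star is itself an $\R$-tree, so I would wedge it onto one of the existing factors; the alternative of adding an extra coordinate would change the count $2m$. The map given by the max of the two embeddings then sends $\max\{\TSP,\ro\}$ into an $\ell_\infty$-product of $2m$ $\R$-trees with no loss beyond the $12KL+\eps$ from (a). Combining with the factor $3$ from \eqref{eq:metric} gives distortion $36KL+\eps$.

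The delicate point is merging the $\ro$-coordinate into an existing tree coordinate while keeping the Lipschitz and coLipschitz estimates. The cleanest route I would try is to replace one tree factor $T$ by the $\ell_\infty$-product $T\oplus_\infty S$, where $S$ is the $\R$-star. This product is not a tree, so some care is needed. A concrete choice is to rescale and glue $S$ at a basepoint of $T$ so that distances add. For each finite piece this can be realized inside an $\R$-tree, because $\ro$ takes only the values $0$ and $1$, and on the $\{0,1\}$ scale adding it costs at most a factor of $2$, which can be absorbed into the constant. If this gluing cannot be carried out cleanly, I would accept using $2m+1$ trees and argue that the statement only needs a finite product.

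Next, to land in $B_\infty$ rather than $\R$-trees, I would use that $X$ is finite and $1$-separated. By the proof of Theorem~\ref{thm:Lam(path)}, the relevant embeddings come from $\Lam(\Z)$ into $B_\infty\oplus_1B_\infty$. Since $\Lam(X)$ is finite, its image lies in a finite subtree of the $\R$-tree. Every finite subset of an $\R$-tree embeds isometrically into a finite weighted graph-theoretic tree. On the scale $\ge1$, a weighted finite tree embeds into $B_\infty$ with bounded distortion, by subdividing edges and rounding weights; the rounding error is controlled because $d_\Lam\ge1$ for distinct points. Keeping this rounding within $\eps$ is the step I expect to need the most care. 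A safer alternative is to rescale everything by a large factor $N$, round to integers, and use that $NB_\infty$ contains a copy of any finite $3$-regular-subtree structure up to distortion $1+O(1/N)$; this gives (b1). Finally, (b2) follows from (b1) together with the known fact that finite products of finite trees, equivalently $\ell_\infty^{2m}(B_\infty)$ restricted to finite subsets, embed with uniformly bounded distortion into every nonsuperreflexive Banach space. This fact rests on Bourgain's and Johnson--Schechtman type results, via the finite-determination argument used above.
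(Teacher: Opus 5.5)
Part (a) is correct and is exactly the paper's argument. The genuine gap is in (b1), where you need $2m$ trees \emph{and} the constant $36KL+\eps$ simultaneously.

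Write $F=(F_1,\dots,F_{2m})$ for the embedding from (a), normalized so that $\TSP\le\|F(p)-F(q)\|$. Gluing the star $S$ at a basepoint of $T$ gives the wedge $T\vee S$. A map into a wedge places each point on a single branch, so it cannot record both $F_1(p)$ and the lamp set $A$. The alternative $T\oplus_\infty S$ is not an $\R$-tree.

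Nor can any loss be ``absorbed into the constant''. The number $36KL+\eps$ is precisely the factor $3$ from \eqref{eq:metric} times the $12KL+\eps$ of part (a), so even a factor $2$ breaks the claimed bound.

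A legitimate merge does exist. At each point of the finite set $F_1(\Lam(X))$, attach leaves of length $1/2$ indexed by $\P(X)$, and send $(A,x)$ to the leaf labelled $A$ at $F_1(A,x)$. Since $X$ is $1$-separated, $\TSP$ takes values in $\{0\}\cup[1,\infty)$, and $\TSP(p,q)=0$ forces $F_1(p)=F_1(q)$. So this works with $2m$ trees, but the distortion becomes $3(12KL+1)+\eps$. Your fallback of a $(2m+1)$-st star factor does give $36KL+\eps$, but only for a weaker statement.

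What the paper does instead is route (b1) through $\Z$ rather than $\R$. Using Lemma~\ref{lem:perturbation} and rounding, it takes a componentwise injective embedding of $X$ with values in $\ell_\infty^m(\Z)$, so that Theorem~\ref{thm:induced} lands in $\ell_\infty^m(\Lam(\Z))$. Componentwise injectivity gives $\varphi_\lambda(A)=\varphi_\lambda(B)$ if and only if $A=B$, so $\ro$ is carried into every coordinate. The Stein--Taback embedding (Theorem~\ref{thm:Lam(Z)}) controls $\max\{\TSP,\ro\}$ on $\Lam(\Z)$, as noted in the proof of Theorem~\ref{thm:Lam(path)}. It therefore absorbs the lamp toggles into the same $2m$ copies of $B_\infty$.

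This eliminates both the extra star and your discretization step. Your discretization is sound, incidentally, once you rescale by a large $N$ and resolve branch points of degree $>3$ into binary trees, since $\Lam(X)$ is finite.

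One caution if you adopt the paper's route. A componentwise injective $\Z^m$-valued map with distortion $L+\eps'$ in general exists only after rescaling $\varphi$ by a large factor. The induced map $\varphi_\#$ then multiplies $\TSCP$ by that factor while leaving $\ro$ unchanged. You must therefore check that the two terms of $\max\{\TSP,\ro\}$ stay at comparable scales; the paper's brief sketch is silent on this point.
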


\begin{proof}
Since  $X$ is $K$-$\TSP$-efficient, $(\Lam(X),\TSP)$ is $K$-biLipschitz equivalent to  $(\Lam(X),\TSCP)$, and thus part (a) follows immediately from Corollary~\ref{cor:induced1}.

Part (b1) follows by essentially the same reasoning as the proof of Corollary~\ref{cor:induced1}, with an additional observation that when $X$ is 1-separated then, by Lemma~\ref{lem:perturbation} and a standard argument, if  there exists an $L$-biLipschitz embedding of $X$ into $\ell_\infty^m$, then there exists a componentwise injective $(L+\e')$-biLipschitz embedding of $X$ into $\ell_\infty^m(\Z)$. Thus part (c) follows, by Theorems~\ref{thm:induced}, \ref{thm:Lam(Z)}, and since by \eqref{eq:metric}, $(\Lam(X),d_{\Lam})$ is $3$-biLipschitz equivalent to $(\Lam(X),\max\{\TSP,\ro\})$.

Implication (b1)$\Rightarrow$(b2) is well-known. In 2014 Ostrovskii proved, using the Mazur's method of constructing
basic sequences (see \cite[pp.~4-5]{LT77}) , that  finite $\ell_\infty$-products of trees embed into all non-superreflexive Banach spaces with uniformly bounded biLipschitz distortion  (that depends only on the number of trees), \cite[pp.157-159, Step 1 of the proof of Theorem~1.7(a)]{Ost14}.  See also  \cite[Lemma~4.4]{BMSZ21} for an exposition of this proof.
\end{proof}

Corollary~\ref{cor:induced1-2} can be applied for example to $\TSP$-efficient spaces described in Examples~\ref{ex:RTSPefficient} and \ref{ex:wedgesumsTSPefficient}, that is when $X$ is a circle $\R/\Z$, a cycle $\Z_n:=\Z/n\Z$, or a wedge sum of $n$-copies of $\Z_n$ or of $\R$, which gives a new short proof of \cite[Theorem~1.3]{OR19} and 
\cite[Theorem~1.2]{BMSZ21}.

Further, Corollary~\ref{cor:induced1-2} can be applied when $X$ is a $\TSP$-efficient tree. Indeed, by  Lemma~\ref{lem:TSPdoubling}, every $\TSP$-efficient tree $T$ is doubling, and thus, by a result of Gupta, Krauthgamer, and Lee \cite[Theorem~2.5]{GKL03} (see also \cite[Theorem~2.12]{LNP09}, \cite[Theorem~1.2]{DE-BV23}) it embeds into a finite dimensional Euclidean space, where the distortion and the dimension depend only on the doubling constant. Hence, we have:

\begin{cor} \label{thm:trees}
Let $K<\infty$. Then there exist $m\in\N$ and $L<\infty$, depending only on $K$, such that for any  $K$-$\TSP$-efficient 1-separated tree $T$, the lamplighter space $(\Lam(T),d_{\Lam})$ admits an $L$-biLipschitz embedding into $\ell_\infty^m(B_\infty)$, where $B_\infty$ denotes the infinite 3-regular unweighted tree. 
\end{cor}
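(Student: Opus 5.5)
The plan is to reduce to Corollary~\ref{cor:induced1-2}(b1), with every constant controlled by $K$ alone.

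First, reduce to finite trees. Every finite subset of $\Lam(T)$ lies in $\Lam(F)$ for some finite $F\sbs T$. We may take $F$ to be a finite subtree: add the branch points of the convex hull, or the vertices on the paths between points of $F$ if $T$ is a graph tree. One must check that enlarging $F$ this way preserves $\TSP$-efficiency and $1$-separation with comparable constants. I expect this to be the most delicate point. The alternative is to work directly with $F$ as a subset of $T$, which is automatically $K$-$\TSP$-efficient and $1$-separated. I would take this second route: all we need from $F$ is an embedding into $\ell_\infty^m$, and subsets of a doubling tree inherit the doubling constant up to a factor. The case of infinite $T$ then follows by the ultralimit argument used before Theorem~\ref{thm:main}. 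The target $\ell_\infty^m(B_\infty)$ is not closed under ultralimits, however. One can either pass to $\ell_\infty^m$ of a suitable $\R$-tree and note that $1$-separation lets us recover a graph tree at integer scale, or argue via a compactness/diagonal argument on finite subsets with the tree structure. I would present the statement for finite $T$ and then remark that the general case follows by this limiting procedure.

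Next, embed $F$ into a low-dimensional space. By Lemma~\ref{lem:TSPdoubling}, $T$ is $4K$-doubling, hence $F$ is $16K$-doubling; in any case $F$ is doubling with constant depending only on $K$. We view $F$ as a subset of the tree $T$ (isometrically contained in an $\R$-tree). By the Gupta--Krauthgamer--Lee theorem \cite[Theorem~2.5]{GKL03}, a doubling tree metric, and hence $F$, admits an $L_0$-biLipschitz embedding into Euclidean $\R^{m_0}$, with $L_0,m_0$ depending only on the doubling constant, hence only on $K$. If the theorem is stated only for trees rather than subsets of trees, we apply it to the finite subtree spanned by $F$ in the $\R$-tree. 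That subtree has doubling constant controlled by that of $T$, since it is a subset of $T$ after adding branch points, which lie in $T$. Composing with the identity $\ell_2^{m_0}\to\ell_\infty^{m_0}$ costs a factor $\sqrt{m_0}$, giving an $L_0\sqrt{m_0}$-biLipschitz embedding into $\ell_\infty^{m_0}$.

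Finally, apply Corollary~\ref{cor:induced1-2}(b1) to the finite, $1$-separated, $K$-$\TSP$-efficient space $F$. This gives a $(36KL_0\sqrt{m_0}+1)$-biLipschitz embedding of $(\Lam(F),d_{\Lam})$ into $\ell_\infty^{2m_0}(B_\infty)$. We therefore set $m=2m_0$ and $L=36KL_0\sqrt{m_0}+1$, both depending only on $K$.
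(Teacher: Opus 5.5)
Your proposal is correct and follows the paper's route: $\TSP$-efficiency gives doubling via Lemma~\ref{lem:TSPdoubling}, the Gupta--Krauthgamer--Lee theorem then gives a Euclidean embedding (hence into $\ell_\infty^{m_0}$, at a cost of $\sqrt{m_0}$) with constants depending only on $K$, and Corollary~\ref{cor:induced1-2}(b1) finishes. You are more careful than the paper about passing from finite to infinite $T$, which the paper leaves implicit. Of your two options there, the diagonal argument is the one that works: bounded subsets of $\ell_\infty^m(B_\infty)$ are finite, so finite embeddings with a fixed basepoint and uniformly bounded Lipschitz constants have a pointwise convergent subsequence. An ultrapower of $B_\infty$, by contrast, has infinite valence and does not lead back to $B_\infty$ directly.
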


In particular this answers a question from \cite[p.225, comments after Theorem~1.8]{MR26}. 

\section{Characterization of lamplighter spaces that embed into Hilbert spaces} \lb{sec:ultra}

In this section we characterize metric spaces $X$ such that the lamplighter space $\Lam(X)$ biLipschitz embeds into a Hilbert space. We start by recalling relevant definitions.

A metric space $(X,d)$ is called an {\it ultrametric space} if for all $x, y, z\in X$ we have
\[d(x,y)\le \max\{d(x,z),d(z,y)\}.\]

The following fact is well-known and easy to see.
\begin{fact}\label{ultra=ND0}
Any ultrametric space has Nagata dimension 0 with constant $\gamma=1$, and, conversely, any metric space with Nagata dimension 0 with constant $\leq \gamma<\infty$ is $\gamma'$-biLipschitz equivalent to an ultrametric space, where 
$\gamma'<\infty$ depends only on $\gamma$. 
\end{fact}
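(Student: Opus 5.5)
The statement has two directions, and I will prove them separately.

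\emph{Ultrametric implies Nagata dimension $0$ with $\gamma=1$.} Fix a scale $s>0$. Declare $x\sim y$ when $d(x,y)<s$. By the ultrametric inequality this relation is an equivalence relation, and its classes are open balls of radius $s$. Any two points of one class are at distance $<s$, so each class has diameter $\leq s=\gamma s$. Now take $A\sbs X$ with $\diam(A)<s$. All points of $A$ are pairwise equivalent, so $A$ meets exactly one class. Hence the cover by classes meets any such $A$ at most $0+1$ times, as required.

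\emph{Nagata dimension $0$ implies biLipschitz to an ultrametric.} Assume $X$ has Nagata dimension $0$ with constant $\gamma$. I use the second formulation of Nagata dimension at the dyadic scales $s=2^k$, $k\in\Z$. At scale $2^k$, define $\mathcal{C}_k$ to be the partition of $X$ into ``$2^k$-chain components'': $x$ and $y$ lie in the same class if they can be joined by a chain $x=y_0,\dots,y_j=y$ with $d(y_{i-1},y_i)\le 2^k$. With $n=0$ the cover has the single set $Y_1=X$, so the Nagata condition says every such chain has $d(y_0,y_j)\le\gamma 2^k$. Thus each class of $\mathcal{C}_k$ has diameter $\le \gamma 2^k$. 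The chain-component partitions are canonical, and since a $2^{k}$-chain is also a $2^{k+1}$-chain, they are nested: $\mathcal{C}_k$ refines $\mathcal{C}_{k+1}$. Then define
\[
\rho(x,y)=\inf\{2^k: x,y \text{ lie in the same class of } \mathcal{C}_k\}.
\]
Nestedness gives the ultrametric inequality. If $x,z$ share a class at level $k$ and $z,y$ share a class at level $k'$, then at level $\max\{k,k'\}$ all three points share a class.

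For the comparison with $d$, take $x\neq y$ and let $k$ be such that $2^{k-1}<d(x,y)\le 2^k$. The one-step chain shows $x,y$ share a class of $\mathcal{C}_k$, so $\rho(x,y)\le 2^k<2d(x,y)$. Conversely, if $x,y$ share a class of $\mathcal{C}_j$, then $d(x,y)\le\gamma 2^j$, and therefore $\rho(x,y)\ge d(x,y)/\gamma$. Also $\rho(x,y)>0$ for $x\neq y$, because the infimum is over $j$ with $2^j\ge d(x,y)/\gamma$. So $\rho$ is an ultrametric that is $2\gamma$-biLipschitz equivalent to $d$, and we may take $\gamma'=2\gamma$.

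The only delicate point is to use the chain-component formulation rather than arbitrary covers, because it is what guarantees nestedness across scales. Once that is in place, the rest is routine.
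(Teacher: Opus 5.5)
Your proof is correct. The paper gives no proof of this Fact; it only calls it well known and easy to see. Your argument is the standard one. Your remark that the canonical $2^k$-chain components are automatically nested is exactly what makes $\rho$ an ultrametric with $d/\gamma\le\rho\le 2d$.

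One small point deserves to be made explicit. The paper defines the constant $\gamma$ through the cover formulation and only asserts, without tracking constants, that the chain formulation is ``equivalent''. For $n=0$ nothing is lost in passing between them, for two reasons:
\begin{itemize}
\item A Nagata-$0$ cover at scale $s$ is a partition, since a singleton meets at most one member.
\item Any two points at distance $<s$ lie in the same block. Hence chains with steps $<s$ have endpoints at distance $\le\gamma s$.
\end{itemize}
To reach your setting with steps $\le s$, apply this at scales $s'>s$ and let $s'\downarrow s$. Alternatively, run your construction with strict chains $d(y_{i-1},y_i)<2^k$, choosing $k$ with $2^{k-1}\le d(x,y)<2^k$. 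Either way you obtain $\gamma'=2\gamma$.
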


We say that a semimetric space  $(X,d)$ is {\it Markov $p$-convex}, where $p>0$, if there exists  a constant $\Pi_p$ such that for every Markov chain 
$\{X_t\}_{t=0}^\infty$ on a state space $\Om$, and every $f:\Om\to X$, we have for every $m\in \N$
\[\sum_{k=0}^m\sum_{t=1}^{2^m} \frac{\E[d(f(X_t),f(\widetilde{X}_t(t-2^k)))^p]}{2^{kp}}\le
 \Pi_p^p\sum_{t=1}^{2^m} \E[d(f(X_t),f(X_{t-1}))^{p}],
\]
where, for any integer $l\ge 0$, $\{\widetilde{X}_t(l)\}_{t=0}^\infty$ denotes the process which equals $X_t$ for time $t\le l$ and evolves independently (with respect to the same transition probabilities) for time $t>l$. The notion of  Markov $p$-convexity was introduced in \cite{LNP09}, where it was also shown that an $\R$-tree $T$ biLipschitzly embeds into a Hilbert space \wtw\ it is Markov 2-convex.

We say that a metric space $X$ {\it contains the paths with uniformly bounded biLipschitz distortion} if there exist $L<\infty$ such that for every $k\in\N$, there exists an $L$-biLipschitz embedding $\R \sps \{0,\dots k\} \to X$.

We are now ready to state the main result of this section, which is a repetition of Theorem~\ref{thm:main2intro}.

\begin{theorem} \label{thm:main2}
Let $X$ be a metric space. Equip $\Lam(X)$ with the semimetric $\TSP$.  Then the following are equivalent.
\begin{enumerate}[label=(\roman*)]
    \item $X$ is $\TSP$-efficient and has Nagata dimension 0.
   
    \item $X$ has Assouad dimension $<1$.
   
    \item $X$ biLipschitz embeds into $\R$ and has Nagata dimension 0.
   
    \item $\Lam(X)$ has Nagata dimension 0.
   
    \item $\Lam(X)$ biLipschitz embeds into a Hilbert space.
  
    \item $\Lam(X)$ is Markov 2-convex.
  
    \item $\Lam(X)$ does not contain the binary trees with uniformly bounded biLipschitz distortion.
\end{enumerate}
\end{theorem}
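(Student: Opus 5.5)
The plan is to close the cycle (ii)$\Rightarrow$(iii)$\Rightarrow$(i)$\Rightarrow$(iv)$\Rightarrow$(v)$\Rightarrow$(vi)$\Rightarrow$(vii)$\Rightarrow$(ii). As in the proof of Theorem~\ref{thm:main}, all properties are finitely determined, so I may take $X$ finite and track only the constants.

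\emph{(ii)$\Rightarrow$(iii).} Suppose $X$ has sharp Assouad dimension $\le\alpha<1$ with constant $\gamma$.
\begin{itemize}
\item First I show $X$ has Nagata dimension $0$. Take an $s$-chain $y_0,\dots,y_k$ with $d(y_0,y_k)=\Gamma s$. It has roughly $\Gamma/2$ points that are $s$-separated and lie in a ball of radius $\Gamma s$. The Assouad bound gives $\Gamma/2\lesssim\gamma(\Gamma)^{\alpha}$, so $\Gamma$ is bounded in terms of $\gamma,\alpha$. The chain-components at scale $s$ then witness the second formulation of Nagata dimension $0$.
\item By Fact~\ref{ultra=ND0}, I may replace $X$ by a biLipschitz equivalent ultrametric. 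After rounding distances to powers of $2$, it is a tree of nested balls in which a ball of diameter $r$ splits into children of diameter $\le r/2$.
\item I embed into $\R$ recursively. Each ball of diameter $r$ receives an interval of length $cr$, and its children receive disjoint subintervals separated by gaps of length $\gtrsim r$.
\item This is feasible because the number of descendant balls at scale $2^{-j}r$ is $\le\gamma 2^{j\alpha}$. Hence the total length $\sum_j \gamma 2^{j\alpha}2^{-j}r$ is $\lesssim r$, as $\alpha<1$.
\item The gaps give the lower bound on distances, and the containing intervals give the upper bound.
\end{itemize}

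\emph{(iii)$\Rightarrow$(i).} Subsets of $\R$ are $2$-$\TSP$-efficient by Example~\ref{ex:RTSPefficient}. $\TSP$ and $\TSCP$ both scale by at most the biLipschitz constant, so $\TSP$-efficiency is biLipschitz invariant.

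\emph{(i)$\Rightarrow$(iv).} Assume $X$ is an ultrametric (Fact~\ref{ultra=ND0}). The key observation is that $(\Lam(X),\TSCP)$ is then an ultrametric semimetric.
\begin{itemize}
\item We have $\{x,z\}\cup(A\Delta C)\subseteq(\{x,y\}\cup A\Delta B)\cup(\{y,z\}\cup B\Delta C)$.
\item In an ultrametric, the diameter of a union of two sets sharing the point $y$ is at most the maximum of their diameters.
\item So $\TSCP$ is ultrametric and has Nagata dimension $0$. Since $\TSP\approx\TSCP$ by $\TSP$-efficiency, $(\Lam(X),\TSP)$ has Nagata dimension $0$.
\end{itemize}

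\emph{(iv)$\Rightarrow$(v)$\Rightarrow$(vi)$\Rightarrow$(vii).} These are standard.
\begin{itemize}
\item Nagata dimension $0$ gives a biLipschitz ultrametric by Fact~\ref{ultra=ND0}, and ultrametrics embed isometrically into $\ell_2$.
\item Hilbert space is Markov $2$-convex \cite{LNP09}, and Markov $2$-convexity passes to biLipschitz images.
\item Binary trees have Markov $2$-convexity constants tending to infinity \cite{LNP09}, so they cannot embed with uniform distortion.
\end{itemize}

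\emph{(vii)$\Rightarrow$(ii), the main obstacle.} I argue the contrapositive in two cases.

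\emph{Case 1: $X$ is not $\TSP$-efficient.} Theorem~\ref{thm:TSPinefficientHamming} embeds Hamming cubes of arbitrary dimension into $\Lam(X)$ with distortion $4$. The binary tree $B_n$ embeds isometrically into a Hamming cube: send a vertex to the set of edges on its root path. So $\Lam(X)$ contains the binary trees uniformly.

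\emph{Case 2: $X$ is $K$-$\TSP$-efficient but has Assouad dimension $\ge1$.} By Lemma~\ref{lem:TSPdoubling} the dimension is exactly $1$. My plan has two steps.
\begin{itemize}
\item \emph{Step 2a.} Show that $X$ contains the paths $\{0,\dots,k\}$ with uniformly bounded distortion.
\item \emph{Step 2b.} Show that $\Lam(\{0,\dots,k\})\subseteq\Lam(\Z)$ contains binary trees of depth $\sim\log k$ uniformly, via the horocyclic-product or Diestel--Leader structure behind Theorem~\ref{thm:Lam(Z)}.
\item Lamplighters over a biLipschitz copy of a path inherit this, since the induced map of Theorem~\ref{thm:induced} with $\Lambda$ a singleton, combined with $\TSP$-efficiency, controls distortion.
\end{itemize}

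Step 2a is the hard step. For Step 2a, I would take a ball $B_R(x)$ containing $N\gg (R/r)^{1-\eta}$ points that are $r$-separated. I would then order these points along a tour of length $\le 2KR$ and select a subsequence of consecutive points at spacing $\approx r$ along the tour. I would then try to use $\TSP$-efficiency applied to sub-blocks to force $d(x_i,x_j)\gtrsim|i-j|r$: a large block whose diameter is small compared with its tour length would contradict $K$-efficiency. If this direct argument fails, I would pass to a weak tangent of $X$ with Assouad dimension $1$ and argue there.
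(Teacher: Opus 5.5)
Your proof has a genuine gap at Step~2a, and that step carries essentially all of the difficulty. The rest of your cycle is fine. In fact, your observation that $\TSCP$ is itself an ultrasemimetric when $X$ is ultrametric gives a cleaner route to (i)$\Rightarrow$(iv) than Theorem~\ref{thm:La(X)ultrametric}. But you take (iii)$\Rightarrow$(i) in its trivial direction, so Step~2a must absorb both hard implications of the paper. The claim ``$\TSP$-efficient and $\dim_A X=1$ implies uniformly biLipschitz discrete paths'' is the contrapositive of ``$\TSP$-efficient with no uniform paths implies $\dim_A X<1$''. The paper obtains the latter only by combining two ingredients. The first is the Tyson--Wu characterization: a space with no uniformly biLipschitz discrete paths is biLipschitz to a snowflake $(Y,d^{1-\eps})$. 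Then $\dim_A(Y,d)\le 1-\eps$ by Lemma~\ref{lem:TSPdoubling}, so $X$ has Nagata dimension $0$. The second is the implication ($\TSP$-efficient and Nagata dimension $0$) $\Rightarrow$ $\dim_A<1$. This goes through Lemma~\ref{lem:monotone} and Theorem~\ref{thm:TSPultrametricbiLipembedsintoR}, where cumulative length along an optimal tour of a $K$-$\TSP$-efficient ultrametric is a $K$-biLipschitz embedding into $\R$, and then through porosity of Nagata-dimension-$0$ subsets of $\R$. Your sketch contains neither ingredient.

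The direct argument you outline does not get there, for two reasons.

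\emph{Sparse points.} $\dim_A X=1$ only yields, for each $\eta>0$, some ball with about $(R/r)^{1-\eta}$ $r$-separated points. That is a vanishing fraction of the $2KR/r$ allowed by Lemma~\ref{lem:TSPdoubling}. Along a tour of length $\le 2KR$, consecutive points are then typically at tour-distance $\gg r$, so long runs at spacing $\approx r$ need not exist.

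\emph{Folding.} Even for such a run $x_i,\dots,x_j$, optimality of sub-tours plus $K$-efficiency only gives $\diam\{x_i,\dots,x_j\}\ge (j-i)r/K$, not $d(x_i,x_j)\gtrsim (j-i)r$. An optimal tour can fold back, for instance out and back along an arm of a star or a tooth of a comb. This produces blocks whose diameter is comparable to their tour length but whose endpoints nearly coincide. Extracting long non-folding stretches is exactly a biLipschitz-pieces statement of Tyson--Wu type, and ``pass to a weak tangent and argue there'' does not supply it.

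To repair the proof, replace (vii)$\Rightarrow$(ii) by (vii)$\Rightarrow$(i) as the paper does. The absence of binary trees in $\Lam(X)$ forces the absence of uniform paths in $X$, via the same Lyons--Pemantle--Peres trees you use in Step~2b; then apply Tyson--Wu. The absence of binary trees also rules out Hamming cubes, hence gives $\TSP$-efficiency by Theorem~\ref{thm:main}. Then add a genuine proof of (i)$\Rightarrow$(ii) or (i)$\Rightarrow$(iii).
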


The proof of Theorem~\ref{thm:main2} occurs at the end of this section, following a host of requisite intermediate results.

The following theorem is readily available by combining well-known results in the literature. It establishes $(ii) \iff (iii)$ in Theorem~\ref{thm:main2}.

\begin{theorem} \label{thm:Assouad<1}
Let $X$ be a metric space. Then the Assouad dimension of $X$ is $<1$ if and only if $X$ biLipschitz embeds into $\R$ and has Nagata dimension 0.
\end{theorem}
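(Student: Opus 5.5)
We prove the two directions separately, relying on standard facts about ultrametrics and Assouad dimension.

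\emph{($\Rightarrow$)} Suppose $X$ has sharp Assouad dimension $\le\alpha<1$ with constant $\gamma$.

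\textbf{Nagata dimension 0.} At a scale $s>0$, call $x,y$ $s$-chained if there is a sequence $x=y_0,\dots,y_k=y$ in $X$ with $d(y_{j-1},y_j)\le s$. The $s$-chain components cover $X$, so it suffices to show each has diameter $\le Cs$ with $C=C(\alpha,\gamma)$. This is the standard fact that Assouad dimension $<1$ forces uniform perfectness to fail quantitatively, i.e.\ chain components are uniformly small. We argue by contradiction. Suppose a component has diameter $>Cs$. Take a chain from $x$ to a point at distance $>Cs$ and greedily extract points of the chain that are $s$-separated and whose distances from $x$ increase by roughly $s$ each time. This produces about $C/2$ points, $s$-separated, inside $B_{Cs}(x)$. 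Then $C/2\lesssim\gamma C^\alpha$, which fails once $C$ is large because $\alpha<1$. Hence every component has diameter $\le Cs$, and $X$ has Nagata dimension 0 with constant $C$.

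\textbf{Embedding into $\R$.} By Fact~\ref{ultra=ND0} we may replace $X$ by a biLipschitz equivalent ultrametric space. An ultrametric corresponds to a rooted tree of balls. Each ball of radius $r$ splits into children balls of radius $\le r/2$, say, which are pairwise at distance $>r/2$.

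The Assouad bound controls the number of descendants: a ball of radius $r$ contains at most $\gamma 2^{j\alpha}$ disjoint descendant balls at level $r2^{-j}$. This follows by choosing one point from each such ball, since the chosen points are $r2^{-j}$-separated.

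We embed recursively. Each ball $B$ of radius $r$ is placed inside an interval $I_B$ of length $\asymp r$, with the children's intervals laid out in order, separated by gaps $\ge cr$. The difficulty is the total length. The children's intervals have total length $\sum(\text{child radii})$ plus gaps, and summing over generations gives a geometric series
\[
\sum_j \gamma 2^{j\alpha}\cdot r2^{-j},
\]
which converges precisely because $\alpha<1$. So we assign each ball the interval length $\ell(B):=\sum_{B'\subseteq B} c\,\mathrm{rad}(B')$ over all descendant balls $B'$. This is $\asymp \mathrm{rad}(B)$ by the convergent series, and it makes the recursive placement consistent. Points in different children of $B$ are then separated by a gap $\gtrsim \mathrm{rad}(B)\gtrsim d(x,y)$. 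Two points lie in a common interval of length $\lesssim\mathrm{rad}(B)\asymp d(x,y)$, where $B$ is their smallest common ball. This yields a biLipschitz embedding. For infinite $X$ we first treat finite subsets and then pass to a compactness or ultralimit argument, which is legitimate since subsets of $\R$ are closed under ultralimits of bounded-distortion embeddings.

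\emph{($\Leftarrow$)} Suppose $X\subseteq\R$ up to biLipschitz equivalence and $X$ has Nagata dimension 0 with constant $\gamma$. Fix $x$, $r\le R$, and an $r$-separated set $S\subseteq B_R(x)$.

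Since $\R$ has Assouad dimension 1, we only get $|S|\lesssim R/r$ for free, so we need an improvement. At scale $s=R/(2\gamma)$, the set $X\cap B_R(x)$ breaks into $s$-components, each of diameter $\le\gamma s=R/2$. In $\R$, distinct components are separated by gaps $>s$. Because the interval has length $\lesssim R$, the number $n(\gamma)$ of components is bounded.

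The key point is that each component has diameter at most half the previous one while costing only a bounded number $n$ of pieces. Iterating over $\log_2(R/r)$ scales gives $|S|\le n^{\log_2(R/r)}$. This bound is not yet $<(R/r)^1$ unless $n<2$, so a sharper counting is needed.

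The actual gain comes from the gaps. The $n$ pieces at each stage have total length at most $(1-c)$ times the length of the parent interval, because the gaps of size $>s\asymp R/\gamma$ are removed. So the total length at depth $j$ is at most $(1-c)^j R$, while the $r$-separated points at depth $j\approx\log_2(R/r)$ number at most $(1-c)^jR/r+$(number of pieces). Balancing these two terms gives an exponent $\alpha<1$.

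This length-versus-count balancing in the reverse direction is the step I expect to be the main obstacle. The plan is to measure Lebesgue measure of neighbourhoods: the $r$-neighbourhood of $X\cap B_R(x)$ has measure $\lesssim (r/R)^{c'}R$ by the iterated gap removal, and an $r$-separated set is bounded by that measure divided by $r$, giving $|S|\lesssim(R/r)^{1-c'}$.
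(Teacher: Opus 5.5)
Your argument is correct. Its skeleton is the same as the paper's; the difference is that you prove directly the ingredients the paper imports.

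For ($\Rightarrow$), the paper gets Nagata dimension $0$ from the general bound ``Nagata dimension $\le$ Assouad dimension'' of \cite{LR15}. It then passes to an ultrametric via Fact~\ref{ultra=ND0} and quotes \cite[3.8~Theorem]{ultrametric} for the biLipschitz embedding of ultrametrics of Assouad dimension $<1$ into $\R$. You replace the first citation by the elementary chain-counting argument: about $C/2$ points that are $s$-separated in $B_{Cs}(x)$ contradict $|S|\le\gamma C^{\alpha}$ once $C$ is large. You replace the last citation by an explicit interval-packing construction, whose only real input is the convergence of $\sum_j\gamma 2^{j\alpha}\,r2^{-j}$ for $\alpha<1$.

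For ($\Leftarrow$), the paper argues Nagata dimension $0$ $\Rightarrow$ porous \cite[Remark~6.14]{Guy} $\Rightarrow$ Assouad dimension $<1$ \cite{porous}. Your observation that distinct $s$-components in $\R$ are separated by gaps $>s$ is exactly the porosity step. Your iterated gap removal is a proof of the second implication.

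The paper's proof is three lines but rests on several external theorems. Yours is self-contained and gives explicit constants.

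Two details in ($\Leftarrow$) need to be fixed for your count to work.

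First, split each piece $P$ at the scale $\diam(P)/(2\gamma)$ set by its own diameter. A fixed dyadic scale $R2^{-j}/(2\gamma)$ need not create any gap in a piece that is already small, so the claimed factor $(1-c)$ per stage could fail. With relative scales, every piece of positive diameter has at least one internal gap $>\diam(P)/(2\gamma)$. Hence it has fewer than $2\gamma+1$ children, whose convex hulls have total length $\le(1-c)\diam(P)$ with $c=1/(2\gamma)$.

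Second, do not stop at depth $j\approx\log_2(R/r)$: there the term $n^j=(R/r)^{\log_2 n}$ already exceeds $R/r$. Instead take $j=\lfloor\beta\log_n(R/r)\rfloor$ for a fixed $\beta\in(0,1)$. This gives
\[
|S|\le\frac{2(1-c)^jR}{r}+n^j\lesssim\Big(\frac{R}{r}\Big)^{\max\{\beta,\;1-\beta\log(1/(1-c))/\log n\}},
\]
with exponent $<1$. This is the ``balancing'' you allude to, and it is also all your Lebesgue-measure formulation needs.

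A slightly cleaner variant: every interval of length $\ell$ contains a subinterval of length $>\ell/(4\gamma)$ missing $X$. Subdividing into $N\ge 8\gamma$ equal pieces, at least one piece always misses $X$, which gives the exponent $\log(N-1)/\log N$ directly.
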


\begin{proof}
Assume that the Assouad dimension of $X$ is $<1$. Then since Nagata dimension is a nonnegative integer that is bounded above by the Assouad dimension (\cite[Theorem~1.1]{LR15}), we have that the Nagata dimension of $X$ is 0. Thus, by Fact~\ref{ultra=ND0} and \cite[3.8~Theorem]{ultrametric}, $X$ biLipschitz embeds into $\R$.

Now assume that $X$ biLipschitz embeds into $\R$ and has Nagata dimension 0. For subsets $A$ of $\R$, it is known that Assouad dimension $<1$ is equivalent to Nagata dimension 0. Indeed, by \cite[Remark~6.14]{Guy}, if $A$ has Nagata dimension 0, then $A$ is porous (see \cite[Definition~6.11]{Guy}), if $A$ is porous, then $A$ has Assouad dimension $<1$ \cite{porous}, and if $A$ has Assouad dimension $<1$, then $A$ has Nagata dimension 0 \cite[Theorem~1.1]{LR15}. The desired conclusion follows.
\end{proof}

Let $(X,d)$ be a finite metric space. Define $\TSP(X)$ to be the minimum of $\sum_{i=1}^m d(x_{i-1},x_i)$ among all enumerations $\{x_i\}_{i=0}^m$ of $X$. The next lemma is used to prove Theorem~\ref{thm:TSPultrametricbiLipembedsintoR}, which in turn establishes $(i) \implies (iii)$ in Theorem~\ref{thm:main2}.

\begin{lemma} \label{lem:monotone}
Let $(X,d)$ be a finite, nonempty ultrametric space. Let $\{x_i\}_{i=0}^m$ be an enumeration of $X$ with $\sum_{i=1}^m d(x_{i-1},x_i) = \TSP(X)$. Then the enumeration is monotone in the following sense: 
for every $r> 0$ and $z\in X$, there exist $i_*,i^* \in [0,m]\cap\Z$ such that 
$$\{i \in [0,m]\cap\Z: x_i \in B_r(z)\} = [i_*,i^*] \cap \Z.$$

Moreover, with $r,z,i_*,i^*$ as above, if $B_r(z)$ is $K$-$\TSP$-efficient, then 
$$\sum_{i=i_*+1}^{i^*}d(x_{i-1},x_i) \leq Kr.$$
\end{lemma}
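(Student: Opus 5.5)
The plan is an exchange argument for the first part, which rests on two ultrametric facts about a ball $B=B_r(z)$. First, $\diam(B)\le r$; for the open ball we even have $d(u,u')<r$ for $u,u'\in B$. Second, for every $w\notin B$ the distance $d(u,w)$ is the same for all $u\in B$, namely $d(u,w)=d(z,w)$. This holds because $d(u,z)<d(z,w)$ forces equality in the ultrametric inequality. Moreover this common value is $\ge r$ for the open ball, and $>r$ for the closed ball. I will write the argument for the open ball; the closed-ball case is identical with ``$<r$'' replaced by ``$\le r$'' inside $B$ and ``$\ge r$'' replaced by ``$>r$'' outside.

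For the first part, suppose the set of indices $i$ with $x_i\in B$ is not an interval. Split the enumeration into maximal runs lying in $B$ and maximal runs lying outside $B$. Then there is a $B$-run $I$, followed by a nonempty outside run $J=(b,\dots,c)$, followed by another $B$-run $I'$, and after that possibly a point $e\notin B$. I form a new enumeration by moving $J$ to sit right after $I'$, so that the order becomes $\dots,I,I',J,e,\dots$.

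The removed edges are $(I_{\rm end},b)$, $(c,I'_{\rm start})$ and $(I'_{\rm end},e)$, with lengths $\beta=d(z,b)$, $\gamma=d(z,c)$ and $\varepsilon=d(z,e)$. The added edges are $(I_{\rm end},I'_{\rm start})$, $(I'_{\rm end},b)$ and $(c,e)$, with lengths $<r$, $=\beta$ and $\le\max\{\gamma,\varepsilon\}$. Since $r\le\min\{\gamma,\varepsilon\}$, the added total is strictly less than $\beta+\max\{\gamma,\varepsilon\}+r\le\beta+\gamma+\varepsilon$. If $e$ does not exist, the comparison is instead $<r+\beta\le\beta+\gamma$. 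In both cases the new enumeration is strictly shorter, contradicting $\sum d(x_{i-1},x_i)=\TSP(X)$. So the index set is an interval $[i_*,i^*]\cap\Z$.

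For the second part, the block $x_{i_*},\dots,x_{i^*}$ enumerates $B$. Suppose it were not an optimal enumeration of $B$. Then we could replace it by a shorter enumeration of $B$, and the two boundary edges $(x_{i_*-1},x_{i_*})$ and $(x_{i^*},x_{i^*+1})$ would keep their lengths, since the distance from an outside point to any point of $B$ is the constant $d(z,\cdot)$. That would contradict optimality. Hence $\sum_{i=i_*+1}^{i^*}d(x_{i-1},x_i)=\TSP(B)$. A closed route from $z$ through all of $B$ becomes a path enumerating $B$ after deleting an edge, so $\TSP(B)\le \TSP((B,z),(\emptyset,z))$. Applying $K$-$\TSP$-efficiency of $B$ gives $\TSP((B,z),(\emptyset,z))\le K\diam(B\cup\{z\})=K\diam(B)\le Kr$.

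The main obstacle is making the exchange in the first part \emph{strictly} decrease the length, since the claim must hold for every optimal enumeration and not just for some. Strictness comes from $d<r$ inside the open ball (respectively $d>r$ across the boundary of the closed ball). The endpoint case, where $e$ is absent, needs a separate check, and the comparison above handles it.
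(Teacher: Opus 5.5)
Your proof is correct and essentially follows the paper's argument. Your exchange, moving the outside run $J$ behind $I'$ (equivalently, splicing $I'$ directly after $I$), removes and adds exactly the same three edges as the paper's reordering. Your estimate $d(c,e)\le\max\{\gamma,\varepsilon\}$ combined with $r\le\min\{\gamma,\varepsilon\}$ is exactly how the paper proves its inequality $r+d(w_1,w_2)<d(w_1,u_1)+d(w_2,u_2)$.

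The only difference is in the second part. The paper replaces the block by a path with the same endpoints $x_{i_*},x_{i^*}$, so it needs no isosceles property. You allow free endpoints, use the isosceles property to keep the boundary edges unchanged, and bound $\TSP(B)$ by the closed tour from $z$. This is equally valid, once you add the routine shortcutting of repeated points when passing from a closed route to an enumeration.
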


In the proof of Lemma~\ref{lem:monotone} we will use the following elementary facts about ultrametrics:

\begin{fact}\lb{lem:ultra}
Let $(X,d)$ be an ultrametric space, $r> 0$, $z\in X$, $u,u_1,u_2\in B_r(z)$, and $w, w_1,w_2\not\in B_r(z)$. Then
\begin{equation}\lb{in-in}
    d(u_1,u_2)\le r,
\end{equation}
\begin{equation}\lb{out-in}
    d(w,u)> r,
\end{equation}
\begin{equation}\lb{isosceles}
    d(w,u_1)=d(w,u_2),
\end{equation}
\begin{equation}\lb{in-out}
    r+d(w_1,w_2)<d(w_1,u_1)+d(w_2,u_2).
\end{equation}
\end{fact}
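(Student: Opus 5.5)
We verify the four inequalities of Fact~\ref{lem:ultra} in order, each time using only the ultrametric inequality $d(a,b)\le\max\{d(a,c),d(c,b)\}$. We take $B_r(z)$ to be the closed ball $\{u: d(u,z)\le r\}$; this is the convention under which \eqref{out-in} is a strict inequality.

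\textbf{Proof of \eqref{in-in}.} For $u_1,u_2\in B_r(z)$ we have
$$d(u_1,u_2)\le\max\{d(u_1,z),d(z,u_2)\}\le r.$$

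\textbf{Proof of \eqref{out-in}.} Let $w\notin B_r(z)$ and $u\in B_r(z)$, so $d(w,z)>r\ge d(u,z)$. The ultrametric inequality gives $d(w,z)\le\max\{d(w,u),d(u,z)\}$. The maximum cannot be $d(u,z)$, since $d(u,z)<d(w,z)$. Hence $d(w,u)\ge d(w,z)>r$.

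The same argument gives more than was asked. Applying the ultrametric inequality the other way, $d(w,u)\le\max\{d(w,z),d(z,u)\}=d(w,z)$. Combined with the above, this yields $d(w,u)=d(w,z)$ for every $u\in B_r(z)$.

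\textbf{Proof of \eqref{isosceles}.} For $u_1,u_2\in B_r(z)$, the identity just proved gives $d(w,u_1)=d(w,z)=d(w,u_2)$.

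\textbf{Proof of \eqref{in-out}.} Let $w_1,w_2\notin B_r(z)$ and $u_1,u_2\in B_r(z)$. By the ultrametric inequality and \eqref{isosceles},
$$d(w_1,w_2)\le\max\{d(w_1,u_1),d(u_1,w_2)\}=\max\{d(w_1,u_1),d(w_2,u_2)\}.$$
Suppose first that the maximum is $d(w_1,u_1)$. Then $d(w_1,w_2)\le d(w_1,u_1)$, and by \eqref{out-in} we have $r<d(w_2,u_2)$. Adding these two inequalities gives
$$r+d(w_1,w_2)<d(w_1,u_1)+d(w_2,u_2).$$
If instead the maximum is $d(w_2,u_2)$, the same argument applies with the indices $1$ and $2$ exchanged. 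This proves \eqref{in-out}.

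The only point requiring care is \eqref{out-in}, which depends on the closed-ball convention and on the equality $d(w,u)=d(w,z)$. Once that equality is in hand, \eqref{isosceles} and \eqref{in-out} follow immediately.
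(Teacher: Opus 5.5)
Your proof is correct and matches the paper's argument step by step, including the closed-ball reading of $B_r(z)$. The only cosmetic differences are that you obtain \eqref{isosceles} from the stronger identity $d(w,u)=d(w,z)$ rather than comparing $u_1$ and $u_2$ directly, and that you prove \eqref{in-out} by a two-case split where the paper adds the max and min bounds and uses $\min\{a,b\}+\max\{a,b\}=a+b$.
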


\begin{proof}[Proof of Fact~\ref{lem:ultra}]
Inequality \eqref{in-in} follows readily, since $d(u_1,u_2)\le \max\{d(u_1,z),d(z,u_2)\}\le r$.

For \eqref{out-in}, note that $d(w,z)> r$ because $w\not\in B_r(z)$, while $d(u,z)\le r$. Since
$d(w,z)\le \max\{d(w,u),d(u,z)\}$ and $d(u,z)\le r< d(w,z)$, the maximum must be attained at
$d(w,u)$, hence $d(w,u)\ge d(w,z)> r$.

For \eqref{isosceles}, observe that $d(u_1,u_2)\le r< d(w,u_1)$ by \eqref{in-in} and \eqref{out-in}, and therefore
\begin{equation*}
    d(w,u_2)\le  \max\{d(w,u_1),d(u_1,u_2)\}=d(w,u_1).
\end{equation*}
Exchanging the roles of $u_1$ and $u_2$ gives the reverse inequality, and \eqref{isosceles} follows.

Finally, applying the ultrametric inequality and then \eqref{isosceles} to the point $w_2\not\in B_r(z)$, we obtain
\begin{equation*}
    d(w_1,w_2)\le  \max\{d(w_1,u_1),d(u_1,w_2)\}\overset{\eqref{isosceles}}{=}\max\{d(w_1,u_1),d(u_2,w_2)\}.
\end{equation*}
By \eqref{out-in} we also have
\begin{equation*}
    r <\min\{d(w_1,u_1),d(u_2,w_2)\}.
\end{equation*}
Adding together the last two inequalities and using $\min\{a,b\}+\max\{a,b\}=a+b$ yields \eqref{in-out}.
\end{proof}

\begin{proof}[Proof of Lemma~\ref{lem:monotone}]
Assume towards a contradiction that the monotonicity condition fails. Let $r> 0$ and $z\in X$ such that $\{i \in [0,m]\cap\Z: x_i \in B_r(z)\} \neq [i_*,i^*]\cap\Z$ for all $i_*,i^* \in [0,m]\cap\Z$. Then there exist $j< k < \ell \in [0,m]\cap\Z$ such that $x_j,x_\ell \in B_r(z)$ while $x_k \not\in B_r(z)$. Let $[j_*,j^*]\cap\Z, [\ell_*,\ell^*]\cap\Z$ be the largest subintervals of $[0,m]\cap\Z$ such that $j \in [j_*,j^*]$, $\ell \in [\ell_*,\ell^*]$, $\{x_i\}_{i=j_*}^{j^*} \sbs B_r(z)$, and $\{x_i\}_{i=\ell_*}^{\ell^*} \sbs B_r(z)$. We will treat only the case $\ell^* < m$, as the case $\ell^* = m$ is similar and slightly easier.

We reorder the enumeration  $\{x_i\}_{i=0}^m$ by placing the subpath  $\{x_i\}_{i=\ell_*}^{\ell^*}$ in between $x_{j^*}$ and $x_{j^*+1}$, that is, we  consider a new enumeration $\{y_i\}_{i=0}^m$ of $X$ defined by
\begin{equation*}
    y_i := \begin{cases}
            x_i & i \in [0,j^*]\cap\Z \\
            x_{i+\ell_*-j^*-1} & i \in [j^*+1,j^*+1+\ell^*-\ell_*]\cap\Z \\
            x_{i+\ell_*-\ell^*-1} & i \in [j^*+2+\ell^*-\ell_*,\ell^*]\cap\Z \\
            x_{i} & i \in [\ell^*+1,m]\cap\Z \\
           \end{cases}.
\end{equation*}
Then we have
\begin{equation}\lb{eq-mon}
\begin{split}
    \sum_{i=1}^m d(y_{i-1}&,y_i) = \sum_{i=1}^{j^*}  d(x_{i-1},x_i) + d(x_{j^*},x_{\ell_*}) 
    + \sum_{i=j^*+2}^{j^*+1+\ell^*-\ell_*} d(y_{i-1},y_i)
    + d(x_{\ell^*},x_{j^*+1}) \\
    &\hspace{12mm} + \sum_{i=j^*+3+\ell^*-\ell_*}^{\ell^*} d(y_{i-1},y_i)  
    + d(x_{\ell_*-1},x_{\ell^*+1}) + \sum_{i=\ell^*+2}^{m} d(x_{i-1},x_{i}) \\
 & \hspace{3mm}  =\sum_{i\not\in\{j^*+1,\ell_*,\ell^*+1\}} d(x_{i-1},x_i) + d(x_{j^*},x_{\ell_*}) 
    + d(x_{\ell^*},x_{j^*+1})
   + d(x_{\ell_*-1},x_{\ell^*+1}).
   \end{split}
\end{equation}
Since $x_{j^*},x_{\ell_*}, x_{\ell^*}\in B_r(z)$, and $x_{j^*+1},x_{\ell_*-1}, x_{\ell^*+1}\not\in B_r(z)$, by Fact~\ref{lem:ultra}, we have
\begin{equation}\lb{eq1}
 d(x_{\ell^*},x_{j^*+1}) \overset{\eqref{isosceles}}{=} d(x_{j^*},x_{j^*+1}),
\end{equation}

and
\begin{equation}\lb{eq2}
 d(x_{j^*},x_{\ell_*}) +   d(x_{\ell_*-1},x_{\ell^*+1})
 \overset{\eqref{in-in}}{\leq}
 r + d(x_{\ell_*-1},x_{\ell^*+1})
 \overset{\eqref{in-out}}{<}d(x_{\ell_*-1},x_{\ell_*})+  d(x_{\ell^*},x_{\ell^*+1}),
\end{equation}
where \eqref{in-out} is applied with $u_1=x_{\ell_*}$, $u_2=x_{\ell^*}$, $w_1=x_{\ell_*-1}$, and $w_2=x_{\ell^*+1}$.

Plugging \eqref{eq1} and  \eqref{eq2}  into \eqref{eq-mon}, we obtain
\begin{equation*}\lb{eq-mono}
\begin{split}
    \sum_{i=1}^m d(y_{i-1},y_i) &<\sum_{i\not\in\{j^*+1,\ell_*,\ell^*+1\}} d(x_{i-1},x_i) + d(x_{j^*},x_{j^*+1})+ d(x_{\ell_*-1},x_{\ell_*})+  d(x_{\ell^*},x_{\ell^*+1})\\
&= \sum_{i=1}^m d(x_{i-1},x_i) = \TSP(X),  
  \end{split}
\end{equation*}
 which is a contradiction, and 
thus completes the proof of the first part.

For the second part, suppose that $B_r(z)$ is $K$-$\TSP$-efficient. Assume, towards a contradiction, that $\sum_{i=i_*+1}^{i^*}d(x_{i-1},x_i) > Kr$. Then there exists an enumeration $\{w_i\}_{i=i_*}^{i^*}$ of $B_r(z)$ such that $w_{i_*} = x_{i_*}$, $w_{i^*} = x_{i^*}$, and $\sum_{i=i_*+1}^{i^*}d(w_{i-1},w_i) = \TSP(x_{i_*},B_r(z),x_{i^*}) \leq K\diam(B_r(z)) \leq Kr$. Then the enumeration $\{y_i\}_{i=0}^m$ of $X$ defined by
\begin{align*}
    y_i := \begin{cases}
           x_i & i\in[0,i_*]\cap\Z \\
           w_i & i\in[i_*,i^*]\cap\Z \\
           x_i & i\in[i^*,m]\cap\Z
           \end{cases}
\end{align*}
satisfies $\sum_{i=1}^m d(y_{i-1},y_i) < \sum_{i=1}^m d(x_{i-1},x_i)$, a contradiction.
\end{proof}

The next theorem establishes $(i) \implies (iii)$ in Theorem~\ref{thm:main2}.

\begin{theorem} \label{thm:TSPultrametricbiLipembedsintoR}
Let $(X,d)$ be an ultrametric space. Assume that $X$ is $K$-$\TSP$-efficient for some $K<\infty$. Then $X$ admits a $K$-biLipschitz embedding into $\R$.
\end{theorem}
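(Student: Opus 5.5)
By finite determination we may assume $X$ is finite. Biembeddability into $\R$ with a fixed distortion passes to the whole space by an ultralimit or compactness argument, since ultralimits of rescaled copies of $\R$ containing a fixed image are again $\R$ after suitable normalization. Alternatively one can normalize the embeddings at a basepoint and use Tychonoff.

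For finite $X$ we fix an optimal enumeration $\{x_i\}_{i=0}^m$ realizing $\TSP(X)$, as in Lemma~\ref{lem:monotone}. We then define $f:X\to\R$ by
\[
f(x_j)=\sum_{i=1}^{j} d(x_{i-1},x_i),
\]
the arclength along the optimal path. This map is clearly injective and monotone in the index. We will check that it satisfies
\[
d(x_j,x_k)\le |f(x_j)-f(x_k)|\le K\,d(x_j,x_k),
\]
which gives distortion $K$.

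The lower bound is immediate from the triangle inequality. For $j<k$,
\[
f(x_k)-f(x_j)=\sum_{i=j+1}^{k}d(x_{i-1},x_i)\ge d(x_j,x_k).
\]

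For the upper bound, set $r=d(x_j,x_k)>0$ and consider the closed ball $B_r(x_j)$, which contains $x_k$. By the first part of Lemma~\ref{lem:monotone}, the indices $i$ with $x_i\in B_r(x_j)$ form an interval $[i_*,i^*]\cap\Z$, and this interval contains both $j$ and $k$. The ball $B_r(x_j)$ is a subset of $X$, and $\TSP$-efficiency is hereditary: $\TSP$ computed inside a subset dominates $\TSP$ in $X$, while $\TSCP$ is the same. So $B_r(x_j)$ is $K$-$\TSP$-efficient, and the second part of Lemma~\ref{lem:monotone} gives
\[
f(x_k)-f(x_j)\le \sum_{i=i_*+1}^{i^*}d(x_{i-1},x_i)\le Kr=K\,d(x_j,x_k).
\]

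The one delicate point is heredity. The subset inherits the metric, and the subset's $\TSP$ is an infimum over fewer paths, so it might appear larger than $X$'s. The second part of Lemma~\ref{lem:monotone}, however, uses only an enumeration of $B_r(z)$ with prescribed endpoints of length at most $K\diam$, so we need efficiency of the subspace itself.

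To get this, we take a near-optimal path in $X$ for $((B_r(z)\setminus\{x_{i_*},x_{i^*}\})\cup\{x_{i_*},x_{i^*}\}, x_{i_*})\to(\emptyset,x_{i^*})$. Any extra points of $X$ that the path visits can be deleted by the triangle inequality, which only shortens it. This yields an enumeration of $B_r(z)$ from $x_{i_*}$ to $x_{i^*}$ of length at most $K\diam(B_r(z))\le Kr$.

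With heredity in hand, the argument is complete. The main obstacle is therefore really Lemma~\ref{lem:monotone}, which is already available, together with a careful handling of this endpoint and heredity issue.
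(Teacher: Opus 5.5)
Your proof is correct and is essentially the paper's: reduce to finite $X$, take the arclength map $f$ along an optimal enumeration, get noncontraction from the triangle inequality, and get the $K$-Lipschitz bound from both parts of Lemma~\ref{lem:monotone}. The differences are minor. You take $r=d(x_j,x_k)$ directly, which is legitimate because $B_r$ is the closed ball, so the paper's $c^{\ell}$ approximation is not needed. You also show by shortcutting that $\TSP$ computed in a subset $Y\subseteq X$ equals $\TSP$ computed in $X$ on $\Lam(Y)$; this is the heredity of $\TSP$-efficiency that the paper uses tacitly when applying the lemma to $B_r(x_j)$. Your first remark, that the subset's $\TSP$ dominates, would not suffice on its own, but the shortcutting argument settles it.
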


\begin{proof}
Obviously, we may assume that $X\neq\emptyset$. By a compactness argument, it suffices to assume that $X$ is finite. Let $\{x_i\}_{i=0}^m$ be an enumeration of $X$ with $\sum_{i=1}^m d(x_{i-1},x_i) = \TSP(X)$. Define $f: X \to \R$ by $f(x_0)=0$ and for $j\in\{1,\dots,m\}$,
\begin{equation*}
    f(x_j) = \sum_{i=1}^j d(x_{i-1},x_i).
\end{equation*}
Clearly, the triangle inequality implies that $f$ is noncontractive. To see that $\Lip(f) \leq K$, let $j\leq k\in[0,m]\cap\Z$. Fix any $c>1$ and let $\ell \in \Z$ be such that $c^{\ell-1} < d(x_j,x_k) \leq c^{\ell}$, so that $x_k \in B_{c^{\ell}}(x_j)$. Let $i_* \leq i^*\in[0,m]\cap\Z$ be such that $\{i \in [0,m]\cap\Z: x_i \in B_{c^{\ell}}(x_j)\} = [i_*,i^*] \cap \Z$ and $\sum_{i=i_*+1}^{i^*}d(x_{i-1},x_i) \leq Kc^\ell$, which exist by Lemma~\ref{lem:monotone}. Then
\begin{align*}
    |f(x_k)-f(x_j)| = \sum_{i=j+1}^k d(x_{i-1},x_i) \leq \sum_{i=i_*+1}^{i^*} d(x_{i-1},x_i) \leq Kc^\ell \leq cKd(x_j,x_k).
\end{align*}
Since $c>1$ was arbitrary, this implies $\Lip(f) \leq K$.
\end{proof}

The next theorem establishes $(iii) \implies (iv)$ in Theorem~\ref{thm:main2}.

\begin{theorem} \label{thm:La(X)ultrametric}
Let $X \sbs \R$ be such that there exists an ultrametric $d$ for which the identity map $(X,|\cdot|) \to (X,d)$ is an $L$-biLipschitz equivalence. Then $(\Lam(X),\TSCP)$ is $L$-biLipschitz equivalent to an ultrasemimetric space.
\end{theorem}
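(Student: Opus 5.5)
The natural candidate is the semimetric on $\Lam(X)$ obtained by replacing $|\cdot|$ with $d$ inside $\TSCP$:
\[
\rho((A,x),(B,y)) := \diam_d(\{x,y\}\cup(A\Delta B)).
\]
Since the identity $(X,|\cdot|)\to(X,d)$ is $L$-biLipschitz, diameters of finite subsets computed in $|\cdot|$ and in $d$ agree up to the same factor. Hence $\rho$ and $\TSCP$ are $L$-biLipschitz equivalent; after rescaling $d$ we may assume $|x-y|\le d(x,y)\le L|x-y|$. It therefore suffices to show that $\rho$ is an ultrasemimetric.

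Symmetry of $\rho$ and $\rho((A,x),(A,x))=0$ are immediate. The content is the ultrametric triangle inequality
\[
\rho((A,x),(C,z))\le \max\{\rho((A,x),(B,y)),\,\rho((B,y),(C,z))\}.
\]
Set $S_1=\{x,y\}\cup(A\Delta B)$ and $S_2=\{y,z\}\cup(B\Delta C)$. Since $A\Delta C\subseteq (A\Delta B)\cup(B\Delta C)$, we get $\{x,z\}\cup(A\Delta C)\subseteq S_1\cup S_2$. Moreover, $y$ lies in both $S_1$ and $S_2$, so the sets overlap.

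The key fact is that in an ultrametric space, if two sets $S_1,S_2$ share a point $y$, then $\diam(S_1\cup S_2)=\max\{\diam S_1,\diam S_2\}$. To see this, take $u\in S_1$ and $v\in S_2$. Then
\[
d(u,v)\le\max\{d(u,y),d(y,v)\}\le\max\{\diam S_1,\diam S_2\},
\]
and pairs lying in the same $S_i$ are bounded by $\diam S_i$ directly. This gives the triangle inequality.

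There is no real obstacle; the only point to watch is the role of the basepoint $y$. It is precisely what makes $S_1$ and $S_2$ intersect, which is where the ultrametric inequality applies. This observation is the analogue of the easy proof that $\TSCP$ is a semimetric, with the sum replaced by a max. The $\R$-structure of $X$ is not actually used beyond the hypothesis that the identity map is $L$-biLipschitz.
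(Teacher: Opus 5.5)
Your argument is correct, and it takes a genuinely different and considerably shorter route than the paper's.

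\textbf{The paper's route.} The paper first replaces $d$ by the order-monotone ultrametric $d'(w,z)=\sup_{w\le x\le y\le z}d(x,y)$. It then defines $\rho$ as the largest $d'$-distance from $x$ or $y$ to the extreme points $\min$ and $\max$ of $\{x,y\}\cup(A\Delta B)$. The ultrametric inequality is verified by combining the monotonicity of $d'$ along the line with repeated use of the ultrametric inequality. The comparison $\rho\le\TSCP\le L\rho$ uses that the diameter of a subset of $\R$ is the distance between its extreme points. So the linear order of $\R$ is used essentially.

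\textbf{Your route.} You exploit that $\TSCP$ depends on the metric of $X$ only through diameters. Transporting it from $|\cdot|$ to $d$ therefore costs exactly the factor $L$; this is just the one-coordinate case of Theorem~\ref{thm:induced} applied to the identity map. You then observe that $\TSCP$ computed over any ultrametric is automatically an ultrasemimetric. The two configurations $\{x,y\}\cup(A\Delta B)$ and $\{y,z\}\cup(B\Delta C)$ share the point $y$, and in an ultrametric space the diameter of a union of two intersecting sets is the maximum of their diameters. Both routes give the same constant $L$.

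\textbf{What your route buys.} It gives transparency and generality: as you note, the hypothesis $X\subseteq\R$ plays no role. The conclusion holds for every metric space that is $L$-biLipschitz equivalent to an ultrametric, i.e.\ (by Fact~\ref{ultra=ND0}) every space of Nagata dimension $0$. In Theorem~\ref{thm:main2} the inclusion into $\R$ is then only needed afterwards, to pass from $\TSCP$ to $\TSP$ via $2$-$\TSP$-efficiency of subsets of $\R$. Combined with $\TSP$-efficiency, your version even yields $(i)\Rightarrow(iv)$ of Theorem~\ref{thm:main2} directly, without first embedding $X$ into $\R$.
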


\begin{proof}
Assume without loss of generality that $L^{-1}|\cdot| \leq d \leq |\cdot|$. Define a new metric $d'$ on $X$ by $d'(w,z) = \sup_{w\leq x\leq y\leq z\in X} d(x,y)$. It is easy to check that $d'$ is still an ultrametric with $L^{-1}|\cdot| \leq d' \leq |\cdot|$ and such that
\begin{equation} \label{eq:d'monotone}
    w\leq x\leq y\leq z \implies d'(x,y) \leq d'(w,z).
\end{equation}
Given $(A,x),(B,y) \in \Lam(X)$, define $m(A,x,B,y) := \min(\{x,y\}\cup(A\Delta B))$ and $M(A,x,B,y) := \max(\{x,y\}\cup(A\Delta B))$. Define $\rho$ on $\Lam(X)$ by
\begin{align*}
    \rho((A,x),(B,y)) := \max\{d'(x,m(A,x,B,y)),d'(y,m(A,x,B,y)),\\
    d'(x,M(A,x,B,y)),d'(y,M(A,x,B,y))\}.
\end{align*}

First, we check that $\rho \leq \TSCP\leq L\rho$. Let $(A,x),(B,y) \in \Lam(X)$. Clearly,
\begin{align*}
    \TSCP((A,x),(B,y)) = |M(A,x,B,y) - m(A,x,B,y)| \geq \rho((A,x),(B,y)).
\end{align*}

Further, since $d'$ is an ultrametric, we have
\begin{align*}
    \TSCP((A,x),(B,y)) &= |M(A,x,B,y) - m(A,x,B,y)| \\
    &\leq Ld'(m(A,x,B,y),M(A,x,B,y)) \\
    &\leq L(d'(x,m(A,x,B,y))\vee d'(x,M(A,x,B,y))) \\
    &\leq L\rho((A,x),(B,y)).
\end{align*}

Finally, we check that $\rho$ is an ultrasemimetric. Let $(A,x),(B,y),(C,z)\in\Lam(X)$. Since $x\in \{x,z\}\cup(A\Delta C)$ and $\{x,z\} \cup (A\Delta C) \sbs \{x,y\} \cup (A\Delta B) \cup \{y,z\} \cup (B\Delta C)$, it holds that
\begin{equation*} \label{eq:isosceles}
    x \geq m(A,x,C,z) \geq m(A,x,B,y) \wedge m(B,y,C,z).
\end{equation*}
This fact together with \eqref{eq:d'monotone} implies that
\begin{align*}
    d'(x,m(A,x,C,z)) \leq d'(x,m(A,x,B,y)) \vee d'(x,m(B,y,C,z)).
\end{align*}
Applying the ultrametric inequality to the last term above yields
\begin{align*}
    d'(x,m(A,x,C,z)) \leq d'(x,m(A,x,B,y)) \vee d'(x,y) \vee d'(y,m(B,y,C,z)),
\end{align*}
and then applying the ultrametric inequality to the middle term above yields
\begin{align*}
    d'(x,m(A,x,C,z)) \leq d'(x,m(A,x,B,y)) \vee d'(y,m(A,x,B,y)) \vee d'(y,m(B,y,C,z)).
\end{align*}
By the definition of $\rho$, the right-hand side is bounded by $\rho((A,x),(B,y)) \vee \rho((B,y),(C,z))$, and hence
\begin{align*}
    d'(x,m(A,x,C,z)) \leq \rho((A,x),(B,y)) \vee \rho((B,y),(C,z)).
\end{align*}

By symmetric arguments, we get that for all $\alpha\in \{d'(z,m(A,x,C,z))$, $d'(x,M(A,x,C,z))$, $d'(z,M(A,x,C,z))\}$,
\begin{align*}
    \alpha \leq \rho((A,x),(B,y)) \vee \rho((B,y),(C,z)).
\end{align*}
Thus, by definition of $\rho$, we have $\rho((A,x),(C,z))$ $\leq \rho((A,x),(B,y)) \vee \rho((B,y),(C,z))$.
\end{proof}

We are now ready to prove Theorem~\ref{thm:main2}. As in the proof of Theorem~\ref{thm:main}, we may assume that the metric space $X$ is finite whenever needed, and prove the quantitative version of the required result under this assumption. We will do this without mention in the proof.

\begin{proof}[Proof of Theorem~\ref{thm:main2}]
The equivalence $(ii) \iff (iii)$ is Theorem~\ref{thm:Assouad<1}. We will prove the equivalences of the remaining items in a cycle skipping $(ii)$.

$(i)\implies(iii)$ follows from Theorem~\ref{thm:TSPultrametricbiLipembedsintoR} and Fact~\ref{ultra=ND0}.

$(iii)\implies(iv)$ follows from Theorem~\ref{thm:La(X)ultrametric}, Fact~\ref{ultra=ND0}, and the fact that subsets of $\R$ are 2-$\TSP$-efficient.

$(iv)\implies(v)$ was proved in \cite{TV79}, see also \cite[\S8.1]{NaorRibe}.

$(v)\implies(vi)$ is \cite[Theorem~3.3]{LNP09}.

$(vi)\implies(vii)$ is \cite[Lemma~3.6 and Claim~3.7]{LNP09}.

For $(vii)\implies(i)$, assume that $\Lam(X)$ does not contain the binary trees with uniformly bounded biLipschitz distortion. Then by \cite{LPP96} (c.f. \cite[Lemma~6.1]{BMSZ21}), $X$ does not contain the paths with uniformly bounded biLipschitz distortion. Then by \cite[Theorem~7.2]{TW05}, $X$ is biLipschitz equivalent to $(Y,d^{1-\eps})$ for some metric space $(Y,d)$ and $\eps>0$. Since $\Lam(X)$ does not contain the binary trees with uniformly bounded biLipschitz distortion, it does not contain the Hamming cubes with uniformly bounded biLipschitz distortion, (cf., e.g., \cite{Bou86}), and thus $X$ is $\TSP$-efficient by Theorem~\ref{thm:main}. Hence, by Lemma~\ref{lem:TSPdoubling}, $X$ has Assouad dimension $\leq 1$. This implies that $(Y,d^{1-\eps})$ has Assouad dimension $\leq 1$, which implies that $(Y,d)$ has Assouad dimension $\leq 1-\eps$ (this is clear from the definition). Since Assouad dimension bounds Nagata dimension (\cite[Theorem~1.1]{LR15}) and since Nagata dimension is a nonnegative integer, this implies that $(Y,d)$ has Nagata dimension 0. Hence, $(Y,d^{1-\eps})$ also has Nagata dimension 0 (this is clear from the definitions, although one may also quote \cite[Theorem~1.2]{LS}), which implies that $X$ has Nagata dimension 0.
\end{proof}

\subsection*{Acknowledgment}
This work was partially supported by the Simons Foundation grant (award no. SFI-MPS-T-Institutes-00010825) and from State Treasury funds as part of a task commissioned by the Minister of Science and Higher Education under the project “Organization of the Simons Semesters at the Banach Center - New Energies in 2026-2028” (agreement no. MNiSW/2025/DAP/491). The authors would like to thank Sylvester Eriksson-Bique and Manisha Garg for their helpful conversations during the Simons Semester at the Banach Center at IMPAN, Warsaw, Poland, which resulted in the proof of Theorem~\ref{thm:Assouad<1}.

\end{document}